\newtheorem{thm}{Theorem}[section]
\newtheorem{cor}[thm]{Corollary}
\newtheorem{lem}[thm]{Lemma}
\newtheorem{prop}[thm]{Proposition}
\newtheorem{remark}[thm]{Remark}
\newtheorem{remarks}[thm]{Remarks}
\newtheorem{example}[thm]{Example}
\def\R {{\mathbb R}}
\def\N{{\mathbb N}}
\newcommand\Char{{\operatorname{Char}}}
\newcommand\trace{{\operatorname{trace}}}
\DeclareMathOperator{\M}{M}
\newcommand{\ben}{\begin{enumerate}}
\newcommand{\een}{\end{enumerate}}
\newcommand{\bit}{\begin{itemize}}
\newcommand{\eit}{\end{itemize}}
\newcommand\by{{\times}}
\DeclareMathOperator{\Cent}{Cent}
\renewcommand\H{{\mathbb{H}}}
\begin{document}
\title[]
{\large{Herstein's question about simple rings with involution
 }}
\author[]
{\small{Vered Moskowicz}}

\begin{abstract}
The aim of this paper is to try to answer Herstein's question concerning simple rings with involution, namely:
If $R$ is a simple ring with an involution of the first kind, with $dim_{Z(R)}R > 4$ and $\Char(Z(R))\neq 2$, is it true that $S^2=R$?
We shall see that in such a ring $R$, $R=S^3$.
We shall bring two possible criteria, each shows when $R=S^2$.
The first criterion: There exist $x,y \in S$ such that $xy-yx \neq 0$ and $xSy \subseteq S^2$ $\Leftrightarrow$ $S^2=R$.
The second criterion: There exist $x,y \in S$ such that $xy+yx \neq 0$ and $xKy \subseteq S^2$ $\Leftrightarrow$ $S^2=R$.
Actually, those results are true without any restriction on the dimension of $R$ over $Z(R)$.
In the special case of matrices (with the transpose involution and with the symplectic involution) over a field of characteristic not equal to $2$, it is not difficult to find, for example, $x,y \in S$ such that $xy-yx \neq 0$ and for every $s \in S$, $xsy \in S^2$. Therefore, proving Herstein's remark that for matrices the answer is known to be positive.
Similar results for $K^6$, $K^4$, $K+KSK$, $KS+K^2$, $SKS$ and $S^2K$ can also be found.

\end{abstract}

\maketitle

\section{Introduction}

Herstein has asked in his book \cite{her2} the following question:
Let $R$ be a simple ring with an involution of the first kind (namely, the center of $R$, $Z(R)$, is contained in the set of symmetric elements of $R$, $S$), with $dim_{Z(R)} > 4$ and $Z(R)$ is of characteristic not equal to $2$.
Is it true that $S^2=R$, where $S^2$ denotes the additive subgroup of $R$ generated by all $ab$ with $a,b \in S$?.
Herstein has remarked there, without a proof, that for matrices the answer is known to be positive.
Of course, if $R=S$ then $R=S^2$ (since $S \subseteq S^2$), so to avoid this trivial positive answer we will assume that $S \subsetneq R$.
Notice that if $S$ is a commutative set, then $S^2=S$. Therefore, if $S \subsetneq R$, then $S^2=S \subsetneq R$, a trivial negative answer.
As a first step towards an answer, we shall see that in such a ring $R$ (with $S$ not a commutative set) $R=S^3$, see Corollary \ref{simple S^3=R}.
As a second step, we shall see:
\bit
\item A first possible criterion for $R=S^2$: There exist $x,y \in S$ such that $xy-yx \neq 0$ and for every $s \in S$, $xsy \in S^2$ $\Leftrightarrow$ $S^2=R$, see Theorem \ref{first criterion}.
\item A second possible criterion for $R=S^2$: There exist $x,y \in S$ such that $xy+yx \neq 0$ and for every $k \in K$, $xky \in S^2$ $\Leftrightarrow$ $S^2=R$, see Theorem \ref{second criterion}.
\eit
Actually, those results are true without any restriction on the dimension of $R$ over $Z(R)$.
In the special case of matrices $\M_n(F)$ where $F$ is a field of characteristic not equal to $2$:
We shall suggest specific $x,y \in S$ such that $xy-yx \neq 0$ and for every $s \in S$, $xsy \in S^2$ for the transpose involution ($n \geq 2$) and for the symplectic involution ($2m=n \geq 4$), see Example \ref{section S^2: example matrices}.
Therefore, proving Herstein remark that for matrices the answer is positive.
So if one wishes to construct a negative answer to Herstein's question, one should construct a simple ring $R$ with an involution of the first kind with $dim_{Z(R)}R > 4$ and $Z(R)$ is of characteristic not equal to $2$ (and with $S \subsetneq R$), such that:
\bit
\item $S$ is a commutative set: Indeed, if $S$ is commutative, then Proposition \ref{S commutative}
shows that $S^2 \subsetneq R$. In fact, in this case Corollary \ref{cor of centS in Z} shows that $Z(R)=S$.
or
\item $S$ is not a commutative set and there exists an element $w \in S^3$ which is not in $S^2$ (since Corollary \ref{simple S^3=R} says that $S^3=R$. Even without Corollary \ref{simple S^3=R}, it is obvious that if there exists an element $w \in S^3$ which is not in $S^2$, then $S^2 \subsetneq S^3 \subseteq R$, so $S^2 \subsetneq R$).
\eit
Finally, similar results for other subsets, namely: $K^6$, $K^4$, $K+KSK$, $KS+K^2$, $SKS$ and $S^2K$ can be found in Theorem \ref{from Herstein 2.2 and 2.3}, Theorem \ref{R simple xy+yx neq 0 then R=K+KSK}, Theorem \ref{R simple two options for R=KS+K^2}, Corollary \ref{simple SKS=R} and Corollary \ref{simple S^2K=R}.

Some usual notations which will be used:
Let $R$ be any associative ring with $1$ (or any associative $F$-algebra with $1$, where $F$ is a field). $Z(R)$ will denote the center of $R$.
If $A,B$ are any subsets of $R$, then $A$ is a commutative set if for every $x,y \in A$, $xy=yx$.
$A$ is a skew-commutative set if for every $x,y \in A$, $xy=-yx$.
The centralizer of $A$, $Cent(A)$ is the set (one sees that it is actually a subring of $R$) of elements of $R$ which commute element-wise with the elements of $A$, $Cent(A)= \{r \in R| \forall a \in A ra=ar \}$.
$AB$ is the additive subgroup of $R$ generated by all products of the form $ab$ where $a \in A$ and $b \in B$.
($AB$ is the $F$-subspace of $R$ generated by all products of the form $ab$ where $a \in A$ and $b \in B$).
$A\circ B$ is the additive subgroup of $R$ generated by all products of the form $ab+ba$ where $a \in A$ and $B \in B$ ($\circ$ denotes the Jordan product $a\circ b=ab+ba$).
$\bar{A}$ is the subring ($F$-subalgebra) of $R$ generated by $A$.
If $R$ has an involution $*$, then $S$ will denote the set of symmetric elements of $R$ and $K$ will denote the set of skew-symmetric elements of $R$.
$T$ will denote the traces $T=\{r+r^* | r \in R \} \subseteq S$ and $T_0$ will denote the skew-traces $T_0=\{r-r^* | r \in R \} \subseteq K$.
In the matrix ring $\M_n(F)$, $e_{ij}$ will denote the $n \by n$ matrix having $1$ in the $ij$ component and $0$ otherwise.

We shall work in a context where $R$ will denote an associative $F$-algebra with $1$ and with an involution $*$ such that:
\bit
\item $F$ is a field of characteristic not equal to $2$. This implies that $R=S+K$, since $\forall r \in R$ $r=2^{-1}(r+r^*)+2^{-1}(r-r^*)$.
\item The involution $*$ is of the first kind, namely, $Z(R) \subseteq S$. This implies that $S \subseteq S^2 \subseteq S^3$ etc, since $1 \in Z(R) \subseteq S$.
\item $S \subsetneq R$. This assumption is made in order to dispose of the trivial case $S=R$, which implies $S^2 \supseteq S=R$, so Herstein's question has a positive (trivial) answer.
\eit
Notice that we have not made any assumptions on $dim_{Z(R)}R$ yet. Actually, we will not need this assumption. We shall say something about $dim_{Z(R)}R$ later on.

Next, we dispose of the following trivial case:
\begin{prop}\label{S commutative}
Let $R$ be an associative unital $F$-algebra with $\Char(F)\neq 2$. Let $*$ be an involution on $R$ of the first kind and $S \subsetneq R$. If $S$ is a commutative set, then $S^2\neq R$.
\end{prop}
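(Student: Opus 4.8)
The plan is to prove the stronger statement $S^2 = S$, from which the conclusion is immediate: since we are assuming $S \subsetneq R$, the equality $S^2 = S$ forces $S^2 = S \subsetneq R$, so in particular $S^2 \neq R$. Thus the whole proposition reduces to the single identity $S^2 = S$ under the commutativity hypothesis. One inclusion is free: because the involution is of the first kind we have $1 \in Z(R) \subseteq S$, so every $s \in S$ can be written $s = s \cdot 1$ with $s, 1 \in S$, giving $S \subseteq S^2$. So the real content is the reverse inclusion $S^2 \subseteq S$.

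For $S^2 \subseteq S$ I would argue on the additive generators of $S^2$, which by definition are the products $ab$ with $a, b \in S$. The key computation is to apply $*$ to such a product: since $*$ is an anti-automorphism and $a^* = a$, $b^* = b$, we get $(ab)^* = b^* a^* = ba$. Here is the only place the hypothesis enters: because $S$ is assumed to be a commutative set, $ba = ab$, and hence $(ab)^* = ab$, i.e. $ab \in S$. So every generating product of $S^2$ already lies in $S$.

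To pass from the generators to all of $S^2$, I would record the elementary fact that $S$ is an additive subgroup of $R$ (indeed an $F$-subspace): it is closed under addition since $(x + y)^* = x^* + y^* = x + y$, and closed under scalars since for $\lambda \in F \subseteq Z(R) \subseteq S$ one has $(\lambda s)^* = \lambda s^* = \lambda s$. Because $S^2$ is by definition the additive subgroup generated by the products $ab$, and we have just shown each such product lies in the additive subgroup $S$, it follows that $S^2 \subseteq S$. Combining this with the free inclusion gives $S^2 = S$, and the proposition follows.

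I do not expect any genuine obstacle here; this is precisely the trivial case flagged in the introduction. The only step requiring a word of care is the passage from the generating products to the whole additively (or linearly) generated set $S^2$, which rests on $S$ being closed under addition and scalars — a routine consequence of $*$ being an $F$-linear anti-automorphism of the first kind. The single substantive line is the identity $(ab)^* = b^* a^* = ba = ab$, where commutativity of $S$ is exactly what converts the \emph{anti}-symmetry of $*$ into the statement that $ab$ is symmetric.
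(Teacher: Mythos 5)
Your proposal is correct and follows essentially the same route as the paper: both reduce to showing $S^2=S$ via the computation $(ab)^*=b^*a^*=ba=ab$ together with $S\subseteq S^2$ from $1\in Z(R)\subseteq S$. The only cosmetic difference is that you verify the claim on additive generators and invoke closure of $S$ under addition, whereas the paper applies $*$ directly to a general sum $\sum a_ib_i$; these are the same argument.
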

\begin{proof}
By definition, $S^2=\{\sum_{1\leq i\leq n}a_ib_i |a_i,b_i \in S\}$.

Take $w \in S^2$. So, $w=\sum_{1\leq i\leq n}a_ib_i$ where $a_i,b_i \in S$. 
Then we have $w^*=(\sum a_ib_i)^*=\sum (a_ib_i)^*=\sum (b_i)^*(a_i)^*=\sum b_ia_i=\sum a_ib_i=w$, so $w \in S$, hence $S^2 \subseteq S$. 
But obviously, since $1 \in Z(R) \subseteq S$, $S \subseteq S^2$. 
Therefore, $S^2=S$.
Finally, using our assumption that $S$ is strictly contained in $R$, we get $S^2 \subsetneq R$.
\end{proof}

\section{Results about \texorpdfstring{$S^3, S^2$}{S3, S2}}
In view of Proposition \ref{S commutative}, if one wishes to find a positive answer to Herstein's question, one should assume that $S$ is not a commutative set. Hence, from now on we will usually deal with $S$ which is not a commutative set.

\subsection{Results about  \texorpdfstring{$S^3$}{S3}}
We give now two easy lemmas; both lemmas will be used in the proof of Theorem \ref{two sided in S^3}. 
Actually, in the proof of Theorem \ref{two sided in S^3} we will only use those two lemmas and not more.
Lemma \ref{first lemma (xy-yx)r in S^3} and Lemma \ref{second lemma r(xy-yx)t in S^3} do not say anything interesting if $S$ is a commutative set.
Similarly, when we consider $S^2$, Lemma \ref{first lemma xsy in S^2 then (xy-yx)r in S^2} and Lemma \ref{second lemma xsy in S^2 then r(xy-yx)t in S^2} do not say anything interesting if $S$ is a commutative set.
They are brought separately, each on its on right, and not just inside the proof of Theorem \ref{two sided in S^3}, because:
\bit
\item The first lemma (or more accurately, a special case of it) will be used directly in another result, namely (ii) of Proposition \ref{section S^3: xy-yx is invertible}.
\item The second lemma relies heavily on an idea of Herstein, more specifically, on the first part of the proof of Lemma 1.3 in \cite{her1}. We just change in Herstein's argument from $-$ to $+$, and add a $*$. This will be clear in the proof of the second lemma.
\eit

\begin{lem}\label{first lemma (xy-yx)r in S^3}
Let $R$ be an associative unital $F$-algebra with $\Char(F)\neq 2$. Let $*$ be an involution on $R$ of the first kind and $S \subsetneq R$. Let $x,y \in S$. Then for every $r \in R$, $(xy-yx)r \in S^3$. In other words, the right ideal $(xy-yx)R$ is contained in $S^3$.
\end{lem}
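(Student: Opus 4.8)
My plan is to use the decomposition $R = S + K$, which is available because $\Char(F) \neq 2$, and thereby reduce the claim to the two cases $r = s \in S$ and $r = k \in K$ handled separately. Throughout I will use freely that $S \subseteq S^2 \subseteq S^3$ (since $1 \in S$) and that $S^3$, being the additive $F$-subspace generated by triple products of symmetric elements, is closed under addition and under scaling by $\tfrac12$.

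The symmetric case is immediate. Writing $(xy-yx)s = x(ys) - y(xs)$ and noting that $ys$ and $xs$ are each products of two symmetric elements, so $ys, xs \in S^2$, I see that each term lies in $S\cdot S^2 \subseteq S^3$; hence $(xy-yx)s \in S^3$.

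The skew case is where I expect the only genuine obstacle. Put $c = xy-yx$; a one-line computation with $*$ gives $c^* = (xy)^* - (yx)^* = yx - xy = -c$, so $c \in K$, and thus both $c$ and $k$ are skew. The idea is to recover $ck$ from its Jordan part and its commutator part. For the Jordan part, $ck+kc$ is the product-sum of two skew elements, so $(ck+kc)^* = kc + ck$ shows $ck+kc \in S \subseteq S^2$. For the commutator part I will expand, via the Leibniz rule, $[c,k] = [xy,k] - [yx,k] = \big(x[y,k] + [x,k]y\big) - \big(y[x,k] + [y,k]x\big) = [x,[y,k]] + [[x,k],y]$. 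Since $[y,k] = yk-ky$ and $[x,k] = xk-kx$ are brackets of a symmetric and a skew element, they are symmetric; consequently each of $[x,[y,k]]$ and $[[x,k],y]$ is a difference of two products of symmetric elements, so $ck - kc = [c,k] \in S^2$. The whole argument hinges on this identity $ck - kc \in S^2$; everything else is bookkeeping.

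Combining the two pieces and invoking $\Char(F) \neq 2$ gives $ck = \tfrac12\big((ck+kc) + (ck-kc)\big) \in S^2 \subseteq S^3$. Adding the symmetric and skew contributions, for $r = s + k$ I obtain $(xy-yx)r \in S^3 + S^3 = S^3$, which is exactly the assertion that $(xy-yx)R \subseteq S^3$. I note in passing that this reasoning actually yields the sharper $(xy-yx)k \in S^2$, the extra room up to $S^3$ being consumed only by the symmetric part.
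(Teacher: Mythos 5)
Your proof is correct, but it is organized differently from the paper's. The paper keeps $r$ general and writes $(xy-yx)r^*=\alpha-\beta$ with $\beta=rxy+(rxy)^*\in S$ and $\alpha=rxy+xyr^*$, then decomposes $r=s+k$ only inside the verification that $\alpha\in S^3$ (where the identity $kxy-xyk=(kx-xk)y-x(yk-ky)$ plays the role of your Leibniz expansion), and at the end it must invoke $R=R^*$ to pass from $(xy-yx)r^*$ to $(xy-yx)r$. You instead split $r=s+k$ at the outset, which makes the symmetric contribution $xys-yxs$ visibly a member of $S^3$ and isolates the real content in the skew part, where you set $c=xy-yx\in K$ and split $ck$ into its Jordan part $ck+kc\in S$ and its Lie part $[c,k]=[x,[y,k]]+[[x,k],y]\in S^2$ (a Jacobi-type identity; both inner brackets lie in $S$ because a commutator of a symmetric element with a skew element is symmetric). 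Both arguments ultimately rest on the same two elementary facts --- that $[S,K]\subseteq S$ and that $K\circ K\subseteq S$ --- but your organization buys a sharper conclusion, $(xy-yx)K\subseteq S^2$, and makes transparent that only the symmetric part of $r$ forces the jump from $S^2$ to $S^3$; this is precisely the point the paper exploits later in Lemma \ref{first lemma xsy in S^2 then (xy-yx)r in S^2}, where the extra hypothesis $xSy\subseteq S^2$ is used to push the symmetric part down into $S^2$ as well. One small note: your appeals to scaling by $\tfrac12$ are harmless even if one reads $S^2$ and $S^3$ merely as additive subgroups, since $\tfrac12\in F\subseteq Z(R)\subseteq S$ is central and symmetric, so multiplying any generator by $\tfrac12$ produces another generator of the same set.
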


\begin{proof}
There are two options: \bit
\item $xy-yx=0$: Then, trivially, for every $r \in R$ $(xy-yx)r=0 \in S^3$.
\item $xy-yx\neq 0$: For every $r\in R$, $rxy+xyr^*=rxy+yxr^*-yxr^*+xyr^*$ (we just added and subtracted $yxr^*$).
Let $\alpha=\alpha(r)=rxy+xyr^*$, $\beta=\beta(r)=rxy+yxr^*$. We will just write $\alpha$ and $\beta$, instead of $\alpha(r)$ and $\beta(r)$. So for every $r \in R$, $\alpha=\beta+(xy-yx)r^*$.

Claim: $\alpha \in S^3$, $\beta \in S$.
Indeed, obviously $\beta \in S$, since $(\beta)^*=(rxy+yxr^*)*=yxr^*+rxy=\beta$.
As for $\alpha$: Of course, given $r \in R$, $r$ can be written as $r=s+k$ where $s \in S$ and $k \in K$. $\alpha=rxy+xyr^*=(s+k)xy+xy(s-k)=(sxy+xys)+(kxy-xyk)=(sxy+xys)+[(kx-xk)y-x(yk-ky)]$.
$sxy+xys \in S^3$ merely by definition of $S^3$ ($x,y,s \in S$). $(kx-xk)y-x(yk-ky) \in S^2$ since $kx-xk, y, x, yk-ky \in S$.

Using the claim we get: for every $r \in R$, $(xy-yx)r^*=\alpha-\beta \in S^3$ (remember that $S \subseteq S^2 \subseteq S^3$). But $R=R^*$ (since $r=(r^*)^*$), so for every $r \in R$, $(xy-yx)r \in S^3$.
(We could, of course, from the first place take $r^*xy+xyr=r^*xy+yxr-yxr+xyr$ instead of $rxy+xyr^*=rxy+yxr^*-yxr^*+xyr^*$, and get, without using $R=R^*$, that $(xy-yx)r \in S^3$).
Therefore, $(xy-yx)R \subseteq S^3$.

\eit
\end{proof}

If $S$ is a noncommutative set (in Lemma \ref{first lemma (xy-yx)r in S^3} we have not demanded that $S$ is a noncommutative set), then by definition exist $x,y \in S$ such that $xy-yx \neq 0$, therefore $S^3$ contains a non-zero right ideal of $R$, namely $(xy-yx)R$ where $x,y \in S$ with $xy-yx\neq 0$ (clearly, $0 \neq (xy-yx)1 \in (xy-yx)R$).

\begin{lem}\label{second lemma r(xy-yx)t in S^3}
Let $R$ be an associative unital $F$-algebra with $\Char(F)\neq 2$. Let $*$ be an involution on $R$ of the first kind and $S \subsetneq R$. Let $x,y \in S$. Then for every $r,u \in R$, $u(xy-yx)r \in S^3$. In other words, the two-sided ideal $R(xy-yx)R$ is contained in $S^3$.
\end{lem}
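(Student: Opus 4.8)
The plan is to deduce this from the first lemma together with the sign-calculus of the involution, treating the genuinely two-sided multiplication as the new content. Write $c = xy - yx$; since $x,y \in S$ we have $c^* = y^*x^* - x^*y^* = yx - xy = -c$, so $c \in K$. Lemma \ref{first lemma (xy-yx)r in S^3} already gives $cR \subseteq S^3$, and because $S^3$ is $*$-invariant (as $(s_1 s_2 s_3)^* = s_3 s_2 s_1 \in S^3$) and $R = R^*$, applying $*$ to $cr \in S^3$ yields $-r^* c = (cr)^* \in S^3$, hence also $Rc \subseteq S^3$. Thus left and right multiplication of $c$ by a single element are already under control; only the two-sided product $ucr$ remains.

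First I would isolate what is automatic. For any $u,r$, $(ucr)^* = r^* c^* u^* = -r^* c u^*$, so the symmetric part $\tfrac12(ucr - r^* c u^*) = \tfrac12(ucr + (ucr)^*)$ is a trace and lies in $T \subseteq S \subseteq S^3$. Since $\Char(F) \neq 2$ and $S^3$ is $*$-invariant, $ucr \in S^3$ if and only if its skew part is, i.e. if and only if the skew-trace $E := ucr + r^* c u^* = ucr - (ucr)^*$ lies in $S^3$. This locates the real content: the trace identities built only from the fact $c \in K$ reproduce exactly this symmetric-part information and never reach $E$, so the commutator structure of $c$ must genuinely be used.

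To attack $ucr$ (equivalently $E$) directly, I would decompose $u = s_1 + k_1$ and $r = s_2 + k_2$ with $s_i \in S$, $k_i \in K$, and expand $ucr$ into the four pieces $s_1 c s_2,\ s_1 c k_2,\ k_1 c s_2,\ k_1 c k_2$. Each piece is reorganized by the mechanism that drives Lemma \ref{first lemma (xy-yx)r in S^3}: there the key was $kxy - xyk = (kx-xk)y - x(yk-ky) \in S^2$, using that a skew commuted past a symmetric becomes symmetric. Here I would apply this sign-calculus — $[S,S]\subseteq K$, $[K,S]\subseteq S$, $S\circ S\subseteq S$, $S\circ K\subseteq K$, $K\circ K\subseteq S$ — on both sides of $c$ at once, following Herstein's Lemma 1.3 but with $-$ replaced by $+$ (we are symmetrizing rather than antisymmetrizing) and with a $*$ inserted to accommodate the second outer factor. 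Repeatedly peeling symmetric Jordan parts off the outer $s_i,k_i$ and pushing the leftover commutators inward should collapse each piece, modulo $S^3$, onto commutators of symmetric elements, which lie in $S^3$ by Lemma \ref{first lemma (xy-yx)r in S^3}.

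The main obstacle is the doubly-skew piece $k_1 c k_2$ — equivalently, in the $E$-formulation, the leftover symmetric term $s_1 c s_2 + s_2 c s_1 = s_1(xy-yx)s_2 + s_2(xy-yx)s_1$, in which neither outer factor is symmetric and the naive expansion uses four symmetric factors. A single trace completion only returns the same term with $u,r$ swapped and starred, so one pass does not suffice; the difficulty is to iterate the sign-calculus and, crucially, to exploit that $c$ is the \emph{commutator} $[x,y]$ rather than a generic skew element, so that after the reductions every surviving monomial is either a product of at most three symmetric elements or a commutator of symmetric elements covered by Lemma \ref{first lemma (xy-yx)r in S^3}. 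This is precisely the passage copied from the first part of Herstein's Lemma 1.3. Once all four pieces are shown to lie in $S^3$, we conclude $ucr \in S^3$ for all $u,r \in R$, that is, $R(xy-yx)R \subseteq S^3$.
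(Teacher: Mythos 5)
There is a genuine gap: your argument is a plan, not a proof, and it stalls exactly where you say it does. After the (correct) preliminary observations --- $c=xy-yx\in K$, $cR\subseteq S^3$ by Lemma \ref{first lemma (xy-yx)r in S^3}, $Rc\subseteq S^3$ by $*$-invariance, and the reduction of $ucr$ to its skew part --- you propose to split $u=s_1+k_1$, $r=s_2+k_2$ and handle four pieces by ``iterating the sign-calculus,'' but you never carry out a single one of these computations, and you explicitly concede that the piece $k_1ck_2$ (equivalently $s_1cs_2+s_2cs_1$) resists one pass of trace completion. Declaring that the surviving terms ``should collapse'' onto commutators covered by the first lemma, and that this ``is precisely the passage copied from Herstein's Lemma 1.3,'' is a promissory note, not an argument; as written, the two-sided case is not established.

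The missing idea is both simpler and more general than what you are reaching for. The paper proves the claim: for every $w\in S^3$ and $u\in R$, $wu+u^*w\in S^3$. By linearity it suffices to take $w=abc$ with $a,b,c\in S$, and then the single identity
\[
abcu+u^*abc \;=\; ab(cu+u^*c)+(u^*a+au)bc-a(bu^*+ub)c
\]
does the job, since $cu+u^*c$, $u^*a+au$, $bu^*+ub$ are all in $S$. Note that this step uses nothing about $c$ being a commutator --- contrary to your assertion that ``the commutator structure of $c$ must genuinely be used'' beyond the first lemma. Now take $w=(xy-yx)r\in S^3$ (first lemma); the claim gives $wu+u^*w\in S^3$, and $wu=(xy-yx)(ru)\in S^3$ again by the first lemma, so $u^*(xy-yx)r\in S^3$; since $R=R^*$ this yields $R(xy-yx)R\subseteq S^3$. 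Your decomposition of $u$ and $r$ into symmetric and skew parts is never needed, and avoiding it is what makes the problematic cross terms disappear.
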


\begin{proof}
There are two options: \bit
\item $xy-yx=0$: Then, trivially, for every $r,u \in R$ $u(xy-yx)r=0 \in S^3$.
\item $xy-yx\neq 0$:

Claim: for every $u \in R$ and $w \in S^3$, $wu+u^*w \in S^3$.
It is enough to show that for every $a,b,c \in S$, $abcu+u^*abc \in S^3$ (indeed, if for every $a_i,b_i,c_i \in S$, $a_ib_ic_iu+u^*a_ib_ic_i \in S^3$, then taking $S^3 \ni w=\sum a_ib_ic_i$, we get $wu+u^*w=(\sum a_ib_ic_i)u+u^*(\sum a_ib_ic_i)=\sum (a_ib_ic_iu+u^*a_ib_ic_i)$, so since each $a_ib_ic_iu+u^*a_ib_ic_i \in S^3$, we get $wu+u^*w=\ldots=\sum (a_ib_ic_iu+u^*a_ib_ic_i) \in S^3$).
Now, $abcu+u^*abc=ab(cu+u^*c)+(u^*a+au)bc-abu^*c-aubc=ab(cu+u^*c)+(u^*a+au)bc-a(bu^*+ub)c$. Clearly, $cu+u^*c, u^*a+au, bu^*+ub \in S$, concluding that $abcu+u^*abc=\ldots=ab(cu+u^*c)+(u^*a+au)bc-a(bu^*+ub)c \in S^3$.
So we have proved the claim that for every $u \in R$ and $w \in S^3$, $wu+u^*w \in S^3$.

Next, the above lemma, Lemma \ref{first lemma (xy-yx)r in S^3}, says that for every $r \in R$, $(xy-yx)r \in S^3$, so from the claim just seen (with $w=(xy-yx)r$), for every $u \in R$ and for every $r \in R$, $((xy-yx)r)u+u^*(xy-yx)r \in S^3$.
Use again the above lemma to get that $((xy-yx)r)u=(xy-yx)ru \in S^3$.
Therefore, for every $u \in R$ and for every $r \in R$, $u^*(xy-yx)r in S^3$. Obviously, since $R=R^*$ we get that
for every $u \in R$ and for every $r \in R$, $u(xy-yx)r in S^3$. So $R(xy-yx)R \subseteq S^3$.

\eit
\end{proof}

If $S$ is a noncommutative set (in Lemma \ref{second lemma r(xy-yx)t in S^3} we have not demanded that $S$ is a noncommutative set), then by definition exist $x,y \in S$ such that $xy-yx \neq 0$, therefore $S^3$ contains a non-zero two-sided ideal of $R$, namely $R(xy-yx)R$ where $x,y \in S$ with $xy-yx\neq 0$ (clearly, $0 neq 1(xy-yx)1 \in R(xy-yx)R$).
We proceed to our theorem.

\begin{thm}\label{two sided in S^3}
Let $R$ be an associative unital $F$-algebra with $\Char(F)\neq 2$. Let $*$ be an involution on $R$ of the first kind and $S \subsetneq R$. Assume $S$ is not a commutative set. Then there exists a non-zero two-sided ideal of $R$ which is contained in $S^3$.
\end{thm}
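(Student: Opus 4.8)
The plan is to reduce the statement directly to Lemma \ref{second lemma r(xy-yx)t in S^3}, which already carries essentially all the work. Since $S$ is assumed \emph{not} to be a commutative set, there must exist elements $x,y \in S$ with $xy-yx \neq 0$; this is precisely the definition of noncommutativity of $S$. First I would fix such a pair $x,y$ and single out the commutator $c = xy - yx$, which is by construction a nonzero element of $R$.

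Next I would form the two-sided ideal $R c R = R(xy-yx)R$ generated by $c$. By Lemma \ref{second lemma r(xy-yx)t in S^3}, for every $u,r \in R$ we have $u(xy-yx)r \in S^3$, and hence the additive span $R(xy-yx)R$ is contained in $S^3$. This is exactly the containment demanded by the theorem, so the only remaining point is nonvanishing.

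To see that this ideal is nonzero, take $u = r = 1$: then $1 \cdot (xy-yx) \cdot 1 = xy - yx = c \neq 0$, so $c \in R(xy-yx)R$ and the ideal is indeed nonzero. This completes the argument.

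I do not expect any genuine obstacle here, since all of the difficulty has already been absorbed into the two preceding lemmas. Lemma \ref{first lemma (xy-yx)r in S^3} secures that each right ideal $(xy-yx)R$ lies in $S^3$ via the add-and-subtract trick that produces $\alpha \in S^3$ and $\beta \in S$, and Lemma \ref{second lemma r(xy-yx)t in S^3} bootstraps this to a full two-sided ideal through the symmetrization identity $wu + u^*w \in S^3$ for $w \in S^3$. Given these, the theorem is immediate; the only new content to supply is the passage from the noncommutativity hypothesis to an explicit nonzero commutator, together with the trivial nonvanishing check above.
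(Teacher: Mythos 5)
Your proposal is correct and is essentially identical to the paper's own proof: the paper likewise deduces the theorem immediately from Lemma \ref{second lemma r(xy-yx)t in S^3}, picking $x,y\in S$ with $xy-yx\neq 0$ from the noncommutativity of $S$ and observing that $R(xy-yx)R\subseteq S^3$ is nonzero because $1(xy-yx)1=xy-yx\neq 0$. No gaps.
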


\begin{proof}
Follows immediately form Lemma \ref{second lemma r(xy-yx)t in S^3}, as was explained immediately after it.
\end{proof}

Theorem \ref{two sided in S^3} immediately implies the following:

\begin{cor}[First step towards an answer to Herstein's question]\label{simple S^3=R}
Let $R$ be a simple associative unital $F$-algebra with $\Char(F)\neq 2$. Let $*$ be an involution on $R$ of the first kind and $S \subsetneq R$. Assume $S$ is not a commutative set. Then $S^3=R$.
\end{cor}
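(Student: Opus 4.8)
The plan is to combine the structural statement of Theorem~\ref{two sided in S^3} with the defining property of a simple ring, so the argument should be very short. First I would invoke Theorem~\ref{two sided in S^3}: all of its hypotheses ($\Char(F)\neq 2$, the involution $*$ of the first kind, $S\subsetneq R$, and $S$ not a commutative set) are assumed here, so it yields a non-zero two-sided ideal $I$ of $R$ with $I\subseteq S^3$. Concretely one may take $I=R(xy-yx)R$ for any $x,y\in S$ with $xy-yx\neq 0$; such a pair exists precisely because $S$ is noncommutative, and Lemma~\ref{second lemma r(xy-yx)t in S^3} certifies $I\subseteq S^3$.

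Next I would use simplicity directly. By definition a simple ring has only the two two-sided ideals $\{0\}$ and $R$. Since $I$ is non-zero, we are forced to conclude $I=R$, whence $R=I\subseteq S^3$. The reverse inclusion $S^3\subseteq R$ is automatic (indeed $S^3$ is by construction an additive subgroup of $R$), so the two inclusions give $S^3=R$, as claimed.

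I do not expect any obstacle in this final step, since it is purely formal once Theorem~\ref{two sided in S^3} is in hand. The genuine work all sits upstream, in producing a two-sided ideal inside $S^3$ at all: given a single non-zero commutator $xy-yx$ with $x,y\in S$, Lemma~\ref{first lemma (xy-yx)r in S^3} pushes the whole right ideal $(xy-yx)R$ into $S^3$, and Lemma~\ref{second lemma r(xy-yx)t in S^3} promotes this to the two-sided ideal $R(xy-yx)R\subseteq S^3$. The noncommutativity hypothesis is what makes this engine run, and it is genuinely necessary: without it no non-zero commutator exists, the construction produces nothing, and in fact Proposition~\ref{S commutative} forces $S^2=S\subsetneq R$, so $S^3=R$ would fail.
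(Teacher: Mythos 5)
Your proposal is correct and follows exactly the paper's route: the paper likewise derives this corollary immediately from Theorem~\ref{two sided in S^3} (which supplies the non-zero two-sided ideal $R(xy-yx)R\subseteq S^3$ via Lemma~\ref{second lemma r(xy-yx)t in S^3}), and then simplicity forces that ideal to equal $R$. Your additional remarks on where the real work lies and on the necessity of noncommutativity are accurate but not needed for the proof itself.
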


\begin{proof}
Follows at once from Theorem \ref{two sided in S^3}.
\end{proof}

\begin{remark}
Let $R$ be as in Corollary \ref{simple S^3=R}.
\bit
\item If $dim_F(R)=n \infty$, then $n > dim_F(S)\geq n^{1/3}$: 

Indeed, $dim_F(R)= dim_F(S^3) \leq (dim_F(S))^3$ (the first equality is true since $R=S^3$). 
Therefore, $n \leq (dim_F(S))^3$, so $dim_F(S)\geq n^{1/3}$.
\item If $dim_F(R)=\infty$, then $dim_F(S)=\infty$: 

Otherwise, $dim_F(S)=m < \infty$. But then $\infty= dim_F(R)= dim_F(S^3) \leq (dim_F(S))^3=m^3$, a contradiction.
\eit
\end{remark}

\begin{example}[Major example: Matrices over a field]\label{section S^3: example matrices} 

Let $R=\M_n(F)$, where $F$ is a field of characteristic not equal to $2$ and $n>1$ (of course, if $n=1$ then $R=S$ and we have already disposed of that case at the beginning).
It is well known that $R$ is simple and that there exist two involutions on $R$: the transpose involution and the symplectic involution (remember that the symplectic involution only exists for $n=2m$).
See Rowen's book for a discussion about the two involutions on $\M_n(F)$ \cite[page 43]{rowen}.
Corollary \ref{simple S^3=R} shows that:
\bit
\item The transpose involution: For $n\geq 2$, $S^3=R$ (it is easily seen that $S$ is not a commutative set).
\item The symplectic involution: For $n=2m \geq 4$, $S^3=R$ (it is easily seen that $S$ is not a commutative set).
Notice that when $n=2$ one cannot use Corollary \ref{simple S^3=R}, since in that corollary $S$ is assumed to be a noncommutative set, but in $\M_2(F)$, $S\cong F$, so $S$ is a commutative set (moreover, $S$ is a field, not just a set). Clearly, in $\M_2(F)$, $S^3=S^2=S \cong F \subsetneq \M_2(F)$.
\eit
\end{example}

\subsubsection{$S^3=R$ without simplicity}

If one prefers to dismiss of the condition that $R$ is simple, but still have $S^3=R$, then we offer the following proposition.
It demands a stronger condition then noncommutativity of $S$ (for example, in (ii) we demand that there exist $x,y \in S$ such that $xy-yx$ is right invertible).

\begin{prop}\label{section S^3: xy-yx is invertible}
Let $R$ be an associative unital $F$-algebra with $\Char(F)\neq 2$. Let $*$ be an involution on $R$ of the first kind and $S \subsetneq R$. Assume $S$ is not a commutative set. Then, \bit
\item [(i)] If there exist $x,y \in S$ such that $xy-yx \neq 0$ and such that $1$ can be written as $\sum_{1 \leq i \leq m}u_i(xy-yx)v_i$ where $u_i,v_i \in R$, then $S^3=R$.
\item [(ii)] If a stronger condition then the condition in (i) is satisfied, namely, if there exist $x,y \in S$ such that $xy-yx$ is right invertible, then (in addition to $S^3=R$) each element of $R$ can be written in the form $\lambda+ \mu + \nu$, where $\lambda \in S$, $\mu =a_1b_1+a_2b_2$, $\nu=\tilde{a_1}\tilde{b_1}\tilde{c_1}+\tilde{a_2}\tilde{b_2}\tilde{c_2}$, with $a_1 ,b_1, a_2, b_2, \tilde{a_1}, \tilde{b_1}, \tilde{c_1}, \tilde{a_2}, \tilde{b_2}, \tilde{c_2} \in S$.
\eit
\end{prop}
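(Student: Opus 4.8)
The plan is to derive part (i) directly from Lemma \ref{second lemma r(xy-yx)t in S^3}, and then to obtain the sharper, bounded-term statement in part (ii) by tracking the number of products in the decomposition already present (up to sign conventions) in the proof of Lemma \ref{first lemma (xy-yx)r in S^3}.

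For part (i), the observation is that the hypothesis says exactly that $1$ lies in $R(xy-yx)R$, the two-sided ideal generated by $xy-yx$. By Lemma \ref{second lemma r(xy-yx)t in S^3} this ideal is contained in $S^3$. Since it contains $1$, it equals $R$, and therefore $R = R(xy-yx)R \subseteq S^3 \subseteq R$, which forces $S^3 = R$. (Equivalently, for an arbitrary $r \in R$ one writes $r = 1 \cdot r = \sum_i u_i(xy-yx)(v_i r)$ and invokes the lemma summand by summand.)

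For part (ii), right invertibility of $xy-yx$ furnishes $c \in R$ with $(xy-yx)c = 1$; this is the hypothesis of (i) with $m = 1$, so $S^3 = R$ already follows. To produce the refined expression, I would take an arbitrary $w \in R$, write $w = (xy-yx)(cw)$, and set $r := cw$, so that the task reduces to decomposing $(xy-yx)r$ with a controlled number of terms. Following the alternative computation flagged at the end of the proof of Lemma \ref{first lemma (xy-yx)r in S^3}, I set $\alpha = r^*xy + xyr$ and $\beta = r^*xy + yxr$, so that $(xy-yx)r = \alpha - \beta$ and a one-line involution check gives $\beta \in S$. Writing $r = s + k$ with $s \in S$ and $k \in K$ and expanding yields $\alpha = (sxy + xys) + (xyk - kxy)$, and the identity $xyk - kxy = x(yk-ky) - (kx-xk)y$ rewrites the second bracket as a sum of just two elements of $S^2$, since $yk-ky$ and $kx-xk$ are symmetric.

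Collecting the pieces, I would read off $\lambda = -\beta \in S$, $\mu = x(yk-ky) + (xk-kx)y$ (a sum of two products of symmetric elements), and $\nu = sxy + xys$ (a sum of two products of three symmetric elements each), giving $w = \lambda + \mu + \nu$ in precisely the asserted shape. The only points needing care are the short involution computations showing that $\beta$, $yk-ky$, and $kx-xk$ are symmetric, together with the verification of the rearrangement identity; none of these is a genuine obstacle, so the real content of the argument is simply the careful bookkeeping that keeps $\mu$ to two products and $\nu$ to two triple products.
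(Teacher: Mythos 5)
Your proof is correct and follows essentially the same route as the paper: part (i) via Lemma \ref{second lemma r(xy-yx)t in S^3} applied to the ideal $R(xy-yx)R$, and part (ii) via the $\alpha,\beta$ decomposition from the proof of Lemma \ref{first lemma (xy-yx)r in S^3}. Your only deviation is using the variant $\alpha=r^*xy+xyr$ (which the paper itself flags as an equivalent alternative) so that you decompose $(xy-yx)r$ directly instead of $(xy-yx)r^*$, thereby avoiding the paper's substitution $r\mapsto(zr)^*$; the resulting term counts are identical.
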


\begin{remarks}
\bit
\item Notice that we could drop the condition "Assume $S$ is not a commutative set", since afterwards in (i) and in (ii) we already demand that there exist $x,y \in S$ such that $xy-yx \neq 0$.
\item Of course (ii) is stronger then (i): Let $z \in R$ such that $(xy-yx)z=1$. Then (i) is indeed satisfied by taking, for example, $u_1=1, v_1=z$.
\item (ii) of this proposition says that there is an upper bound on the number of monomials of length $3$ in the representation of every element $r \in R$, namely: $5$ monomials at most are needed.
(We consider $a \in S$ as having length $3$, for example $a=a11$. Similarly $ab$ is of length $3$, $ab=ab1$ where $a,b \in S$).
\item For the proof of (ii) only the first lemma, Lemma \ref{first lemma (xy-yx)r in S^3} is needed.
However, for the proof (i) the second lemma, Lemma \ref{second lemma r(xy-yx)t in S^3} is needed.
\eit
\end{remarks}

\begin{proof}
\bit
\item [(i)] Write $1=\sum_{1 \leq i \leq m}u_i(xy-yx)v_i$ where $u_i,v_i \in R$ and $x,y \in S$ such that $xy-yx \neq 0$. This of course implies that the non-zero two-sided ideal $R(xy-yx)R$ equals $R$. But from Lemma \ref{second lemma r(xy-yx)t in S^3}, $R(xy-yx)R$ is contained in $S^3$. Therefore, $S^3=R$.
\item [(ii)] By assumption there exists $x,y \in S$ such that $xy-yx$ is right invertible, so there exists $z \in R$ such that $(xy-yx)z=1$.
For every $r\in R$, $rxy+xyr^*=rxy+yxr^*-yxr^*+xyr^*$. Let $\alpha=\alpha(r)=rxy+xyr^*$, $\beta=\beta(r)=rxy+yxr^*$.  So for every $r \in R$, $(xy-yx)r^*=\alpha(r)-\beta(r)$.
We have seen in the proof of Lemma \ref{first lemma (xy-yx)r in S^3} that $\beta(r) \in S$ and $\alpha(r) \in S^3$.
More elaborately for $\alpha(r)$, we have seen that if $r=s+k$ with $s \in S$ and $k \in K$:
$\alpha(r)=rxy+xyr^*=\ldots=(sxy+xys)+[(kx-xk)y-x(yk-ky)]$.

So denote: $\lambda(r)=-\beta(r)=-(rxy+yxr^*)$, $\mu(r)=(kx-xk)y-x(yk-ky)$ and $/nu(r)=sxy+xys$.
Hence, $(xy-yx)r^*=-\beta(r)+\alpha(r)=\lambda(r)+\mu(r)+\nu(r)$.
Define $a_1(r)=kx-xk, b_1(r)=y, a_2(r)=-x, b_2(r)=yk-ky, \tilde{a_1}(r)=s, \tilde{b_1}(r)=x, \tilde{c_1}(r)=y, \tilde{a_2}(r)=x, \tilde{b_2}(r)=y, \tilde{c_2}(r)=s$.
Therefore, $(xy-yx)r^*=-\beta(r)+\alpha(r)=\lambda(r)+\mu(r)+\nu(r)=\lambda(r)+a_1(r)b_1(r)+a_2(r)b_2(r)+ \tilde{a_1}(r)\tilde{b_1}(r)\tilde{c_1}(r)+\tilde{a_2}(r)\tilde{b_2}(r)\tilde{c_2}(r)$.

Now, for $r \in R$, $r=1r=((xy-yx)z)r=(xy-yx)(zr)$ (here we use the right invertibility of $xy-yx$).

In our notations,

$r=(xy-yx)(zr)=-\beta((zr)^*)+\alpha((zr)^*)=\lambda((zr)^*)+\mu((zr)^*)+\nu((zr)^*)=
\lambda(zr)^*+a_1((zr)^*)b_1((zr)^*)+a_2((zr)^*)b_2((zr)^*)+\tilde{a_1}((zr)^*)\tilde{b_1}((zr)^*)\tilde{c_1}((zr)^*)+
\tilde{a_2}((zr)^*)\tilde{b_2}((zr)^*)\tilde{c_2}((zr)^*)$.

So each element $r \in R$ can be written in the requested form.
\eit
\end{proof}

Notice that Corollary \ref{simple S^3=R} just says that in such a simple ring, each element $r \in R$ is in $S^3$, hence can be written in the form $r=\sum_{1\leq i \leq m}a_ib_ic_i$ with $a_i,b_i,c_i \in S$.
However, it does not tell if there is any bound on $m$.
But if $xy-yx$ is (not just non-zero, but also) right invertible, then there is a bound, namely $5$, as (ii) of Proposition \ref{section S^3: xy-yx is invertible} shows.
However, there are cases where a better bound then $5$ can be given, for example, if $R=\M_2(F)$ with the transpose involution, then $2$ is a bound.
This can be seen, for example, as follows: Let $R \ni r=ae_11+be_{12}+ce_{21}+de_{22}$.

$r=(1/2)(r+r^*)+(1/2)(r-r^*)=
[ae_{11}+(1/2)(b+c)e_{12}+(1/2)(b+c)e_{21}+de_{22}]+[0e_{11}+(1/2)(b-c)e_{12}+(1/2)(c-b)e_{21}+0e_{22}]$.

The first term, $[ae_{11}+(1/2)(b+c)e_{12}+(1/2)(b+c)e_{21}+de_{22}]$ is in $S$.
The second term, $[0e_{11}+(1/2)(b-c)e_{12}+(1/2)(c-b)e_{21}+0e_{22}]$ is a product of two elements of $S$: the diagonal matrix $(1/2)(b-c)e_{11}+(1/2)(c-b)e_{22}$ and $e_{12}+e_{21}$.

\begin{example}\label{section S^3 without simple: example matrices}

Let $R=\M_n(F)$, where $F$ is a field of characteristic not equal to $2$, $n>1$.
(ii) of Proposition \ref{section S^3: xy-yx is invertible} shows that:
\bit
\item The transpose involution: Let $n=2l$. Notice that, in contrast to Example \ref{section S^3: example matrices}, we now demand that $n$ will be even.
Then each element of $R$ can be written in the form $\lambda+ \mu + \nu$,
where $\lambda \in S$, $\mu =a_1b_1+a_2b_2$, $\nu=\tilde{a_1}\tilde{b_1}\tilde{c_1}+\tilde{a_2}\tilde{b_2}\tilde{c_2}$,
with $a_1 ,b_1, a_2, b_2, \tilde{a_1}, \tilde{b_1}, \tilde{c_1}, \tilde{a_2}, \tilde{b_2}, \tilde{c_2} \in S$
(By $S$ we mean the symmetric elements wrt the transpose involution).
Indeed, one can take $x=(e_{11}-e_{22})+(e_{33}-e_{44})+\ldots+(e_{n-1,n-1}-e_{nn})$, $y=(e_{12}+e_{21})+(e_{34}+e_{43})+\ldots+(e_{n-1,n}+e_{n,n-1})$. Obviously, $x,y \in S$.
A computation shows that $xy=(e_{12}-e_{21})+(e_{34}-e_{43})+\ldots+(e_{n-1,n}-e_{n,n-1})$ and $yx=-xy$, so $xy-yx=2xy=2[(e_{12}-e_{21})+(e_{34}-e_{43})+\ldots+(e_{n-1,n}-e_{n,n-1})]$.
One sees that $xy-yx$ is invertible, so we can apply (ii) of Proposition \ref{section S^3: xy-yx is invertible}.

A word of caution: If, for example, $n=3$, then do not exist $x,y \in S$ such that $xy-yx$ is invertible.
Reason: $xy-yx \in K$, so $xy-yx=ae_{12}-ae_{21}+be_{13}-be_{31}+ce_{23}-ce_{32}$ for some $a,b,c \in F$.
A direct computation shows that the determinant of $xy-yx$ is $0$ (for example, working in the first column yields: $a(bc)-b(ac)$ which is, of course, $0$).

This just shows that one cannot use (ii) of Proposition \ref{section S^3: xy-yx is invertible} to give an upper bound for the number of summands $a_ib_ic_i$ ($a_i,b_i,c_i \in S$) in the representation of $S^3=R \ni r=\sum_{1 \leq i \leq m(r)} a_ib_ic_i$ ($m(r)$ depends on $r$).
The reader is invited to check and see what happens if $n \in \{5,7,9,\ldots\}$.
\item The symplectic involution: Let $n=2m$, with $m=2l$. Notice that, in contrast to Example \ref{section S^3: example matrices}, we now demand that $m$ will be even.
Then each element of $R$ can be written in the form $\lambda+ \mu + \nu$,
where $\lambda \in S$, $\mu =a_1b_1+a_2b_2$, $\nu=\tilde{a_1}\tilde{b_1}\tilde{c_1}+\tilde{a_2}\tilde{b_2}\tilde{c_2}$, with $a_1 ,b_1, a_2, b_2, \tilde{a_1}, \tilde{b_1}, \tilde{c_1}, \tilde{a_2}, \tilde{b_2}, \tilde{c_2} \in S$
(By $S$ we mean the symmetric elements wrt the symplectic involution).
Indeed, one can take $x$ as the blocks matrix with two equal blocks, each of the two blocks is an $m \by m$ ($m=2l$) matrix of the form: $(e_{11}-e_{22})+(e_{33}-e_{44})+\ldots+(e_{m-1,m-1}-e_{mm})$.
And one can take $y$ as the blocks matrix with two equal blocks, each of the two blocks is an $m \by m$ ($m=2l$) matrix of the form: $(e_{12}+e_{21})+(e_{34}+e_{43})+\ldots+(e_{m-1,m}+e_{m,m-1})$. Obviously, $x,y \in S$.
A computation shows that $xy=(e_{12}-e_{21})+(e_{34}-e_{43})+\ldots+(e_{n-1,n}-e_{n,n-1})$ and $yx=-xy$, so $xy-yx=2xy=2[(e_{12}-e_{21})+(e_{34}-e_{43})+\ldots+(e_{n-1,n}-e_{n,n-1})]$.
One sees that $xy-yx$ is invertible, so we can apply (ii) of Proposition \ref{section S^3: xy-yx is invertible}.
The reader is invited to check and see what happens if $m \in \{5,7,9,\ldots\}$.
\eit
\end{example}


\subsubsection{A remark about the dimension of $R$}

For the next few results we will consider only a simple ring $R$.
We would like to remark about the dimension of a simple $R$ over $Z(R)$ (remember that the center of a simple ring is a field).

\begin{thm}[A theorem of Herstein]\label{centS in Z} 
Let $R$ be a simple associative unital $F$-algebra ($F$ is of any characteristic) with an involution $*$ (not necessarily of the first kind) and assume $dim_{Z(R)}R > 4$.
Then $\Cent(S)\subseteq Z(R)$ (therefore, $\Cent(S)=Z(R)$).
\end{thm}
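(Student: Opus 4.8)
The plan is to prove the nontrivial inclusion $\Cent(S) \subseteq Z(R)$; the reverse inclusion $Z(R) \subseteq \Cent(S)$ is immediate, since central elements commute with everything, and that is what gives the parenthetical equality $\Cent(S)=Z(R)$. Write $C=\Cent(S)$, which is a subring of $R$. My first step is to record that $C$ is $*$-invariant: if $c \in C$ and $s \in S$, then applying $*$ to $cs=sc$ yields $s c^*=c^* s$ (using $s^*=s$), so $c^* \in C$. In the case $\Char(F)\neq 2$ this lets me split any $c \in C$ into a symmetric and a skew part, both lying in $C$.

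The heart of the argument is to show that $C$ is a Lie ideal of $R$, i.e. $[C,R]\subseteq C$. For $c \in C$, $s \in S$ and $k \in K$ one checks that $sk-ks \in S$ (apply $*$ and use $s^*=s$, $k^*=-k$). Hence $c$ commutes both with $s$ and with $sk-ks$; expanding the identity $c(sk-ks)=(sk-ks)c$ and repeatedly substituting $cs=sc$ collapses it to $[s,\,ck-kc]=0$. Thus $[c,k]\in \Cent(S)=C$ for every $k \in K$. Since also $[c,s]=0\in C$ and (when $\Char(F)\neq 2$) $R=S+K$, I obtain $[c,r]\in C$ for all $r\in R$, so $C$ is a Lie ideal of $R$ which happens to be a subring.

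Now I would invoke Herstein's structure theory of Lie ideals of a simple ring: when $R$ is simple with $\dim_{Z(R)}R>4$, any Lie ideal of $R$ is either contained in $Z(R)$ or contains $[R,R]$. If $C$ were not central it would contain $[R,R]$; since $C$ centralizes $S$, this would force $S$ to centralize $[R,R]$, and together with simplicity that pushes $S$ into $Z(R)$, which in turn bounds $\dim_{Z(R)}R$ by $4$ and contradicts the hypothesis. Therefore $C\subseteq Z(R)$, giving $\Cent(S)=Z(R)$.

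The main obstacle I anticipate is twofold. First, the classification of Lie ideals of simple rings that I am leaning on is itself a substantial theorem, and it is precisely there that the bound $\dim_{Z(R)}R>4$ becomes unavoidable: it excludes the four-dimensional quaternion algebra with its standard involution, where $S=Z(R)$ and hence $\Cent(S)=R\neq Z(R)$. Second, the statement is asserted for $F$ of arbitrary characteristic, whereas the decomposition $R=S+K$ used above fails when $\Char(F)=2$ (there $K\subseteq S$). In that case the Lie-ideal computation must be rephrased in terms of the traces $r+r^*$ rather than the symmetric/skew splitting, and the exclusion of the low-dimensional exceptional algebras requires the more delicate characteristic-$2$ analysis in Herstein's treatment; this is the part I expect to demand the most care.
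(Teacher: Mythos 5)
The paper does not actually prove this statement: its ``proof'' is the single line ``This is Theorem 1.7 in Herstein's book,'' so there is no argument in the paper to compare yours against, only Herstein's original one. Your reconstruction follows what is essentially the standard route for $\Char(F)\neq 2$, and its computational core is correct: $\Cent(S)$ is a $*$-invariant subring, $sk-ks\in S$ for $s\in S$, $k\in K$, and expanding $c(sk-ks)=(sk-ks)c$ with $cs=sc$ does collapse to $[s,ck-kc]=0$, so $[c,K]\subseteq\Cent(S)$ and hence $[\Cent(S),R]\subseteq\Cent(S)$ via $R=S+K$. What you should be clear-eyed about is that the remainder of your argument is a reduction, not a proof: you invoke (a) Herstein's dichotomy for Lie ideals of simple rings, (b) the fact that $\Cent([R,R])=Z(R)$, and (c) the theorem that $S\subseteq Z(R)$ forces $\dim_{Z(R)}R\le 4$. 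These are themselves results of the same depth as the target statement (they occupy the lemmas preceding Theorem 1.7 in Herstein's book), so a referee would regard your write-up as a correct proof outline conditional on that machinery rather than a self-contained proof. Two further points of precision. First, for $\Char(F)\neq 2$ the Lie-ideal dichotomy needs no dimension hypothesis; the bound $\dim_{Z(R)}R>4$ is really consumed in step (c), which is where the quaternion example $S=Z(\H)$, $\Cent(S)=\H$ is excluded --- your text slightly misplaces where that hypothesis bites. Second, the statement is asserted for arbitrary characteristic, and your proof genuinely does not cover $\Char(F)=2$ (where $R=S+K$ fails and $K\subseteq S$); you acknowledge this but do not repair it, so as a proof of the theorem as stated there is a real gap there, even though every application of the theorem in this paper assumes $\Char(F)\neq 2$.
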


Notice that if $S=R$, then the theorem is trivial, since $\Cent(S)=\Cent(R)=Z(R)$. Anyway, remember that at the beginning we have mentioned that we shall be interested in $S \subsetneq R$ (since if $S=R$, then $S^2\supseteq S=R$).

\begin{proof}
This is Theorem 1.7 in Herstein's book \cite{her1}.
\end{proof}

As a corollary to the above theorem we have:

\begin{cor}\label{cor of centS in Z}
Let $R$ be a simple associative unital $F$-algebra with $\Char(F)\neq 2$. Let $*$ be an involution on $R$ of the first kind and $S \subsetneq R$. Also assume that $dim_{Z(R)}R > 4$.
Then, $S$ is a commutative set $\Leftrightarrow$ $Z(R)=S$.
\end{cor}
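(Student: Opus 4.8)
The plan is to prove the two implications separately, with the forward direction being the only one requiring any real content, and that content coming entirely from the previously stated theorem of Herstein (Theorem \ref{centS in Z}).

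For the easy direction, I would assume $Z(R)=S$ and observe that the center of any ring is a commutative set; hence $S=Z(R)$ is commutative. This needs neither the dimension hypothesis nor the assumption that $*$ be of the first kind, so it is disposed of in one line.

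For the substantive direction, I would assume $S$ is a commutative set and aim to show $S\subseteq Z(R)$, since the reverse inclusion $Z(R)\subseteq S$ is automatic from the assumption that $*$ is of the first kind. The key observation is that ``$S$ commutative'' is exactly the statement that every element of $S$ commutes element-wise with every element of $S$, i.e.\ $S\subseteq\Cent(S)$. Now I would invoke Herstein's theorem: since $R$ is simple with an involution and $\dim_{Z(R)}R>4$, Theorem \ref{centS in Z} gives $\Cent(S)=Z(R)$. Combining these yields $S\subseteq\Cent(S)=Z(R)$, and together with $Z(R)\subseteq S$ this gives $Z(R)=S$, as desired.

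I do not expect a genuine obstacle here: the entire argument reduces to recognizing the set-theoretic reformulation $S\subseteq\Cent(S)$ of commutativity and then feeding it into Theorem \ref{centS in Z}. The only point worth flagging is that the hypothesis $\dim_{Z(R)}R>4$ is not cosmetic but is precisely the hypothesis under which Herstein's containment $\Cent(S)\subseteq Z(R)$ holds; without it the equality $\Cent(S)=Z(R)$ can fail, and the corollary's forward implication would lose its engine. I would therefore present the proof as a short deduction and make explicit at the relevant step that it is this dimension bound, inherited from Theorem \ref{centS in Z}, that makes $\Cent(S)=Z(R)$ available.
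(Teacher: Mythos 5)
Your proof is correct and follows exactly the same route as the paper: the easy direction from commutativity of the center, and the forward direction via $S\subseteq\Cent(S)\subseteq Z(R)\subseteq S$, using Herstein's Theorem \ref{centS in Z} for the middle inclusion and the first-kind hypothesis for the last. No gaps; your remark that the dimension bound is exactly what powers Herstein's containment is apt.
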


Notice that apriori, if $Z(R) \subsetneq S$, then $S$ could be commutative or not, but this corollary says that $S$ must be noncommutative.

\begin{proof}
Of course if $S=Z(R)$ then $S$ is commutative (moreover, $S$ is not just a commutative set, but a field).
As for the other direction, assume that $S$ is a commutative set. Then $S \subseteq \Cent(S)$. Now use Herstein's theorem \ref{centS in Z}, so $S \subseteq \Cent(S) \subseteq Z(R)$. Finally, since $*$ is of the first kind (namely $Z(R) \subseteq S$), we get $S \subseteq \Cent(S) \subseteq Z(R) \subseteq S$. Therefore, $S=Z(R)$.
\end{proof}

In view of this, one has:

\begin{thm}\label{simple with dim > 4}
Let $R$ be a simple associative unital $F$-algebra with $\Char(F)\neq 2$. Let $*$ be an involution on $R$ of the first kind such that $Z(R) \subsetneq S$ (and not just $Z(R) \subseteq S$) and $S \subsetneq R$. Assume that $dim_{Z(R)}R > 4$. Then $S^3=R$.
\end{thm}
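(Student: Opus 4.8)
The plan is to reduce the statement to the already-established Corollary \ref{simple S^3=R}, which asserts that a simple ring $R$ (with $\Char(F)\neq 2$, involution of the first kind, and $S\subsetneq R$) satisfies $S^3=R$ whenever $S$ is not a commutative set. All the hypotheses of that corollary except the noncommutativity of $S$ are already part of the hypotheses of the present theorem, so the entire content of the proof will be to verify that, under the additional assumptions $Z(R)\subsetneq S$ and $\dim_{Z(R)}R>4$, the set $S$ fails to be commutative.

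To establish noncommutativity of $S$, I would invoke Corollary \ref{cor of centS in Z}. That corollary requires exactly the data we have at hand: $R$ simple, $\Char(F)\neq 2$, $*$ of the first kind, $S\subsetneq R$, and $\dim_{Z(R)}R>4$. Under these conditions it states the equivalence ``$S$ is a commutative set $\Leftrightarrow$ $Z(R)=S$.'' Since the theorem assumes the \emph{strict} inclusion $Z(R)\subsetneq S$, we have $Z(R)\neq S$, and so the right-hand side of the equivalence fails. By the contrapositive direction of Corollary \ref{cor of centS in Z}, this forces $S$ not to be a commutative set.

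With noncommutativity of $S$ in hand, every hypothesis of Corollary \ref{simple S^3=R} is met, and I would conclude $S^3=R$ directly. Thus the argument is purely a combination of two earlier results: the dimension hypothesis $\dim_{Z(R)}R>4$ enters only through Corollary \ref{cor of centS in Z} (and, behind it, Herstein's Theorem \ref{centS in Z}, which supplies $\Cent(S)\subseteq Z(R)$), while the simplicity and the structural passage from noncommutativity to $S^3=R$ are carried entirely by Corollary \ref{simple S^3=R}.

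I do not expect a genuine obstacle in this proof, since it is a formal assembly of prior statements; the only subtlety worth flagging is that the hypothesis must be the strict inclusion $Z(R)\subsetneq S$ rather than merely $Z(R)\subseteq S$, because it is precisely strictness that rules out the degenerate case $S=Z(R)$ in which $S$ would be commutative (indeed a field) and the conclusion would fail. The real analytic weight of the result is hidden inside Herstein's Theorem \ref{centS in Z}, which we are treating as a black box; the dimension bound $\dim_{Z(R)}R>4$ is needed solely to make that theorem applicable.
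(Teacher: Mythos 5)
Your proof is correct and follows exactly the paper's own argument: the strict inclusion $Z(R)\subsetneq S$ combined with Corollary \ref{cor of centS in Z} forces $S$ to be noncommutative, and then Corollary \ref{simple S^3=R} yields $S^3=R$. Nothing is missing.
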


\begin{proof}
By assumption, $Z(R) \subsetneq S$, so from Corollary \ref{cor of centS in Z} we get that $S$ is noncommutative.
Now Corollary \ref{simple S^3=R} implies that $S^3=R$.
\end{proof}

Notice that one can apply Corollary \ref{simple S^3=R} to $\M_2(F)$ with the transpose involution, where $\Char(F)\neq 2$ (there is no restriction on the dimention of $R$ over $Z(R)$), while Theorem \ref{simple with dim > 4} cannot be applied, since $dim_{Z(\M_2(F))}\M_2(F) = 4$.


\subsection{Results about  \texorpdfstring{$S^2$}{S2}}

However, Herstein's question dealt with $S^2$ and not $S^3$.
Therefore, we shall bring two possible criteria, each shows when $R=S^2$ ($R$ is a simple associative unital $F$-algebra with $\Char(F)\neq 2$. $*$ is an involution on $R$ of the first kind and $S \subsetneq R$. $S$ is not a commutative set).
The first criterion says: There exist $x,y \in S$ such that $xy-yx \neq 0$ and for every $s \in S$, $xsy \in S^2$ if and only if $S^2=R$, see Theorem \ref{first criterion}.
The second criterion says: There exist $x,y \in S$ such that $xy+yx \neq 0$ and for every $k \in K$, $xky \in S^2$ if and only if $S^2=R$, see Theorem \ref{second criterion}.
Those results are true without any restriction on the dimension of $R$ over $Z(R)$.

\subsubsection{First criterion}

We start with two lemmas, similar to Lemma \ref{first lemma (xy-yx)r in S^3} and Lemma \ref{second lemma r(xy-yx)t in S^3}.

\begin{lem}\label{first lemma xsy in S^2 then (xy-yx)r in S^2}
Let $R$ be an associative unital $F$-algebra with $\Char(F)\neq 2$. Let $*$ be an involution on $R$ of the first kind and $S \subsetneq R$. Let $x,y \in S$ such that for every $s \in S$, $xsy \in S^2$.
Then for every $r \in R$, $(xy-yx)r \in S^2$. In other words, the right ideal $(xy-yx)R$ is contained in $S^2$.
\end{lem}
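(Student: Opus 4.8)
The plan is to mirror the proof of Lemma~\ref{first lemma (xy-yx)r in S^3} as closely as possible, reusing the elements $\alpha=\alpha(r)=rxy+xyr^*$ and $\beta=\beta(r)=rxy+yxr^*$ introduced there, which satisfy $(xy-yx)r^*=\alpha-\beta$ with $\beta\in S$. Since $S\subseteq S^2$ and, by $R=R^*$ together with the $*$-invariance of $S^2$ (note $(ab)^*=ba$ for $a,b\in S$), it suffices to prove $\alpha\in S^2$ for every $r\in R$. Writing $r=s+k$ with $s\in S$, $k\in K$, the same computation as in Lemma~\ref{first lemma (xy-yx)r in S^3} gives $\alpha=(sxy+xys)+(kxy-xyk)$.

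First I would dispose of the $k$-part exactly as before, and observe that it requires no new hypothesis: $kxy-xyk=(kx-xk)y-x(yk-ky)$, and since $(kx-xk)^*=kx-xk$ and $(yk-ky)^*=yk-ky$ are symmetric, this expression already lies in $S^2$. Thus the only genuinely new work, and the main obstacle, is to upgrade the $s$-contribution $sxy+xys$ from membership in $S^3$ (which is all the earlier lemma gave) to membership in $S^2$; this is precisely the step where the hypothesis $xsy\in S^2$ must enter.

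For that step the plan is to extract the two ``wrong-order'' products $xys$ and $sxy$ from $S^2$ one at a time, using symmetric Jordan products and traces as bridges. Since $sy+ys\in S$ and $x\in S$, the product $x(sy+ys)=xsy+xys$ lies in $S^2$; subtracting the hypothesis term $xsy\in S^2$ yields $xys\in S^2$. Symmetrically, $y(sx+xs)=yxs+ysx\in S^2$, while $ysx=(xsy)^*\in S^2$ by the $*$-invariance of $S^2$, so $yxs\in S^2$ as well. Finally, $sxy+yxs=sxy+(sxy)^*\in S$ is a trace, so $sxy=(sxy+yxs)-yxs\in S^2$; adding $xys\in S^2$ gives $sxy+xys\in S^2$, as required. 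Combining the two parts yields $\alpha\in S^2$, hence $(xy-yx)r^*\in S^2$ for all $r$, and therefore $(xy-yx)R\subseteq S^2$.

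I expect the substantive point to be the realization that neither $sxy$ nor $xys$ is ``freely'' in $S^2$; each must be produced by pairing the hypothesis with a symmetric Jordan or trace element, and the entire use of the hypothesis is confined to the symmetric part $s$ of $r$, the skew part $k$ being handled unconditionally. A cleaner but equivalent alternative would be to skip $\alpha,\beta$ and treat $r=s$ and $r=k$ separately: for $r=s$ one shows $(xy-yx)s=xys-yxs\in S^2$ by the identities above, and for $r=k$ one checks $(xy-yx)k\in S^2$ by the same telescoping used for the $k$-part of $\alpha$.
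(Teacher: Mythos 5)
Your proposal is correct and follows essentially the same route as the paper: the same $\alpha=\beta+(xy-yx)r^*$ decomposition with $\beta\in S$, the same split $r=s+k$ with the skew part handled unconditionally, and the hypothesis $xSy\subseteq S^2$ entering only through the symmetric part. The sole difference is at that one step: the paper gets $sxy+xys\in S^2$ in one stroke from the identity $sxy+xys=(sx+xs)y+x(ys+sy)-2xsy$, whereas you derive the slightly stronger facts $xys\in S^2$ and $sxy\in S^2$ separately via Jordan products, traces, and the $*$-invariance of $S^2$ --- a valid, marginally longer variant of the same idea.
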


\begin{proof}
There are two options: \bit
\item $xy-yx=0$: Then, trivially, for every $r \in R$ $(xy-yx)r=0 \in S^2$.
\item $xy-yx\neq 0$: For every $r\in R$, $rxy+xyr^*=rxy+yxr^*-yxr^*+xyr^*$ (we just added and subtracted $yxr^*$).
Let $\alpha=\alpha(r)=rxy+xyr^*$, $\beta=\beta(r)=rxy+yxr^*$. We will just write $\alpha$ and $\beta$, instead of $\alpha(r)$ and $\beta(r)$. So for every $r \in R$, $\alpha=\beta+(xy-yx)r^*$.

Claim: $\alpha \in S^2$, $\beta \in S$.
Indeed, obviously $\beta \in S$, since $(\beta)^*=(rxy+yxr^*)*=yxr^*+rxy=\beta$.
As for $\alpha$: Of course, given $r \in R$, $r$ can be written as $r=s+k$ where $s \in S$ and $k \in K$. $\alpha=rxy+xyr^*=(s+k)xy+xy(s-k)=(sxy+xys)+(kxy-xyk)=(sxy+xys)+[(kx-xk)y-x(yk-ky)]$.

We show now that $sxy+xys \in S^2$: $sxy+xys=(sx+xs)y+x(ys+sy)-2xsy$. But, $sx+sx \in S, y \in S, x \in S, ys+sy \in S$, and $-2xsy \in S^2$ (just by our assumption that for every $\tilde{s} \in S$, $x\tilde{s}y \in S^2$), concluding that $sxy+xys \in S^2$.
$(kx-xk)y-x(yk-ky) \in S^2$ since $kx-xk, y, x, yk-ky \in S$.

Using the claim we get: for every $r \in R$, $(xy-yx)r^*=\alpha-\beta \in S^2$ (remember that $S \subseteq S^2$). But $R=R^*$ (since $r=(r^*)^*$), so for every $r \in R$, $(xy-yx)r \in S^2$.
(We could, of course, from the first place take $r^*xy+xyr=r^*xy+yxr-yxr+xyr$ instead of $rxy+xyr^*=rxy+yxr^*-yxr^*+xyr^*$, and get, without using $R=R^*$, that $(xy-yx)r \in S^2$).
Therefore, $(xy-yx)R \subseteq S^2$.
\eit
\end{proof}

If $S$ is a noncommutative set (in Lemma \ref{first lemma xsy in S^2 then (xy-yx)r in S^2} we have not demanded that $S$ is a noncommutative set), then by definition exist $x,y \in S$ such that $xy-yx \neq 0$.
However, in contrast to what we have seen in the above section about $S^3$ (noncommutativity of $S$ implies that $S^3$ contains a non-zero right ideal of $R$), noncommutativity of $S$ does not imply that $S^2$ contains a non-zero right ideal of $R$.
But, if those $x,y \in S$ such that $xy-yx \neq 0$ also satisfy $xSy \subseteq S^2$ (this means that for every $s \in S$, $xsy \in S^2$), then $S^2$ contains a non-zero right ideal of $R$ (namely, $(xy-yx)R$ where $x,y \in S$ with $xy-yx\neq 0$ and $xSy \subseteq S^2$).

\begin{lem}\label{second lemma xsy in S^2 then r(xy-yx)t in S^2}
Let $R$ be an associative unital $F$-algebra with $\Char(F)\neq 2$. Let $*$ be an involution on $R$ of the first kind and $S \subsetneq R$. Let $x,y \in S$ such that for every $s \in S$, $xsy \in S^2$. Then for every $r,u \in R$, $u(xy-yx)r \in S^2$. In other words, the two-sided ideal $R(xy-yx)R$ is contained in $S^2$.
\end{lem}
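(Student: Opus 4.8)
The plan is to transcribe the proof of Lemma~\ref{second lemma r(xy-yx)t in S^3} with $S^3$ replaced by $S^2$, feeding in Lemma~\ref{first lemma xsy in S^2 then (xy-yx)r in S^2} wherever the $S^3$ argument invoked Lemma~\ref{first lemma (xy-yx)r in S^3}. Write $c=xy-yx$; since $x,y\in S$ we have $c\in K$, and the case $c=0$ is immediate, so assume $c\neq 0$. Lemma~\ref{first lemma xsy in S^2 then (xy-yx)r in S^2} gives the right ideal $cR\subseteq S^2$; since $S^2$ is $*$-invariant (if $w=\sum a_ib_i$ with $a_i,b_i\in S$ then $w^*=\sum b_ia_i\in S^2$), applying $*$ also yields the left ideal $Rc\subseteq S^2$ and the conjugate hypothesis $ySx\subseteq S^2$. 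The engine of the proof is the closure claim that $(cr)u+u^*(cr)\in S^2$ for all $u,r\in R$. Granting it, the endgame is exactly as in the $S^3$ case: by Lemma~\ref{first lemma xsy in S^2 then (xy-yx)r in S^2} applied to $ru$ we have $(cr)u=c(ru)\in S^2$, so subtracting gives $u^*(cr)=u^*(xy-yx)r\in S^2$; and since $R=R^*$, as $u$ runs over $R$ so does $u^*$, whence $u(xy-yx)r\in S^2$ for all $u,r\in R$, i.e.\ $R(xy-yx)R\subseteq S^2$.

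The closure claim is the whole difficulty, and it is exactly where the $S^2$ situation diverges from the $S^3$ one. In Lemma~\ref{second lemma r(xy-yx)t in S^3} the analogous claim held for \emph{every} $w\in S^3$ via the three-factor identity $abcu+u^*abc=ab(cu+u^*c)+(u^*a+au)bc-a(bu^*+ub)c$, each summand being a product of three symmetric elements. The naive two-factor analogue $abu+u^*ab=a(bu+u^*b)+(au+u^*a)b-a(u+u^*)b$ carries an error term $a(u+u^*)b$ which is a product of three symmetric elements and hence only visibly lies in $S^3$; in fact the unconditional statement ``$wu+u^*w\in S^2$ for all $w\in S^2$'' is false, since it would force $aSb\subseteq S^2$ for arbitrary $a,b\in S$. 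Thus the hypothesis $xSy\subseteq S^2$ must enter essentially, and the claim can be expected only for the specific element $w=cr$, not for a general element of $S^2$.

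To prove the claim I would not expand $cr$ as an arbitrary sum of products from $S$, but keep the decomposition supplied by the first-lemma mechanism, $cr=\alpha'(r)-\beta'(r)$, where $\beta'(r)=r^*xy+yxr\in S$ and $\alpha'(r)=r^*xy+xyr=\alpha(r^*)\in S^2$ (here $\alpha$ is the element shown to lie in $S^2$ in the proof of Lemma~\ref{first lemma xsy in S^2 then (xy-yx)r in S^2}). The symmetric part is harmless: as $\beta'(r)\in S$ one has $\beta'(r)u+u^*\beta'(r)=\beta'(r)u+(\beta'(r)u)^*\in S\subseteq S^2$. Everything therefore reduces to $\alpha'(r)u+u^*\alpha'(r)\in S^2$, and here I would process $\alpha'(r)$ through its explicit $S^2$-form (writing $r=s+k$, the only non-obvious summand being the term $xsy$ delivered by the hypothesis) using the Herstein device of Lemma~1.3 in \cite{her1}, namely replacing adjacent products $vx$ and $vy$ by the traces $vx+xv^*,\,vy+yv^*\in S$; the computation is to be steered so that every leftover triple product ends up flanked by $x$ on one side and $y$ on the other, whereupon $xSy\subseteq S^2$ or $ySx\subseteq S^2$ removes it.

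The step I expect to be the main obstacle is precisely this steering. A purely formal symmetrisation that merely adds conjugates uses nothing beyond the skewness of $c$ and the already-known inclusion $Rc\subseteq S^2$, and it is circular: it reduces $\alpha'(r)u+u^*\alpha'(r)\in S^2$ back to $u^*(xy-yx)r\in S^2$, which is the very goal. The hypothesis must therefore be re-injected by deliberately producing factors of the shape $x(\,\cdot\,)y$ with a symmetric middle, and arranging the bookkeeping so that no residual triple product escapes this flanked form (which would leave a term only in $S^3$, beyond the reach of the hypothesis) is the one genuinely delicate point; the rest is the formal assembly above.
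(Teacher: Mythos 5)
You have correctly diagnosed that the $S^3$-style claim ``$wu+u^*w\in S^2$ for all $w\in S^2$'' is false, but the route you then take --- trying to establish $\alpha'(r)u+u^*\alpha'(r)\in S^2$ for the specific element $\alpha'(r)$ by steering the computation back onto $xSy$ --- is exactly the step you yourself flag as unresolved, and it is where your proposal stops short of being a proof. The paper never performs any such steering. The missing idea is simply to replace the $*$-twisted sum $wu+u^*w$ by the ordinary commutator $wu-uw$: for \emph{every} $w\in S^2$ and every $u\in R$ one has $wu-uw\in S^2$, unconditionally, because for $a,b\in S$ the two-factor identity
\[
abu-uab=a(bu+u^*b)-(ua+au^*)b
\]
is exact --- the cross terms $au^*b$ cancel, so there is no error term, and both summands are products of two symmetric elements. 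In other words, $S^2$ is a Lie ideal of $R$, with no hypothesis on $x,y$ needed for this step; the hypothesis $xSy\subseteq S^2$ is consumed entirely in Lemma \ref{first lemma xsy in S^2 then (xy-yx)r in S^2} (to get $(xy-yx)R\subseteq S^2$) and plays no role in the closure claim.

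With that claim in hand the endgame is the one you describe: Lemma \ref{first lemma xsy in S^2 then (xy-yx)r in S^2} gives $(xy-yx)(ru)\in S^2$, the commutator claim with $w=(xy-yx)r$ gives $((xy-yx)r)u-u((xy-yx)r)\in S^2$, and subtracting yields $u((xy-yx)r)\in S^2$ directly (one does not even need to pass through $u^*$ and invoke $R=R^*$). So the gap is not in your overall architecture, which matches the paper's, but in the closure claim itself: you are missing the observation that the correct closure operation for $S^2$ is the Lie bracket rather than the $*$-symmetrised product, and consequently you are left facing a delicate computation that the intended proof never has to perform.
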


\begin{proof}
There are two options: \bit
\item $xy-yx=0$: Then, trivially, for every $r,u \in R$ $u(xy-yx)r=0 \in S^2$.
\item $xy-yx\neq 0$:

Claim: for every $u \in R$ and $w \in S^2$, $wu-uw \in S^2$ (in other words, $S^2$ is a Lie ideal of $R$).
It is enough to show that for every $a,b \in S$, $abu-uab \in S^2$ (indeed, if for every $a_i,b_i \in S$, $a_ib_iu-ua_ib_i \in S^2$, then taking $S^2 \ni w=\sum a_ib_i$, we get $wu-uw=(\sum a_ib_i)u-u(\sum a_ib_i)=\sum (a_ib_iu-ua_ib_i)$, so since each $a_ib_iu-ua_ib_i \in S^2$, we get $wu-uw=\ldots=\sum (a_ib_iu-ua_ib_i) \in S^2$).
Now, $abu-uab=a(bu+u^*b)-(ua+au^*)b$. Clearly, $bu+u^*b, ua+au^* \in S$, concluding that $abu-uab \in S^2$.
So we have proved the claim that for every $u \in R$ and $w \in S^2$, $wu-uw \in S^2$.

Next, the above lemma, Lemma \ref{first lemma xsy in S^2 then (xy-yx)r in S^2}, says that for every $r \in R$, $(xy-yx)r \in S^2$, so from the claim just seen (with $w=(xy-yx)r$), for every $u \in R$ and for every $r \in R$, $((xy-yx)r)u-u((xy-yx)r) \in S^2$.
Use again the above lemma to get that (for every $u \in R$ and for every $r \in R$) $((xy-yx)r)u=(xy-yx)ru \in S^2$.
Therefore, for every $u \in R$ and for every $r \in R$, $-u((xy-yx)r) \in S^2$. So $R(xy-yx)R \subseteq S^2$.
\eit
\end{proof}

If $S$ is a noncommutative set (in Lemma \ref{second lemma xsy in S^2 then r(xy-yx)t in S^2} we have not demanded that $S$ is a noncommutative set), then by definition exist $x,y \in S$ such that $xy-yx \neq 0$.
However, in contrast to what we have seen in the above section about $S^3$ (noncommutativity of $S$ implies that $S^3$ contains a non-zero two-sided ideal of $R$), noncommutativity of $S$ does not imply that $S^2$ contains a non-zero two-sided ideal of $R$.
But, if those $x,y \in S$ such that $xy-yx \neq 0$ also satisfy $xSy \subseteq S^2$, then $S^2$ contains a non-zero two-sided ideal of $R$ (namely, $R(xy-yx)R$ where $x,y \in S$ with $xy-yx\neq 0$ and $xSy \subseteq S^2$).
Next, similarly to Theorem \ref{two sided in S^3} which dealt with $S^3$, we have the following theorem which deals with $S^2$.

\begin{thm}\label{xsy in S^2 then two sided in S^2}
Let $R$ be an associative unital $F$-algebra with $\Char(F)\neq 2$. Let $*$ be an involution on $R$ of the first kind and $S \subsetneq R$. Assume $S$ is not a commutative set. Also assume that there exist $x,y \in S$ such that $xy-yx \neq 0$ and for every $s \in S$, $xsy \in S^2$. Then there exists a non-zero two-sided ideal of $R$ which is contained in $S^2$.
\end{thm}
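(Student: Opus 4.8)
The plan is to observe that this theorem stands in exactly the same relation to Lemma \ref{second lemma xsy in S^2 then r(xy-yx)t in S^2} that Theorem \ref{two sided in S^3} stands to Lemma \ref{second lemma r(xy-yx)t in S^3}, and so the argument should be essentially a one-line application of the preceding lemma, as the paragraph following that lemma already anticipates. Concretely, I would invoke the hypothesis to fix elements $x,y \in S$ with $xy-yx \neq 0$ and with $xsy \in S^2$ for every $s \in S$, and then feed precisely this pair $(x,y)$ into Lemma \ref{second lemma xsy in S^2 then r(xy-yx)t in S^2}.

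First I would note that the hypothesis $xSy \subseteq S^2$ is exactly the standing assumption of Lemma \ref{second lemma xsy in S^2 then r(xy-yx)t in S^2}, so the conclusion $u(xy-yx)r \in S^2$ holds for all $r,u \in R$; equivalently, the two-sided ideal $R(xy-yx)R$ is contained in $S^2$. Second, I would check that this ideal is non-zero: since $1 \in R$, the element $1(xy-yx)1 = xy-yx$ lies in $R(xy-yx)R$, and by assumption $xy-yx \neq 0$, so $R(xy-yx)R \neq 0$. Combining the two observations, $R(xy-yx)R$ is the desired non-zero two-sided ideal of $R$ contained in $S^2$.

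I expect there to be no genuine obstacle here, because all of the real work — the Jordan-product manipulation showing $S^2$ is a Lie ideal, and the reduction via Lemma \ref{first lemma xsy in S^2 then (xy-yx)r in S^2} — has already been carried out inside Lemma \ref{second lemma xsy in S^2 then r(xy-yx)t in S^2}. The only point worth flagging is that the explicit hypothesis ``there exist $x,y \in S$ such that $xy-yx \neq 0$'' already forces $S$ to be noncommutative, so the separately stated assumption that $S$ is not a commutative set is in fact redundant for this statement; I would simply not rely on it beyond guaranteeing the existence of the relevant pair, and let the non-vanishing of $xy-yx$ do the work of making the ideal non-trivial.

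Thus the proof I would write reads, in full: by hypothesis there exist $x,y \in S$ with $xy-yx \neq 0$ and $xSy \subseteq S^2$; Lemma \ref{second lemma xsy in S^2 then r(xy-yx)t in S^2} applied to this pair gives $R(xy-yx)R \subseteq S^2$, and since $0 \neq xy-yx = 1(xy-yx)1 \in R(xy-yx)R$, the ideal $R(xy-yx)R$ is a non-zero two-sided ideal of $R$ contained in $S^2$, as required.
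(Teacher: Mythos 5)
Your proposal is correct and is exactly the paper's argument: the paper's proof simply cites Lemma \ref{second lemma xsy in S^2 then r(xy-yx)t in S^2} together with the observation, made in the paragraph following that lemma, that $R(xy-yx)R$ is non-zero because $0 \neq xy-yx = 1(xy-yx)1$. Your side remark that the noncommutativity of $S$ is redundant given the explicit hypothesis on $x,y$ is also accurate and consistent with the paper's own remarks elsewhere.
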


\begin{remark}
Of course the assumption that $S$ is not a commutative set implies that there exists $x,y \in S$ such that $xy-yx \neq 0$. Here we demand more; we demand to find such non commute $x,y \in S$ with $xsy \in S^2$ for every $s \in S$ (by definition of $S^3$, $xsy \in S^3$ for every $s \in S$).
\end{remark}

\begin{proof}
Follows immediately form Lemma \ref{second lemma xsy in S^2 then r(xy-yx)t in S^2}, as was explained immediately after it.
\end{proof}

Similarly to Theorem \ref{two sided in S^3} and its corollary, we have as a corollary:

\begin{cor}[Second step towards an answer to Herstein's question- A first criterion]\label{xsy in S^2 and simple then S^2=R} 
Let $R$ be a simple associative unital $F$-algebra with $\Char(F)\neq 2$. Let $*$ be an involution on $R$ of the first kind and $S \subsetneq R$. Assume $S$ is not a commutative set. Also assume that there exist $x,y \in S$ such that $xy-yx \neq 0$ and for every $s \in S$, $xsy \in S^2$. Then $S^2=R$.
\end{cor}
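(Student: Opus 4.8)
The plan is to deduce this corollary from Theorem \ref{xsy in S^2 then two sided in S^2} by invoking the simplicity of $R$, in exactly the same way that Corollary \ref{simple S^3=R} was deduced from Theorem \ref{two sided in S^3}.

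First I would observe that the hypotheses here are precisely those of Theorem \ref{xsy in S^2 then two sided in S^2}: $R$ is unital with $\Char(F)\neq 2$, the involution $*$ is of the first kind, $S\subsetneq R$, the set $S$ is not commutative, and there exist $x,y\in S$ with $xy-yx\neq 0$ and $xsy\in S^2$ for every $s\in S$. Applying that theorem yields a non-zero two-sided ideal $I$ of $R$ with $I\subseteq S^2$. Concretely one may take $I=R(xy-yx)R$, which is contained in $S^2$ by Lemma \ref{second lemma xsy in S^2 then r(xy-yx)t in S^2} and is non-zero since $0\neq xy-yx=1\cdot(xy-yx)\cdot 1\in I$.

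Second, I would use that $R$ is simple, so its only two-sided ideals are $0$ and $R$. Since $I\neq 0$, we are forced to conclude $I=R$. Hence $R=I\subseteq S^2$, and combined with the trivial inclusion $S^2\subseteq R$ this gives $S^2=R$, as desired.

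I do not expect any genuine obstacle at this stage. All of the substantive work has already been carried out in establishing that $S^2$ contains a non-zero two-sided ideal, namely in Lemma \ref{second lemma xsy in S^2 then r(xy-yx)t in S^2} and Theorem \ref{xsy in S^2 then two sided in S^2}; the only additional ingredient is the elementary fact that a non-zero two-sided ideal of a simple ring is the whole ring. The conceptual heart of the argument—the single place where the hypothesis $xSy\subseteq S^2$ is actually exploited—lies in those two preceding lemmas (in particular in showing that $S^2$ is a Lie ideal and absorbs the relevant products), and not in the short deduction making up this corollary.
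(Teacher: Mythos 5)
Your proposal is correct and follows exactly the paper's own route: the paper's proof of this corollary is precisely the one-line deduction from Theorem \ref{xsy in S^2 then two sided in S^2} (a non-zero two-sided ideal of the simple ring $R$ lies in $S^2$, hence equals $R$), with all the substantive work residing in Lemma \ref{second lemma xsy in S^2 then r(xy-yx)t in S^2} as you note.
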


\begin{proof}
Follows at once from Theorem \ref{xsy in S^2 then two sided in S^2}.
\end{proof}

\begin{remark}
Let $R$ be as in Corollary \ref{xsy in S^2 and simple then S^2=R}.
If $dim_F(R)=n \infty$, then $n > dim_F(S)\geq n^{1/2}$: Indeed, $dim_F(R)= dim_F(S^2) \leq (dim_F(S))^2$ (the first equality is true since $R=S^2$). Therefore, $n \leq (dim_F(S))^2$, so $dim_F(S)\geq n^{1/2}$.
\end{remark}


If one wonders if the condition that there exist $x,y \in S$ such that $xy-yx \neq 0$ and for every $s \in S$, $xsy \in S^2$ is satisfied by matrices, then the answer is yes, as the next example shows.

\begin{example}[Major example: Matrices over a field]\label{section S^2: example matrices} 

Let $R=\M_n(F)$, where $F$ is a field of characteristic not equal to $2$ and $n>1$.
Corollary \ref{xsy in S^2 and simple then S^2=R} can be applied for both the transpose involution and the symplectic involution on $\M_n(F)$, thus showing that in each of these two cases $S^2=R$. Indeed;\bit
\item The transpose involution: For any $n \geq 2$, take $x=1e_11-1e_22$, $y=1e_12+1e_21$ (where all the other components are zero). It is easily seen that $x,y \in S$.
$xy=1e_12-1e_21$ and $yx=-xy$, so $xy-yx=2xy=2e_12-2e_21 \neq 0$.
Denote any $s \in S$ by $s=ae_11+be_12+be_21+de_22+\ldots$ (one can see that the other components are not important in the computation of $xsy$, so we do not bother to write them).
A direct computation shows that $xsy=be_11+ae_12-de_21-be_22$. $xsy$ is indeed in $S^2$: $xsy=(be_11-be_22)+(ae_12-de_21)$.
The first term $be_11-be_22$ is obviously in $S$.
As for the second term: $ae_12-de_21=(ae_11-de_22)(1e_12+1e_21)$, obviously $ae_11-de_22 \in S$ and $1e_12+1e_21 \in S$, so $ae_12-de_21 \in S^2$.
Therefore, $xSy \subseteq S^2$ and Corollary \ref{xsy in S^2 and simple then S^2=R} can be applied.
\item The symplectic involution: For any $n=2m \geq 4$ (remember that if $n=2$, then $S \cong F$, so $S^2=S \cong F \subsetneq R$), take $x=1e_{1,m+2}-1e_{2,m+1}$, $y=1e_12+1e_{m+2,m+1}$ (where all the other components are zero).
It is easily seen that $x,y \in S$.
$xy=1e_{1,m+1}$ and $yx=-xy$, so $xy-yx=2xy=2e_{1,m+1} \neq 0$.
Denote any $s \in S$ by $s=(a_{ij})$.
A direct computation shows that $xsy=A+B+C$ where $A=a_{m+2,1}e_{12}$, $B=a_{22}e_{1,m+1}$, $C=-a_{21}e_{2,m+1}$. It is not difficult to see that $e_{12},e_{1,m+1},e_{2,m+1} \in S^2$.
Indeed,
\bit
\item $(e_{11}+e_{m+1,m+1})(e_{12}+e_{m+2,m+1})=
e_{11}e_{12}+e_{11}e_{m+2,m+1}+e_{m+1,m+1}e_{12}+e_{m+1,m+1}e_{m+2,m+1}=e_{12}$,
so $e_{12} \in S^2$ (of course, $e_{11}+e_{m+1,m+1} \in S$).
\item $xy=(e_{1,m+2}-e_{2,m+1})(e_{12}+e_{m+2,m+1})=e_{1,m+2}e_{12}+e_{1,m+2}+e_{m+2,m+1}-e_{2,m+1}e_{12}-e_{2,m+1}e_{m+2,m+1}=e_{1,m+1}$.
\item
$(e_{22}+e_{m+2,m+2})x=(e_{22}+e_{m+2,m+2})(e_{1,m+2}-e_{2,m+1})=e_{22}e_{1,m+2}-e_{22}e_{2,m+1}+e_{m+2,m+2}e_{1,m+2}-e_{m+2,m+2}e_{2,m+1}=-e_{2,m+1}$ (of course, $e_{22}+e_{m+2,m+2} \in S$).
\eit
Hence $xsy$ is indeed in $S^2$. Therefore, $xSy \subseteq S^2$ and Corollary \ref{xsy in S^2 and simple then S^2=R} can be applied.
\eit
\end{example}


Now it is time to bring the following criterion for $S^2=R$, which is just a combination of Corollary \ref{simple S^3=R} and of Corollary \ref{xsy in S^2 and simple then S^2=R}.

\begin{thm}[A first possible criterion for a positive answer to Herstein's question]\label{first criterion} 
Let $R$ be a simple associative unital $F$-algebra with $\Char(F)\neq 2$. Let $*$ be an involution on $R$ of the first kind and $S \subsetneq R$. Assume $S$ is not a commutative set.
Then: there exist $x,y \in S$ such that $xy-yx \neq 0$ and for every $s \in S$, $xsy \in S^2$ $\Leftrightarrow$ $S^2=R$.
\end{thm}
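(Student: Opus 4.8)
The plan is to establish the biconditional one implication at a time, and in both cases to lean entirely on the two corollaries already in hand; no fresh computation is required, so this theorem is really a repackaging.

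For the forward implication ($\Rightarrow$), I would assume there exist $x,y \in S$ with $xy-yx \neq 0$ and $xSy \subseteq S^2$ (that is, $xsy \in S^2$ for every $s \in S$), and then simply invoke Corollary \ref{xsy in S^2 and simple then S^2=R}. That corollary has \emph{exactly} these hypotheses---$R$ simple, $\Char(F)\neq 2$, the involution $*$ of the first kind, $S \subsetneq R$ noncommutative, and the existence of such $x,y$---and concludes $S^2=R$. So this direction is a one-line citation.

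For the reverse implication ($\Leftarrow$), I would assume $S^2=R$ and produce the required $x,y$. Since $S$ is assumed not to be a commutative set, by definition there exist $x,y \in S$ with $xy-yx\neq 0$, which takes care of the first condition. For the second condition, I would observe that for \emph{any} $s \in S$ the element $xsy$ already lies in $R$, and since $R=S^2$ by hypothesis, $xsy \in S^2$ holds automatically. Hence $xSy \subseteq S^2$ is satisfied vacuously, and the same pair $x,y$ witnesses the left-hand side.

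The step I expect to matter most is recognizing that the reverse direction is genuinely trivial once $S^2=R$: the clause ``for every $s\in S$, $xsy \in S^2$'' carries no information when $S^2$ is already all of $R$. The real weight sits in the forward direction, and there it has already been pushed back into Lemma \ref{second lemma xsy in S^2 then r(xy-yx)t in S^2} (the fact that $xSy \subseteq S^2$ forces the two-sided ideal $R(xy-yx)R$ into $S^2$) together with the simplicity of $R$, which upgrades that nonzero ideal to all of $R$. Consequently I anticipate no genuine obstacle at the level of this theorem; the only point requiring care is stating the two directions cleanly and citing Corollary \ref{xsy in S^2 and simple then S^2=R} for the nontrivial half.
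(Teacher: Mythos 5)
Your proposal is correct and matches the paper's own proof essentially verbatim: the forward direction is a citation of Corollary \ref{xsy in S^2 and simple then S^2=R}, and the reverse direction uses noncommutativity of $S$ to produce $x,y$ with $xy-yx\neq 0$ and then notes $xsy\in R=S^2$ trivially. The paper additionally sketches a second, contrapositive route for the reverse direction via Corollary \ref{simple S^3=R}, but your first (and simpler) argument is exactly the one the paper gives as its primary proof.
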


\begin{proof}
\bit
\item $\Rightarrow$: If there exist $x,y \in S$ such that $xy-yx \neq 0$ and for every $s \in S$, $xsy \in S^2$, then Corollary \ref{xsy in S^2 and simple then S^2=R} says that $S^2=R$.
\item $\Leftarrow$: Since $S$ is not a commutative set, there exist $x,y \in S$ such that $xy-yx \neq 0$. $S^2=R$ so every element $r \in R$ is in $S^2$, in particular, for every $s \in S$, $xsy \in R=S^2$.
Another way to see this direction: Otherwise, for every $x,y \in S$ such that $xy-yx \neq 0$, exists $s(x,y) \in S$ such that $xs(x,y)y \notin S^2$.
Now since $S$ is not a commutative set there do exist $x,y \in S$ such that $xy-yx \neq 0$. Therefore, for this pair $x,y$ exists $s(x,y) \in S$ such that $xs(x,y)y \notin S^2$.
But by definition of $S^3$, $xs(x,y)y \in S^3$, so we have $S^2 \subsetneq S^3$. Finally, Corollary \ref{simple S^3=R} says that $S^3=R$, so $S^2 \subsetneq S^3=R$.
\eit
\end{proof}

\begin{remark}[Possible criterion in practice]\label{remark possible criterion} 

Let $R$ be a simple associative unital $F$-algebra with $\Char(F)\neq 2$. Let $*$ be an involution on $R$ of the first kind and $S \subsetneq R$. Assume $S$ is not a commutative set.
The first possible criterion says that there exist $x,y \in S$ such that $xy-yx \neq 0$ and for every $s \in S$, $xsy \in S^2$ $\Leftrightarrow$ $S^2=R$.
However, given such an algebra, in order to show that $S^2 \neq R$, it is enough to find $x,y,s \in S$ such that $xsy \notin S^2$, which seems a little easier then to find for every noncommuting $x,y \in S$, an element (which depends on these $x,y$) $s=s(x,y)$ such that $xs(x,y)y \notin S^2$.
Indeed, otherwise for any triple $s_1,s_2,s_3 \in S$ $s_1s_2s_3 \in S^2$, hence $S^3 \subseteq S^2$, so (since $S^3 \supseteq S^2$) $S^3=S^2$. But Corollary \ref{simple S^3=R} says that $S^3=R$, therefore $S^2=R$.
\end{remark}

Notice that there is no restriction on $dim_{Z(R)}R$ (we can have $dim_{Z(R)}R=4$, as in $\M_2(F)$ with the transpose involution, $\Char(F)\neq 2$);
Again, if we insist that $dim_{Z(R)}R$ will enter our results, then we may have the following:

\begin{cor}\label{first criterion with dim>4}
Let $R$ be a simple associative unital $F$-algebra with $\Char(F)\neq 2$. Let $*$ be an involution on $R$ of the first kind such that $Z(R) \subsetneq S$ (and not just $Z(R) \subseteq S$) and $S \subsetneq R$. Assume that $dim_{Z(R)}R > 4$.
Then: there exist $x,y \in S$ such that $xy-yx \neq 0$ and for every $s \in S$, $xsy \in S^2$ $\Leftrightarrow$ $S^2=R$.
\end{cor}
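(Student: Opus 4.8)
The plan is to deduce this corollary directly from Theorem \ref{first criterion} once we verify that the present hypotheses force $S$ to be a noncommutative set. The biconditional in the statement is word-for-word the conclusion of Theorem \ref{first criterion}, whose only extra standing assumption is that $S$ is not commutative; so the entire content of the proof is to supply that one fact from the hypotheses $Z(R)\subsetneq S$ and $dim_{Z(R)}R>4$.

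First I would apply Corollary \ref{cor of centS in Z}, which holds under exactly the hypotheses present here ($R$ simple, $\Char(F)\neq 2$, $*$ of the first kind, $S\subsetneq R$, and $dim_{Z(R)}R>4$). It states that $S$ is a commutative set if and only if $Z(R)=S$. Taking the contrapositive, $S$ is noncommutative precisely when $Z(R)\neq S$. Because the involution is of the first kind we always have $Z(R)\subseteq S$, so the strict inclusion $Z(R)\subsetneq S$ assumed in the statement is exactly the condition $Z(R)\neq S$. Hence $S$ is not a commutative set.

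With noncommutativity of $S$ in hand, all the hypotheses of Theorem \ref{first criterion} are now met, and I would invoke it to obtain the stated equivalence: there exist $x,y\in S$ with $xy-yx\neq 0$ and $xsy\in S^2$ for every $s\in S$, if and only if $S^2=R$.

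I expect no real obstacle: the argument mirrors the proof of Theorem \ref{simple with dim > 4} line for line, with Theorem \ref{first criterion} here playing the role that Corollary \ref{simple S^3=R} played there. The only subtlety worth flagging is that the hypothesis $dim_{Z(R)}R>4$ is used solely to license the application of Corollary \ref{cor of centS in Z} (and through it Herstein's Theorem \ref{centS in Z}); it is this dimension bound, not the strict inclusion $Z(R)\subsetneq S$ alone, that converts the strict inclusion into the noncommutativity of $S$.
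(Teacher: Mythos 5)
Your proposal is correct and follows essentially the same route as the paper: the paper's own proof likewise uses Corollary \ref{cor of centS in Z} together with $Z(R)\subsetneq S$ to conclude that $S$ is noncommutative and then falls back on the argument of Theorem \ref{first criterion} (handling the forward implication by citing Corollary \ref{xsy in S^2 and simple then S^2=R} directly, which is just the forward half of that theorem). Your observation that the dimension hypothesis serves only to activate Corollary \ref{cor of centS in Z} matches the paper's intent exactly.
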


\begin{proof}
\bit
\item $\Rightarrow$: If there exist $x,y \in S$ such that $xy-yx \neq 0$ and for every $s \in S$, $xsy \in S^2$, then Corollary \ref{xsy in S^2 and simple then S^2=R} says that $S^2=R$.
\item $\Leftarrow$: From Corollary \ref{cor of centS in Z} $S$ is not a commutative set. Now proceed exactly as in the proof of Theorem \ref{first criterion}.
\eit
\end{proof}

If one prefers to dismiss of the condition that $R$ is simple, but still have $S^2=R$, then we offer the following.
A word of caution: There is a difference between (ii) of Proposition \ref{section S^3: xy-yx is invertible} and (ii) of the following proposition, Proposition \ref{section S^2: xy-yx is invertible};
In (ii) of Proposition \ref{section S^3: xy-yx is invertible} we give an upper bound for the number of monomials of length $3$ in the representation of every element $r \in R$, namely: $5$ monomials at most are needed.
However, in (ii) of Proposition \ref{section S^2: xy-yx is invertible} we do not give an upper bound for the number of monomials of length $2$ in the representation of every element $r \in R$.
We only know that there exist $x,y \in S$ such that $xy-yx \neq 0$ and for every $s \in S$, $xsy \in S^2$.
Generally, $xsy \in S^2$ just tell us that $xsy=\sum_{1 \leq i \leq m(s)} a_ib_i$ where $a_i,b_i \in S$ and $m=m(s)$ can be any finite number (which depends on the particular $s \in S$).
If there exists $M \in \N$ such that for every $s \in S$, $m(s) \leq M$, then it is possible to give an upper bound for the number of monomials of length $2$ in the representation of every element $r \in R$, namely $M+5$ monomials at most are needed.

\begin{prop}\label{section S^2: xy-yx is invertible}
Let $R$ be an associative unital $F$-algebra with $\Char(F)\neq 2$. Let $*$ be an involution on $R$ of the first kind and $S \subsetneq R$. Assume $S$ is not a commutative set. Then, 
\bit
\item [(i)] Assume that there exist $x,y \in S$ such that $xy-yx \neq 0$, for every $s \in S$, $xsy \in S^2$, and $1$ can be written as $\sum_{1 \leq i \leq m}u_i(xy-yx)v_i$ where $u_i,v_i \in R$. Then $S^2=R$.
\item [(ii)] Assume that a stronger condition then the condition in (i) is satisfied, namely, that there exist $x,y \in S$ such that $xy-yx$ is right invertible and for every $s \in S$, $xsy \in S^2$.
Also assume that there exists $M \in \N$ such that for every $s \in S$, $xsy=\sum_{1 \leq i \leq M} a_i(s)b_i(s)$ where $a_i(s),b_i(s) \in S$.
\eit
Then (in addition to $S^2=R$) each element of $R$ can be written in the form $\lambda+ \mu$, 
where $\lambda \in S$, 
$\mu =a_1b_1+a_2b_2+\ldots+a_{M+3}b_{M+3}+a_{M+4}b_{M+4}$, with $a_1 ,b_1,\ldots, a_{M+4}, b_{M+4} \in S$.
\end{prop}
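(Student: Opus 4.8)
The plan is to treat the two parts separately: part (i) is an immediate consequence of the two-sided version of the lemma, while part (ii) is a refinement of the proof of Lemma \ref{first lemma xsy in S^2 then (xy-yx)r in S^2} in which I keep careful track of the number of length-$2$ monomials.

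For part (i), I would argue exactly as in (i) of Proposition \ref{section S^3: xy-yx is invertible}, replacing $S^3$ by $S^2$ throughout. Writing $1 = \sum_{1 \leq i \leq m} u_i(xy-yx)v_i$ shows that the two-sided ideal $R(xy-yx)R$ equals $R$. On the other hand, since $x,y \in S$ satisfy $xsy \in S^2$ for every $s \in S$, Lemma \ref{second lemma xsy in S^2 then r(xy-yx)t in S^2} gives $R(xy-yx)R \subseteq S^2$. Combining the two, $S^2 = R$.

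For part (ii), right invertibility of $xy-yx$ supplies $z \in R$ with $(xy-yx)z = 1$, and part (i) already yields $S^2 = R$. To express a given $t \in R$ in the claimed form, I would apply the identity $(xy-yx)r^* = \alpha(r) - \beta(r)$ from the proof of Lemma \ref{first lemma xsy in S^2 then (xy-yx)r in S^2} with $r = (zt)^*$, so that $r^* = zt$ and hence $t = 1 \cdot t = ((xy-yx)z)t = (xy-yx)(zt) = (xy-yx)r^*$. Recall that with $r = s+k$, $s \in S$, $k \in K$, one has $\beta = rxy + yxr^* \in S$ and $\alpha = (sxy + xys) + [(kx-xk)y - x(yk-ky)]$. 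Thus $\lambda := -\beta \in S$ furnishes the symmetric summand, and it remains to write $\mu := \alpha$ as a sum of exactly $M+4$ products of pairs of symmetric elements.

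The heart of the argument is this count. The bracketed term $(kx-xk)y - x(yk-ky)$ is already a sum of two products of symmetric elements, since $kx-xk,\, y,\, x,\, yk-ky \in S$. For the remaining term I use the identity $sxy + xys = (sx+xs)y + x(ys+sy) - 2xsy$, verified by expanding both sides. Here $(sx+xs)y$ and $x(ys+sy)$ are two further products of symmetric elements, while the uniform-bound hypothesis gives $xsy = \sum_{1 \leq i \leq M} a_i(s)b_i(s)$ with $a_i(s),b_i(s) \in S$, so that $-2xsy = \sum_{1 \leq i \leq M} (-2a_i(s))b_i(s)$ is a sum of $M$ products of symmetric elements (the scalar $-2$ being absorbed into the first factor, which remains symmetric as $\Char(F) \neq 2$ and $S$ is an $F$-subspace). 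Altogether $\alpha$ is a sum of $2 + 2 + M = M+4$ products of pairs of symmetric elements, i.e. $\mu = a_1b_1 + \cdots + a_{M+4}b_{M+4}$, and evaluating at $r = (zt)^*$ gives $t = \lambda + \mu$ as required.

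Since all the structural content is already carried by Lemma \ref{first lemma xsy in S^2 then (xy-yx)r in S^2} and Lemma \ref{second lemma xsy in S^2 then r(xy-yx)t in S^2}, I do not expect a genuine obstacle; the only delicate point is bookkeeping, namely ensuring the total is exactly $M+4$ and recognizing that it is precisely the \emph{uniform} bound $M$ (rather than merely $xsy \in S^2$) that keeps the count independent of $s$. This is exactly the feature that distinguishes (ii) from (i) and the reason the hypothesis on $M$ is needed.
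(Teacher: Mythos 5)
Your proposal is correct and follows essentially the same route as the paper's proof: part (i) via Lemma \ref{second lemma xsy in S^2 then r(xy-yx)t in S^2} forcing $R(xy-yx)R=R\subseteq S^2$, and part (ii) via the decomposition $(xy-yx)r^*=\alpha(r)-\beta(r)$ with the identity $sxy+xys=(sx+xs)y+x(ys+sy)-2xsy$ and the same $2+2+M$ count, evaluated at $r=(zr)^*$ (your $(zt)^*$). Your explicit absorption of the scalar $-2$ into the symmetric factor is a slightly tidier piece of bookkeeping than the paper's, but the argument is the same.
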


\begin{proof}
\bit
\item [(i)] Write $1=\sum_{1 \leq i \leq m}u_i(xy-yx)v_i$ where $u_i,v_i \in R$ and $x,y \in S$ such that $xy-yx \neq 0$ and for every $s \in S$, $xsy \in S^2$.
This of course implies that the non-zero two-sided ideal $R(xy-yx)R$ equals $R$. But from Lemma \ref{second lemma xsy in S^2 then r(xy-yx)t in S^2}, $R(xy-yx)R$ is contained in $S^2$. Therefore, $S^2=R$.
\item [(ii)] By assumption there exists $x,y \in S$ such that $xy-yx$ is right invertible, so there exists $z \in R$ such that $(xy-yx)z=1$.
For every $r\in R$, $rxy+xyr^*=rxy+yxr^*-yxr^*+xyr^*$. Let $\alpha=\alpha(r)=rxy+xyr^*$, $\beta=\beta(r)=rxy+yxr^*$.  So for every $r \in R$, $(xy-yx)r^*=\alpha(r)-\beta(r)$.
We have seen in the proof of Lemma \ref{first lemma xsy in S^2 then (xy-yx)r in S^2} that $\beta(r) \in S$ and $\alpha(r) \in S^2$.
More elaborately for $\alpha(r)$, we have seen that if $r=s+k$ with $s \in S$ and $k \in K$:
$\alpha(r)=rxy+xyr^*=\ldots=(sxy+xys)+[(kx-xk)y-x(yk-ky)]=(sx+xs)y+x(ys+sy)-2xsy+[(kx-xk)y-x(yk-ky)]=(sx+xs)y+x(ys+sy)-2[\sum_{1 \leq i \leq M}a_i(s)b_i(s)]+[(kx-xk)y-x(yk-ky)]$,
where we have used the extra assumption that there exists $M \in \N$ such that for every $s \in S$, $xsy=\sum_{1 \leq i \leq M} a_i(s)b_i(s)$ where $a_i(s),b_i(s) \in S$.
So denote: $\lambda(r)=-\beta(r)=-(rxy+yxr^*) \in S$, $\mu(r)=\alpha(r)=(sx+xs)y+x(ys+sy)-2[\sum_{1 \leq i \leq M}a_i(s)b_i(s)]+[(kx-xk)y-x(yk-ky)]$.
Hence, $(xy-yx)r^*=-\beta(r)+\alpha(r)=\lambda(r)+\mu(r)$.
Define $a_1(r)=kx-xk, b_1(r)=y, a_2(r)=-x, b_2(r)=yk-ky, a_3(r)=sx+xs, b_3(r)=y, a_4(r)=x, b_4(r)=ys+sy, a_5(r)=a_1(s), b_5(r)=b_1(s), a_6(r)=a_2(s), b_6(r)=b_2(s), \ldots, a_{M+4}(r)=a_M(s), b_{M+4}(r)=b_M(s)$.
Therefore, $(xy-yx)r^*=-\beta(r)+\alpha(r)=\lambda(r)+\mu(r)=\lambda(r)+a_1(r)b_1(r)+a_2(r)b_2(r)+\ldots+a_{M+4}(r)b_{M+4}(r)$.
Now, for $r \in R$, $r=1r=((xy-yx)z)r=(xy-yx)(zr)$ (here we use the right invertibility of $xy-yx$).

In our notations, 

$r=(xy-yx)(zr)=-\beta((zr)^*)+\alpha((zr)^*)=\lambda((zr)^*)+\mu((zr)^*)=
\lambda(zr)^*+a_1((zr)^*)b_1((zr)^*)+a_2((zr)^*)b_2((zr)^*)+\ldots+a_{M+4}((zr)^*)b_{M+4}((zr)^*)$.
So each element $r \in R$ can be written in the requested form.
\eit
\end{proof}

\begin{example}\label{section S^2 without simple: example matrices}
Let $R=\M_n(F)$, where $F$ is a field of characteristic not equal to $2$, $n>1$.
As for (ii) of Proposition \ref{section S^2: xy-yx is invertible} one must be careful:
\bit
\item The transpose involution: For $n=2$, if one takes $x$ and $y$ as in Example \ref{section S^2: example matrices}, namely $x=1e_11-1e_22$ and $y=1e_12+1e_21$, then $xy-yx$ is invertible, $xSy \subseteq S^2$ and $2$ is such that for every $s \in S$, $xsy=\sum_{1 \leq i \leq 2} a_i(s)b_i(s)$ where $a_i(s),b_i(s) \in S$.
Therefore, we can use (ii) of Proposition \ref{section S^2: xy-yx is invertible}.
However, for $n > 2$ for those $x$ and $y$, $xy-yx$ is not invertible.
\item The symplectic involution: For $n=2m \geq 4$, let $x$ and $y$ be as in Example \ref{section S^2: example matrices}, namely $x=1e_{1,m+2}-1e_{2,m+1}$, $y=1e_12+1e_{m+2,m+1}$. $xy-yx$ is not invertible.
\eit

This only shows that the particular $x$ and $y$ of the transpose involution and the particular $x$ and $y$ of the symplectic involution from Example \ref{section S^2: example matrices}, do not satisfy (ii) of Proposition \ref{section S^2: xy-yx is invertible}.

\end{example}

We would like to remark about the dimension of a simple $R$ over $Z(R)$.
Unfortunately, we are not able to give an analog theorem to Theorem \ref{simple with dim > 4}.
Namely, if $R$ is a simple associative unital $F$-algebra with $\Char(F)\neq 2$, $*$ is an involution on $R$ of the first kind such that $Z(R) \subsetneq S$ (and not just $Z(R) \subseteq S$), $S \subsetneq R$ and $dim_{Z(R)}R > 4$, then not necessarily that $S^2=R$ (remember that Theorem \ref{simple with dim > 4} says that $S^3=R$).
This is because Corollary \ref{cor of centS in Z} just says that $S$ is a noncommutative set (since $Z(R) \subsetneq S$), but it does not say that there exist $x,y \in S$ such that $xy-yx \neq 0$ and $xSy \subseteq S^2$.
In order to use Corollary \ref{xsy in S^2 and simple then S^2=R} we need to find $x,y \in S$ such that $xy-yx \neq 0$ and $xSy \subseteq S^2$.
If we insist that $dim_{Z(R)}R$ will enter our results, then we may have the following. However, it is just Corollary \ref{xsy in S^2 and simple then S^2=R} with "$Z(R) \subsetneq S$ (and not just $Z(R) \subseteq S$) and $dim_{Z(R)}R > 4$" instead of "$S$ is not a commutative set".

\begin{thm}\label{simple with dim and more}
Let $R$ be a simple associative unital $F$-algebra with $\Char(F)\neq 2$. Let $*$ be an involution on $R$ of the first kind such that $Z(R) \subsetneq S$ (and not just $Z(R) \subseteq S$) and $S \subsetneq R$. Assume that $dim_{Z(R)}R > 4$. Also assume that there exist $x,y \in S$ such that $xy-yx \neq 0$ and $xSy \subseteq S^2$.
Then $S^2=R$.
\end{thm}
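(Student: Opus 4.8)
The plan is to reduce the statement to Corollary~\ref{xsy in S^2 and simple then S^2=R}, since the only difference between that corollary and the present theorem is that here the hypothesis ``$S$ is not a commutative set'' has been replaced by the two hypotheses $Z(R)\subsetneq S$ and $dim_{Z(R)}R>4$. Thus the single thing I must establish is that, under these two replacement hypotheses, $S$ is indeed noncommutative; once that is in hand, the conclusion $S^2=R$ follows by a direct appeal to the earlier corollary, with the supplied pair $x,y$ doing all the real work.

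First I would invoke Corollary~\ref{cor of centS in Z}, which is applicable precisely because $R$ is simple, $\Char(F)\neq 2$, $*$ is of the first kind, $S\subsetneq R$, and (crucially) $dim_{Z(R)}R>4$. That corollary asserts the equivalence: $S$ is a commutative set $\Leftrightarrow$ $Z(R)=S$. Reading it contrapositively, the standing assumption $Z(R)\subsetneq S$ forces $S$ to be a noncommutative set. This is exactly the point at which the dimension hypothesis is consumed: it is needed only to license Herstein's theorem (Theorem~\ref{centS in Z}) that sits inside Corollary~\ref{cor of centS in Z}, and it plays no further role toward the equality $S^2=R$.

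With $S$ now known to be noncommutative, every remaining hypothesis of Corollary~\ref{xsy in S^2 and simple then S^2=R} is available: $R$ is simple, $\Char(F)\neq 2$, $*$ is of the first kind, $S\subsetneq R$, $S$ is not a commutative set, and by assumption there exist $x,y\in S$ with $xy-yx\neq 0$ and $xSy\subseteq S^2$. Applying that corollary directly yields $S^2=R$, which completes the argument.

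Since the proof collapses to a one-line deduction once noncommutativity of $S$ is secured, there is no genuine obstacle here; the only subtlety worth flagging is precisely the bookkeeping of where each hypothesis is used. In particular, $dim_{Z(R)}R>4$ contributes nothing toward $S^2=R$ beyond guaranteeing, via Corollary~\ref{cor of centS in Z}, that $Z(R)\subsetneq S$ implies $S$ noncommutative, while the entire mathematical substance rests on the existence of the pair $x,y\in S$ with $xSy\subseteq S^2$, exactly as in Corollary~\ref{xsy in S^2 and simple then S^2=R}.
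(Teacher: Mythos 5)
Your proposal is correct and matches the paper's intended argument: the paper presents this theorem as ``just Corollary~\ref{xsy in S^2 and simple then S^2=R} with the hypotheses $Z(R)\subsetneq S$ and $dim_{Z(R)}R>4$ standing in for noncommutativity of $S$,'' which is exactly the reduction you carry out via Corollary~\ref{cor of centS in Z}. (One could even skip that detour, since the assumed existence of $x,y\in S$ with $xy-yx\neq 0$ already makes $S$ noncommutative, but your route is the one the paper takes.)
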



We have seen in corollary \ref{xsy in S^2 and simple then S^2=R} that if $R$ is a simple associative unital $F$-algebra with $\Char(F)\neq 2$, $*$ is an involution on $R$ of the first kind, $S \subsetneq R$ and there exist $x,y \in S$ such that $xy-yx \neq 0$ and for every $s \in S$, $xsy \in S^2$, then $S^2=R$.
However, one can give other criteria for $S^2=R$. We shall bring now another possible criterion for $S^2$ to equal $R$.
Notice that we continue to assume that $S$ is not a commutative set. Also notice that in the following criterion the particular $x,y \in S$ satisfy $xy+yx \neq 0$.
We can follow the same path as in the first criterion, namely: starting with two lemmas (similar to Lemma \ref{first lemma xsy in S^2 then (xy-yx)r in S^2} and Lemma \ref{second lemma xsy in S^2 then r(xy-yx)t in S^2}), then bringing a theorem and its corollary (similar to Theorem \ref{xsy in S^2 then two sided in S^2} and Corollary \ref{xsy in S^2 and simple then S^2=R}) and finally bringing the criterion (similar to Theorem \ref{first criterion}).
However, for the sake of brevity, we bring all the results in one theorem, namely the second possible criterion.

\begin{thm}[A second possible criterion for a positive answer to Herstein's question]\label{second criterion} 
Let $R$ be a simple associative unital $F$-algebra with $\Char(F)\neq 2$. Let $*$ be an involution on $R$ of the first kind and $S \subsetneq R$. Assume $S$ is not a commutative set.
Then: there exist $x,y \in S$ such that $xy+yx \neq 0$ and for every $k \in K$, $xky \in S^2$ $\Leftrightarrow$ $S^2=R$.
\end{thm}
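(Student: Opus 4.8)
The plan is to mirror the structure of the first criterion (Theorem \ref{first criterion}) but with the roles of symmetric and skew products adjusted to account for the hypothesis $xy+yx\neq 0$ and $xKy\subseteq S^2$. The backward direction ($\Leftarrow$) is essentially free and should be handled first: if $S^2=R$, then since $S$ is noncommutative there exist $x,y\in S$ with $xy-yx\neq 0$; but I actually need $xy+yx\neq 0$, so I would argue that one can always find such a pair (if $xy+yx=0$ for all $x,y\in S$, then in particular $2x^2=0$, contradicting $\Char(F)\neq 2$ together with $1\in S$). Given such $x,y$, for any $k\in K$ we have $xky\in R=S^2$ trivially, so the right-hand side forces the left.

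The forward direction ($\Rightarrow$) is where the real work lies, and I would reduce it to the same two-lemma-plus-ideal argument used for the first criterion. First I would prove the analog of Lemma \ref{first lemma xsy in S^2 then (xy-yx)r in S^2}: assuming $xKy\subseteq S^2$, show that the right ideal $(xy+yx)R\subseteq S^2$. The trick is to consider, for $r\in R$ with $r=s+k$, the expression $rxy - xyr^*$ (the sign change from the first criterion's $rxy+xyr^*$ is the key adaptation, paralleling the remark after Lemma \ref{second lemma r(xy-yx)t in S^3} that one passes from $-$ to $+$ by flipping signs and inserting a $*$). Writing $rxy - xyr^* = (rxy - yxr^*) + (yx - xy)r^*$ or a similar regrouping, I would isolate a piece lying in $S$ and a piece of the form $(xy+yx)r^*$, while the leftover decomposes using $r=s+k$ into terms $sxy+xys$ (handled as before via $S^2$) and $kxy - xyk$, where the hypothesis $xKy\subseteq S^2$ is exactly what I need to absorb the cross term $xky$. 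The precise signs must be checked so that the skew part produces $xKy$ rather than $xSy$.

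Second, I would prove the analog of Lemma \ref{second lemma xsy in S^2 then r(xy-yx)t in S^2}: that $S^2$ is a Lie ideal (the claim $wu-uw\in S^2$ for $w\in S^2$, which is already established there and uses only $a(bu+u^*b)-(ua+au^*)b$ and needs no sign adjustment), and then combine it with the first new lemma to conclude $R(xy+yx)R\subseteq S^2$. Since $xy+yx\neq 0$ and $R$ is simple, the two-sided ideal $R(xy+yx)R$ is nonzero, hence equals $R$, giving $S^2=R$. For the other half of the forward implication — showing that if no such pair exists then $S^2\neq R$ — I would argue by contradiction exactly as in the proof of Theorem \ref{first criterion} and Remark \ref{remark possible criterion}: if for every noncommuting candidate the inclusion fails, one gets an element of $S^3\setminus S^2$, so $S^2\subsetneq S^3=R$ by Corollary \ref{simple S^3=R}.

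The main obstacle I anticipate is getting the signs right in the regrouping identity for the first new lemma. In the first criterion the symmetric combination $rxy+xyr^*$ was symmetric under $*$ and the skew part of $r$ produced a commutator $kx-xk\in S$; here I must instead engineer the combination so that $(xy+yx)r^*$ appears (requiring the Jordan rather than Lie combination of $x,y$) and so that the $k$-part yields $xky$ — an element that is genuinely in $K$ sandwiched between $x$ and $y$ — thereby letting the hypothesis $xKy\subseteq S^2$ do its job. Once this single identity is verified, the rest is a routine transcription of the first-criterion argument, with simplicity supplying the final step.
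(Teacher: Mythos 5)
Your proposal follows the paper's proof essentially verbatim: the paper sets $\alpha(r)=rxy-xyr^{*}$ and $\beta(r)=rxy+yxr^{*}\in S$, so that $-(xy+yx)r^{*}=\alpha-\beta$, verifies $\alpha=(rx+xr^{*})y-x(yr^{*}+ry)-x(r^{*}-r)y\in S^{2}$ using $xKy\subseteq S^{2}$ only on the last term (this is the sign bookkeeping you flagged, and it does work out), and then concludes via the Lie-ideal property of $S^{2}$ and simplicity exactly as you describe, with the backward direction being the same $x=y=1$ observation. The only caveat is that your redundant ``other half'' argument for the converse would not transfer from the first criterion, since $xky$ with $k\in K$ lies in $SKS$ rather than $S^{3}$, so Corollary \ref{simple S^3=R} is not applicable there; but the direct argument you give first already settles that direction.
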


Observe that reading the proof reveals that it is enough to demand that $xT_0y \subseteq S^2$ which is a little weaker condition then $xKy \subseteq S^2$.
However, in Lemma \ref{first lemma xsy in S^2 then (xy-yx)r in S^2} we could not say that reading the proof reveals that it is enough to demand that $xTy \subseteq S^2$ (instead of $xSy \subseteq S^2$).

\begin{proof}
\bit
\item $\Rightarrow$: Assume that there exist $x,y \in S$ such that $xy+yx \neq 0$ and for every $k \in K$, $xky \in S^2$. We must show that $S^2=R$.
For every $r \in R$, $rxy-xyr^*=rxy+yxr^*-yxr^*-xyr^*$.
Let $\alpha=\alpha(r)=rxy-xyr^*$, $\beta=\beta(r)=rxy+yxr^*$. We will just write $\alpha$ and $\beta$, instead of $\alpha(r)$ and $\beta(r)$. So for every $r \in R$, $\alpha=\beta-(xy+yx)r^*$.
Claim 1: $\alpha \in S^2$, $\beta \in S$.
Indeed, obviously $\beta \in S$, since $(\beta)^*=(rxy+yxr^*)*=yxr^*+rxy=\beta$.
As for $\alpha$: $\alpha=rxy-xyr^*=(rx+xr^*)y-x(yr^*+ry)-xr^*y+xry=(rx+xr^*)y-x(yr^*+ry)-x(r^*-r)y$.
Clearly, $rx+xr^*, y, -x, yr^*+ry \in S$. $r^*-r \in K$, so $x(r^*-r)y \in S^2$ (just by our assumption that $xKy \subseteq S^2$). Therefore, $\alpha=rxy-xyr^*=(rx+xr^*)y-x(yr^*+ry)-x(r^*-r)y \in S^2$.
Using the claim we get: for every $r \in R$, $-(xy+yx)r^*=\alpha-\beta \in S^2$ (remember that $S \subseteq S^2$). But $R=R^*$ (since $r=(r^*)^*$), so for every $r \in R$, $-(xy+yx)r \in S^2$. Then, obviously, for every $r \in R$, $(xy+yx)r \in S^2$
(Of course, from the first place we could take $-r^*$ instead of $r$, and get, without using $R=R^*$, that $(xy+yx)r \in S^2$).
Therefore, $(xy+yx)R \subseteq S^2$.
Claim 2: For every $u \in R$ and $w \in S^2$, $wu-uw \in S^2$ (in other words, $S^2$ is a Lie ideal of $R$).
We have already seen this claim in the proof of Lemma \ref{second lemma xsy in S^2 then r(xy-yx)t in S^2}.
Next, for every $r \in R$, $(xy+yx)r \in S^2$, so from the claim just seen (with $w=(xy+yx)r$), for every $u \in R$ and for every $r \in R$, $((xy+yx)r)u-u((xy+yx)r) \in S^2$.
Use again $(xy+yx)R \subseteq S^2$ to get that (for every $u \in R$ and for every $r \in R$) $((xy+yx)r)u=(xy+yx)ru \in S^2$.
Therefore, for every $u \in R$ and for every $r \in R$, $-u((xy+yx)r) \in S^2$. So $R(xy+yx)R \subseteq S^2$.
By assumption, $xy+yx \neq 0$, so $R(xy+yx)R$ is a non-zero two-sided ideal of $R$ ($0 \neq xy+yx=1(xy+yx)1 \in R(xy+yx)R$).
$R$ is simple, so $R(xy+yx)R=R$. But we have just seen that $R(xy+yx)R \subseteq S^2$, hence $R=S^2$.
\item $\Leftarrow$: Assume that $S^2=R$. Let $x=y=1$. So we have found $x,y \in S$ such that $xy+yx=2 \neq 0$ and for every $k \in K$, $(k=1k1=)xky \in S^2$ ($S^2=R$ so every element $r \in R$ is in $S^2$, in particular, for every $k \in K$, $xky \in R=S^2$.
\eit
\end{proof}

Similarly to Example \ref{section S^2: example matrices}, we have here:

\begin{example}\label{example for second criterion}

Let $R=\M_n(F)$, where $F$ is a field of characteristic not equal to $2$ and $n>1$.
Theorem \ref{second criterion} can be applied for both the transpose involution and the symplectic involution on $\M_n(F)$, thus showing that in each of these two cases $S^2=R$. Indeed;\bit
\item The transpose involution: Let $n \geq 2$. Take $x=e_{11}$, $y=e_{12}+e_{21}$.
One sees that $x,y \in S$ and $xy+yx=e_{11}(e_{12}+e_{21})+(e_{12}+e_{21})e_{11}=e_{12}+e_{21}=y \neq 0$.
A direct computation shows that if $K \ni (a_{ij})=k=ae_{12}-ae{21}+\ldots$, then $xky=ae_{11}$.
Indeed, $x(ae_{12}-e{21}+\sum_{j\neq 2}a_{1j}+\sum_{j \neq 1}a_{2j}+\sum_{i>3}\sum{j}a_{ij})y=
(e_{11}(ae_{12}-e{21}+\sum_{j\neq 2}a_{1j}e_{1j}+\sum_{j \neq 1}a_{2j}e_{2j}+\sum_{i>3}\sum{j}a_{ij}e_{ij}))(e_{12}+e_{21})=
(ae_{11}e_{12}-e_{11}e_{21}+e_{11}\sum_{j\neq 2}a_{1j}e_{1j}+e_{11}\sum_{j \neq 1}a_{2j}e_{2j}+e_{11}\sum_{i>3}\sum{j}a_{ij}e_{ij})(e_{12}+e_{21})=
(ae_{12}+\sum_{j\neq 2}a_{1j}e_{11}e_{1j})(e_{12}+e_{21})=
(ae_{12}+\sum_{j \neq 2}a_{1j}e_{1j})(e_{12}+e_{21})=
ae_{12}e_{12}+ae_{12}e_{21}+\sum_{j \neq 2}a_{1j}e_{1j}e_{12}+\sum_{j \neq 2}a_{1j}e_{1j}e_{21}=
ae_{11}+a_{11}e_{11}e_{12}=ae_{11}+a_{11}e_{12}=ae_{11}+0e_{12}=ae_{11}$ (of course $a_{11}=0$ since $(a_{ij})=k \in K$).
Clearly, $ae_{11} \in S \subseteq S^2$, so $xKy \subseteq S^2$. Therefore, Theorem \ref{second criterion} can be applied.
\item The symplectic involution: Let $n=2m \geq 4$ (remember that when $n=2$, $S$ is commutative, actually $S \cong F$, so $S^2=S \subsetneq R$).
Take $x=e_{1,m+2}-e_{2,m+1}$, $y=e_{11}+e_{m+1,m+1}$. One sees that $x,y \in S$.

$xy=(e_{1,m+2}-e_{2,m+1})(e_{11}+e_{m+1,m+1})=
e_{1,m+2}e_{11}+e_{1,m+2}e_{m+1,m+1}-e_{2,m+1}e_{11}-e_{2,m+1}e_{m+1,m+1}=
-e_{2,m+1}e_{m+1,m+1}=-e_{2,m+1}$.

$yx=(e_{11}+e_{m+1,m+1})(e_{1,m+2}-e_{2,m+1})=e_{11}e_{1,m+2}=e_{1,m+2}$.

Hence, $xy+yx=-e_{2,m+1}+e_{1,m+2} \neq 0$.
A direct computation shows that if $K \ni (a_{ij})=k$, 
then $xky=a_{m+2,1}e_{11}-a_{m+1,1}e_{21}+a_{m+2,m+1}e_{1,m+1}-a_{m+1,m+1}e_{2,m+1}$.
Each of the four terms is in $S^2$:
\bit
\item $e_{11}$: We only show that for $n=4$ $e_{11} \in S^2$.
$e_{11}=(1/4)(A+B+2C-D)$, where $A=e_{11}-e_{22}+e_{33}-e_{44}$, $B=e_{11}+e_{22}$, $C=e_{11}+e_{44}$, $D=e_{33}+e_{44}$.
Indeed, $(1/4)[(e_{11}-e_{22}+e_{33}-e_{44})+(e_{11}+e_{22})+2(e_{11}+e_{44})-(e_{33}+e_{44})]=
(1/4)[e_{11}-e_{22}+e_{33}-e_{44}+e_{11}+e_{22}+2e_{11}+2e_{44}-e_{33}-e_{44}]=(1/4)[4e_{11}]=e_{11}$.
It is not difficult to see that $A,B,C,D \in S^2$:
\bit
\item $A \in S$.
\item $(e_{14}-e_{23})(e_{41}-e_{32})=e_{14}e_{41}-e_{14}e_{32}-e_{23}e_{41}+e_{23}e_{32}=e_{11}+e_{22}=B$.
\item $(e_{12}+e_{43})(e_{12}+e_{21}+e_{34}+e_{43})=e_{12}e_{12}+e_{12}e_{21}+e{12}e_{34}+e_{12}e_{43}+e_{43}e_{12}+e_{43}e_{21}+e_{43}e_{34}+e_{43}e_{43}=e_{11}+e_{44}=C$.
\item $(e_{41}-e_{32})(e_{14}-e_{23})=e_{41}e_{14}-e_{41}e_{23}-e_{32}e_{14}+e_{32}e_{23}=e_{44}+e_{33}=D$.
\eit
\item (For any $n=2m \leq 4$): $(e_{22}+e_{m+2,m+2})(e_{21}+e_{m+1,m+2})=e_{22}e_{21}+e_{22}e_{m+1,m+2}+e_{m+2,m+2}e_{21}+e_{m+2,m+2}e_{m+1,m+2}=
e_{22}e_{21}=e_{21}$.
\item (For any $n=2m \leq 4$): $e_{1,m+1}, e_{2,m+1}$: we have already seen in Example \ref{section S^2: example matrices} that $e_{1,m+1},e_{2,m+1} \in S^2$. Just for convenience we show this again:
$(e_{1,m+2}-e_{2,m+1})(e_{12}+e_{m+2,m+1})=e_{1,m+2}e_{12}+e_{1,m+2}+e_{m+2,m+1}-e_{2,m+1}e_{12}-e_{2,m+1}e_{m+2,m+1}=e_{1,m+1}$.
 $(e_{22}+e_{m+2,m+2})(e_{1,m+2}-e_{2,m+1})=e_{22}e_{1,m+2}-e_{22}e_{2,m+1}+e_{m+2,m+2}e_{1,m+2}-e_{m+2,m+2}e_{2,m+1}=-e_{2,m+1}$.
\eit
So $xKy \subseteq S^2$. Therefore, Theorem \ref{second criterion} can be applied.
\eit
\end{example}

We do not bother to bring a result when $R$ is not necessarily a simple ring, but there exist $x,y \in S$ such that $xy+yx$ is right invertible and for every $k \in K$, $xky \in S^2$ (similar to Proposition \ref{section S^2: xy-yx is invertible}) or to bring a result when $R$ is simple and $dim_{Z(R)}R > 4$ (similar to Corollary \ref{first criterion with dim>4}).
It is not difficult to complete the details and have such results.

\section{Other results}

One can continue in the spirit of the theorems seen thus far and get further results concerning $S$ and $K$.
$R$ will continue to denote an associative $F$-algebra with $1$ and with an involution $*$ such that:
\bit
\item $F$ is a field of characteristic not equal to $2$ (hence $R=S+K$).
\item The involution $*$ is of the first kind, namely, $Z(R) \subseteq S$ (hence $S \subseteq S^2 \subseteq S^3$ etc.).
\item $S \subsetneq R$.
(Remember that if $S=R$, then $S^3 \supseteq S^2 \supseteq S=R$ so $S^3=R$ and $S^2=R$. We on purpose wrote separately $S^3=R$ and $S^2=R$ in order to emphasize what happens for $S^3$ and what happens for $S^2$).
\eit
However, in this section we shall consider subsets of $R$ other then $S^3$ and $S^2$:
\bit
\item [(i)] In the first subsection we shall mainly consider $K^6$, $K^4$, $K+KSK$ and $KS+K^2$.
The results about $K^6$ and $K^4$ follow immediately from results of Herstein and from our results.
For $K+KSK$ to equal $R$, we assume that $K$ is not a skew-commutative set.
For $KS+K^2$ (or $SK+K^2$) to equal $R$, we assume that there exist $x,y \in K$ such that $xy+yx \neq 0$ and $xSy \subseteq K^2$ (so in particular we assume that $K$ is not a skew-commutative set).
Notice the similarity to the second criterion which says that if there exist $x,y \in S$ such that $xy+yx \neq 0$ and $xKy \subseteq S^2$, then $S^2=R$.
Another option for $KS+K^2$ to equal $R$ is when there exist $x \in S$ and $y \in K$ such that $xy-yx \neq 0$ and $xSy \subseteq K^2$.
We also consider $K+K^2$, $K+K^2+K^3$ and $K+K^3$.
\item [(ii)] In the second subsection, in analogy to $S^3$, we shall consider $SKS$ and $K+KS+SK \subseteq SKS$.
Notice that if $S=R$, then (from $S \cap K =0$) $K=0$, hence $SKS=0 \subsetneq R$ (in contrast to $S^3=R$), so there is nothing interesting to say about $SKS$ or $K+KS+SK$ (both are zero).
\item [(iii)] In the third subsection, again in analogy to $S^3$, we shall consider $S^2K$ (and $SK \subseteq S^2K$) and $KS^2$ (and $KS \subseteq KS^2$).
Notice that if $S=R$ then $K=0$, hence $S^2K=0 \subsetneq R$ and $KS^2=0 \subsetneq R$ (in contrast to $S^3=R$), so there is nothing interesting to say.
\eit
\subsection{Results mostly about \texorpdfstring{$K^6$, $K^4$, $K+KSK$, $KS+K^2$}{K6, K4, K+KSK, KS+K2}}
\subsubsection{An almost analog theorem for $K^6$ and $K^4$}

We give a result dealing exclusively with $K$ instead of $S$, which follows immediately from two theorems of Herstein.
Of course, when we have dealt with $S$ we had $S \subseteq S^2\subseteq S^3$, however for $K$ we usually do not have such a chain of inclusions (Notice that in $\M_2(F)$ with the symplectic involution, $K \subsetneq K^2=\M_2(F)$, so we do have a chain of inclusions).
A combination of Theorem 2.2 and Theorem 2.3 of Herstein (see \cite{her1}) implies the next theorem.
Observe that in Herstein's Theorem 2.2 and Theorem 2.3:
\bit
\item There is no demand for $S$ to be a noncommutative set.
\item $S \subsetneq R$ is an implicit demand, since if $S=R$, then from $S \cap K =0$, one gets $K=0$, so $\bar{K}=0 \subsetneq R$.
\eit

\begin{thm}[Herstein]\label{herstein} 
Let $R$ be a simple associative unital $F$-algebra 

with $\Char(F)\neq 2$ and $dim_{Z(R)}R > 4$. 
Let $*$ be an involution on $R$ of the first kind and $S \subsetneq R$. Then, $R=K+(K\circ K)$.
\end{thm}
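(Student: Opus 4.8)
The plan is to reduce the statement to the single assertion $R = K + K^2$ and then read this off from Herstein's structure theory for the skew elements. The first observation is purely formal and uses only $\Char(F) \neq 2$: I claim that $K + (K \circ K) = K + K^2$. Since $a \circ b = ab + ba$ is a sum of two products from $K$, we have $K \circ K \subseteq K^2$, hence $K + (K \circ K) \subseteq K + K^2$. For the reverse inclusion, note that for $a, b \in K$ one computes $(ab + ba)^* = b^* a^* + a^* b^* = ab + ba$ and $(ab - ba)^* = -(ab - ba)$, so $a \circ b \in S$ while $[a,b] := ab - ba \in K$. Writing $ab = \tfrac12 (a \circ b) + \tfrac12 (ab - ba)$, which is legitimate because $2$ is invertible, exhibits every generator of $K^2$ as an element of $(K \circ K) + K$; thus $K^2 \subseteq K + (K \circ K)$ and the two additive subgroups coincide.

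Granting this identity, it remains to prove $R = K + K^2$, equivalently that the additive subgroup spanned by $K$ together with all products of pairs of skew elements is the whole ring. This is exactly the information packaged in Herstein's Theorems 2.2 and 2.3 of \cite{her1}. In outline, one first shows (Theorem 2.2) that under the present hypotheses, namely $R$ simple, $*$ of the first kind, $\Char(F) \neq 2$ and $\dim_{Z(R)} R > 4$, the subring $\bar K$ generated by $K$ is all of $R$; one then shows (Theorem 2.3) that this subring is already attained at length two, i.e.\ $\bar K = K + K^2$, so that no higher powers $K^3, K^4, \dots$ are required. Combining the two gives $R = \bar K = K + K^2 = K + (K \circ K)$, and since both statements are available in \cite{her1} I would simply cite them.

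The genuinely hard part lies entirely inside Herstein's two theorems, and it proceeds through the Lie structure of $K$ rather than through the two-sided-ideal argument we used for $S^3$ and $S^2$. The earlier method does not transcribe directly because, unlike $S$, the set $K$ contains no unit (indeed $1 \in Z(R) \subseteq S$) and the subring $\bar K$ need not be an ideal, so simplicity cannot be applied to $\bar K$ itself. Instead one exploits that $[K,K] \subseteq K$ makes $K$ a Lie object whose associated ideals can be controlled; the bound $\dim_{Z(R)} R > 4$ is precisely what excludes the small degenerate cases, such as $\M_2(F)$ with the transpose involution, where $K$ is one-dimensional and $K + K^2$ is a proper subspace, and it forces $\bar K$ to contain a nonzero two-sided ideal, which by simplicity is all of $R$, whence $\bar K = R$. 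The delicate collapse to length two recorded in Theorem 2.3 is the step I would expect to be the main obstacle were one to attempt a reproof from scratch.
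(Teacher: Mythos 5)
Your proposal is correct and follows essentially the same route as the paper, which likewise proves this theorem only by citing Theorems 2.2 and 2.3 of Herstein's \emph{Topics in Ring Theory} (that $\bar{K}=R$ when $\dim_{Z(R)}R>4$ and that $\bar{K}$ closes up at length two). Your preliminary identity $K+(K\circ K)=K+K^2$, obtained from $ab=\tfrac12(a\circ b)+\tfrac12[a,b]$ with $a\circ b\in S$ and $[a,b]\in K$ for $a,b\in K$, is a correct (and harmless) addition that the paper leaves implicit.
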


If one wishes to express $R$ as $K^n$ for some $n \geq 2$ instead of $K+(K\circ K)$ ($K+(K\circ K)$ is not contained in $K^2$, only $K \circ K$ is contained in $K^2$), then one may express $R$ as follows:

\begin{thm}\label{from Herstein 2.2 and 2.3}
Let $R$ be a simple associative unital $F$-algebra with $\Char(F)\neq 2$. Let $*$ be an involution on $R$ of the first kind and $S \subsetneq R$. Assume $S$ is not a commutative set. Then,
\bit
\item $K^6=R$, or more specifically $(K\circ K)^3=R$.
\item If, in addition, exist $x,y \in S$ such that $xy-yx \neq 0$ and for every $s \in S$, $xsy \in S^2$, then $K^4=R$, or more specifically $(K \circ K)^2=R$.
(If, in addition, exist $x,y \in S$ such that $xy+yx \neq 0$ and for every $k \in K$, $xky \in S^2$, then $K^4=R$, or more specifically $(K \circ K)^2=R$).
\eit
\end{thm}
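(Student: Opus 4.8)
The plan is to reduce both assertions to the single identity $K \circ K = S$, after which everything drops out of the $S^2$ and $S^3$ results already established. The starting point is Herstein's theorem, Theorem \ref{herstein}, which supplies $R = K + (K \circ K)$.

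First I would record the elementary inclusion $K \circ K \subseteq S$: for $k,l \in K$ one has $(kl+lk)^* = l^*k^* + k^*l^* = (-l)(-k)+(-k)(-l) = lk+kl$, so $kl+lk \in S$. Since $\Char(F) \neq 2$ we also have the internal decomposition $R = S \oplus K$ with $S \cap K = 0$. Combining these with Herstein's theorem is the heart of the matter: given $s \in S$, write $s = k' + m$ with $k' \in K$ and $m \in K \circ K \subseteq S$; then $k' = s - m \in S \cap K = 0$, whence $s = m \in K \circ K$. Thus $S \subseteq K \circ K$, and together with the inclusion above this yields $K \circ K = S$.

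For the first assertion, once $K \circ K = S$ is in hand I would compute $(K \circ K)^3 = S^3$, which equals $R$ by Corollary \ref{simple S^3=R} (this is where the hypothesis that $S$ is not commutative enters). Since each generator $kl+lk$ lies in $K^2$, we have $K \circ K \subseteq K^2$, so $R = (K \circ K)^3 \subseteq K^6 \subseteq R$, forcing $K^6 = R$. For the second assertion, I would observe that each of the two extra hypotheses is precisely the left-hand side of one of the criteria: Theorem \ref{first criterion} (respectively Theorem \ref{second criterion}) converts it into $S^2 = R$. Then $(K \circ K)^2 = S^2 = R$, and again $R = (K \circ K)^2 \subseteq K^4 \subseteq R$ gives $K^4 = R$.

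The one delicate point, and the step I expect to be the real obstacle, is the appeal to Herstein's theorem that underlies $K \circ K = S$. Theorem \ref{herstein} carries the hypothesis $dim_{Z(R)}R > 4$, so the derivation of $K \circ K = S$ genuinely relies on that dimension restriction; indeed for $\M_2(F)$ with the transpose involution (where $dim_{Z(R)}R = 4$) one has $K = F(e_{12}-e_{21})$ and $K \circ K = F\cdot 1 \subsetneq S$, so the identity $K \circ K = S$ -- and hence the conclusion -- fails there. Everything past that identity is a routine consequence of the previously proved equalities $S^3 = R$ and $S^2 = R$, so the whole argument stands or falls with securing $K \circ K = S$ from Herstein's result.
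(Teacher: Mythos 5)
Your argument is the same as the paper's: the paper's proof likewise reads off $S=K\circ K$ from Herstein's Theorems 2.2 and 2.3 and then concludes $K^6\supseteq (K\circ K)^3=S^3=R$ and $K^4\supseteq (K\circ K)^2=S^2=R$ via Corollary \ref{simple S^3=R}, Corollary \ref{xsy in S^2 and simple then S^2=R} and Theorem \ref{second criterion}. Your explicit derivation of $S=K\circ K$ from $R=K+(K\circ K)$ together with $K\circ K\subseteq S$ and $S\cap K=0$ is a small but correct elaboration of a step the paper leaves implicit. The ``delicate point'' you flag is real and is in fact a defect of the statement itself, not merely of the proof: Theorem \ref{herstein} requires $\dim_{Z(R)}R>4$, but Theorem \ref{from Herstein 2.2 and 2.3} as stated omits this hypothesis, and your counterexample is decisive --- in $\M_2(F)$ with the transpose involution all the stated hypotheses hold ($S$ is noncommutative, and by Example \ref{section S^2: example matrices} even the extra hypothesis of the second bullet holds), yet $K=F(e_{12}-e_{21})$ gives $K^2=F\cdot 1$, hence $K^6=K^4=F\cdot 1\subsetneq R$. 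So the theorem needs the additional hypothesis $\dim_{Z(R)}R>4$ (as in the theorem that immediately follows it in the paper); with that hypothesis added, your proof is complete and correct.
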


\begin{proof}
Use Theorem 2.2 and Theorem 2.3 of Herstein (see \cite{her1}) (or the above Theorem \ref{herstein}) and get that in our $R$, $S=K \circ K$.
\bit
\item From Corollary \ref{simple S^3=R} $S^3=R$, so $K^6=(K^2)^3 \supseteq (K \circ K)^3=S^3=R$.
\item From Corollary \ref{xsy in S^2 and simple then S^2=R} $S^2=R$, so $K^4=(K^2)^2 \supseteq (K \circ K)^2=S^2=R$.
(From Theorem \ref{second criterion} $S^2=R$, so $K^4=(K^2)^2 \supseteq (K \circ K)^2=S^2=R$).
\eit
\end{proof}

Using Corollary \ref{cor of centS in Z}, Theorem \ref{from Herstein 2.2 and 2.3} can be written as:

\begin{thm}
Let $R$ be a simple associative unital $F$-algebra with $\Char(F)\neq 2$. Let $*$ be an involution on $R$ of the first kind with $Z(R) \subsetneq S$ (not just $Z(R)\subseteq S$) and $S \subsetneq R$. Assume that $dim_{Z(R)}R > 4$.
Then, \bit
\item $K^6=R$, or more specifically $(K\circ K)^3=R$.
\item If, in addition, exist $x,y \in S$ such that $xy-yx \neq 0$ and for every $s \in S$, $xsy \in S^2$, then $K^4=R$, or more specifically $(K \circ K)^2=R$.
(If, in addition, exist $x,y \in S$ such that $xy+yx \neq 0$ and for every $k \in K$, $xky \in S^2$, then $K^4=R$, or more specifically $(K \circ K)^2=R$).
\eit
\end{thm}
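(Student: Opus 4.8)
The final statement is a reformulation of Theorem \ref{from Herstein 2.2 and 2.3}, with the hypothesis "$S$ is not a commutative set" replaced by the stronger-looking pair of hypotheses "$Z(R) \subsetneq S$" together with "$dim_{Z(R)}R > 4$". The plan is therefore to reduce this new theorem to the one already proved by showing that, under the new hypotheses, $S$ is automatically a noncommutative set; after that the conclusion is immediate.

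First I would invoke Corollary \ref{cor of centS in Z}, which applies precisely because $R$ is a simple algebra with $\Char(F) \neq 2$, the involution $*$ is of the first kind, $S \subsetneq R$, and $dim_{Z(R)}R > 4$. That corollary gives the equivalence: $S$ is a commutative set $\Leftrightarrow$ $Z(R) = S$. Since our new hypothesis is the strict inclusion $Z(R) \subsetneq S$, we have $Z(R) \neq S$, and hence the right-hand side of the equivalence fails; therefore $S$ is not a commutative set. This is the one substantive step, and it is the same reduction already carried out in the proof of Theorem \ref{simple with dim > 4} and in the $\Leftarrow$ direction of Corollary \ref{first criterion with dim>4}.

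With "$S$ is not a commutative set" now established, every remaining hypothesis of Theorem \ref{from Herstein 2.2 and 2.3} is in force: $R$ is a simple unital $F$-algebra with $\Char(F) \neq 2$, $*$ is an involution of the first kind, and $S \subsetneq R$. I would simply apply that theorem verbatim. Its first bullet yields $K^6 = R$, and more specifically $(K \circ K)^3 = R$; its second bullet yields, under either of the two additional product conditions (the $xy - yx \neq 0$ with $xSy \subseteq S^2$ condition, or the $xy + yx \neq 0$ with $xKy \subseteq S^2$ condition), that $K^4 = R$, and more specifically $(K \circ K)^2 = R$. These additional conditions are passed through unchanged, so no further work on them is needed.

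I do not anticipate any real obstacle here: the entire content is the observation that the dimension hypothesis plus $Z(R) \subsetneq S$ forces $S$ to be noncommutative via Herstein's theorem (Theorem \ref{centS in Z}) as packaged in Corollary \ref{cor of centS in Z}. The only point requiring a moment's care is to confirm that the dimension bound $dim_{Z(R)}R > 4$ is genuinely needed, since Corollary \ref{cor of centS in Z} itself rests on Herstein's Theorem \ref{centS in Z}, whose conclusion $\Cent(S) \subseteq Z(R)$ can fail when $dim_{Z(R)}R = 4$; this is exactly why the noncommutativity conclusion cannot be extracted from $Z(R) \subsetneq S$ alone without the dimension assumption. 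Once that reduction is in place, the proof is a one-line appeal to Theorem \ref{from Herstein 2.2 and 2.3}.
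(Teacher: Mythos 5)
Your proposal is correct and matches the paper's intent exactly: the paper introduces this theorem as Theorem \ref{from Herstein 2.2 and 2.3} rewritten via Corollary \ref{cor of centS in Z}, and its one-line proof cites Theorem \ref{simple with dim > 4} and Theorem \ref{simple with dim and more}, which are themselves established by precisely the reduction you describe (deducing noncommutativity of $S$ from $Z(R) \subsetneq S$ and $dim_{Z(R)}R > 4$ via Corollary \ref{cor of centS in Z}, then invoking the earlier $S^3=R$ and $S^2=R$ results together with Herstein's $S = K \circ K$). No gaps; this is essentially the same argument.
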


\begin{proof}
\bit
\item Use Theorem \ref{simple with dim > 4}.
\item Use Theorem \ref{simple with dim and more}.
(Clear).
\eit
\end{proof}

\begin{remark} [$K^2=R$ and $dim_{Z(R)}R=4$]
In Theorem \ref{from Herstein 2.2 and 2.3} we have assumed that $dim_{Z(R)}R > 4$ and got $K^6=R$ or $K^4=R$.
For $dim_{Z(R)}R=4$, already $K^2$ may equal $R$:
\bit
\item $R=\H=\R+\R i+\R j+\R k$. $S=\R$, $K=\R i+\R j+\R k$. $K^2=\H$, because:
\bit
\item $S \subseteq K^2$: $-1=ii \in K^2$, so for every $\epsilon \in S=\R$, $\epsilon=(-\epsilon)(-1) \in K^2$ ($K^2$ is a vector space over $\R$).
\item $K \subseteq K^2$: $i=jk \in K^2$, $j=ki \in K^2$, $k=ij \in K^2$, so since $K^2$ is a vector space over $\R$, $K=\R i+\R j+\R k \in K^2$.
\eit
So, $\H= S+K \subseteq K^2$, hence $\H =K^2$.
\item $R=\M_2(F)$ with the symplectic involution. $S=\{a(e_{11}+e_{22}) | a \in F \}$,
$K=\{a(e_{11}-e_{22})+be_{12}+ce_{21} | a,b,c \in F \}$ (in other words, the matrices with zero traces).
A computation shows that $K^2=R$; Indeed,
\bit
\item $e_{12}e_{21}=e_{11}$. $e_{12},e_{21} \in K$, so $e_{11} \in K^2$.
\item $(e_{11}-e_{22})e_{12}=e_{11}e_{12}-e_{22}e_{12}=e_{12}$. $(e_{11}-e_{22}), e_{12} \in K$, so $e_{12} \in K^2$.
\item $(e_{11}-e_{22})(-e_{21})=-e_{11}e_{21}+e_{22}e_{21}=e_{21}$. $(e_{11}-e_{22}), -e_{21} \in K$, so $e_{12} \in K^2$.
\item $e_{21}e_{12}=e_{22}$. $e_{21}, e_{12} \in K$, so $e_{22} \in K^2$.
\eit
\eit
\end{remark}


\subsubsection{Results about $K+KSK$}

Herstein's theorem says that if $R$ is a simple associative unital $F$-algebra with $\Char(F)\neq 2$, $dim_{Z(R)}R > 4$, $*$ is an involution on $R$ of the first kind and $S \subsetneq R$, then $R=K+(K\circ K)$ (hence $R=K+K^2$, since $R=K+(K\circ K) \subseteq K+K^2$).
We shall bring a weaker version of Herstein's theorem, namely Theorem \ref{R simple xy+yx neq 0 then R=K+KSK}.
In Theorem \ref{R simple xy+yx neq 0 then R=K+KSK} we will only get that $R=K+KSK$, so our result is weaker then Herstein's in the sense that we show that $R$ equals $K+KSK$ which contains $K+K^2 \supseteq K+(K \circ K)$.
Notice that we will not assume that $S$ is not a commutative set.
Also notice that in Theorem \ref{R simple xy+yx neq 0 then R=K+KSK} we will not assume that $dim_{Z(R)}R > 4$, but we will assume that there exist $x,y \in K$ such that $xy+yx \neq 0$, namely, $K$ is not a skew-commutative set.


\begin{remark}
Actually, if $dim_{Z(R)}R > 4$ (and, as usual, $\Char(F) \neq 2$) then $K$ is necessarily not a skew-commutative set.
This result is immediate from Herstein's theorem \ref{herstein}:
Assume that $K$ is skew-commutative. Then for every $a,b \in K$, $(ab)^*=(b^*)(a^*)=(-b)(-a)=ba=-ab$, so $(ab+ba)^*=-ab-ba=-(ab+ba)$. Therefore, (merely by the definition of $K \circ K$ as the $F$-subspace of $R$ generated by $ab+ba$ where $a,b \in K$), $(K \circ K) \subseteq K$.
Now, from Herstein's theorem $R=K+(K\circ K) \subseteq K+K=K$, so $R=K$. But this is impossible, since on the one hand $1 \in R=K$, so $1^*=-1$. On the other hand $1 \in F \subseteq Z(R) \subset S$, so $1^*=1$. Combining the two yields $-1=1$ hence $1+1=0$, but by assumption $\Char(F) \neq 2$.
However, here we wish that our theorems will not rely on Herstein's, so we will assume (also when $dim_{Z(R)}R > 4$) that $K$ is not a skew-commutative set.
Without Herstein's theorem we are only able to show the following:
Let $R$ be an associative $F$-algebra with $\Char(F) \neq 2$. Let $*$ be an involution on $R$ (and $S \subsetneq R$). If $K$ is a skew-commutative set, then for every $k \in K$, $k^2=0$.
Indeed, take $k \in K$. $K$ is skew-commutative, hence, in particular, $kk=-kk$. Therefore $2kk=kk+kk=0$, so $kk=0$ (by assumption, $\Char(F) \neq 2$).
Therefore, if there exists $k \in K$ such that $k^2 \neq 0$, then $K$ is not a skew-commutative set.
This assumption seems reasonable, at least in some special cases, for example if $R$ is also a domain.
Indeed, if $R$ is also a domain, then necessarily exists $k \in K$ such that $k^2 \neq 0$; Otherwise, for every $k \in K$, $k^2=0$. But each $k \neq 0$ is regular, so $K=0$. This implies $S=R$, a contradiction to our usual assumption that $S \subsetneq R$.
\end{remark}


\begin{thm}\label{R simple xy+yx neq 0 then R=K+KSK}
Let $R$ be a simple associative unital $F$-algebra with $\Char(F)\neq 2$. Let $*$ be an involution on $R$ of the first kind and $S \subsetneq R$. Assume that $K$ is not a skew-commutative set. Then $R=K+KSK$.
\end{thm}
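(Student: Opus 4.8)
The plan is to reuse the template that already drove the $S^3$ and $S^2$ results: exhibit a nonzero two-sided ideal of $R$ contained in $K+KSK$, so that simplicity forces $K+KSK=R$. Since $K$ is not skew-commutative, I fix $x,y\in K$ with $w:=xy+yx\neq 0$; here $w\in S$ because $(xy+yx)^*=y^*x^*+x^*y^*=yx+xy=w$, and $w$ will be the generator of the ideal. Two elementary facts get used throughout: because $1\in Z(R)\subseteq S$, one has $K^2\subseteq KSK$ (as $k_1k_2=k_1\cdot 1\cdot k_2$), and in particular $xsy\in KSK$ for every $s\in S$.

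First I would prove the analog of Lemma \ref{first lemma (xy-yx)r in S^3}, namely $wR\subseteq K+KSK$. Following that lemma, for $r\in R$ set $\alpha=rxy+xyr^*$ and $\beta=rxy-yxr^*$; a short check gives $\beta\in K$ and $\alpha-\beta=(xy+yx)r^*=wr^*$. It remains to place $\alpha$ in $K+KSK$. Writing $r=s+k$ with $s\in S$, $k\in K$, one splits $\alpha=(sxy+xys)+(kxy-xyk)$. The commutator part rearranges as $(kx-xk)y-x(yk-ky)$, and since $kx-xk,\ yk-ky\in K$, both summands lie in $K^2\subseteq KSK$. The symmetric part is the delicate one: the identity $sxy+xys=(sx+xs)y+x(ys+sy)-2xsy$ shows it equals a sum of two $K^2$-terms (because $sx+xs,\ ys+sy\in K$) minus $2xsy\in KSK$, hence lies in $KSK$. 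Thus $\alpha\in KSK$ and $wr^*=\alpha-\beta\in K+KSK$; since $R=R^*$, we conclude $wR\subseteq K+KSK$.

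Next I would prove the closure property playing the role of the claim inside Lemma \ref{second lemma r(xy-yx)t in S^3}: for every $z\in K+KSK$ and $u\in R$, $zu+u^*z\in K+KSK$. By additivity it suffices to test generators. For $z=k\in K$ one checks $ku+u^*k\in K$ directly. For $z=k_1sk_2\in KSK$ I would write $k_1sk_2u=k_1s(k_2u+u^*k_2)-k_1su^*k_2$ and $u^*k_1sk_2=(u^*k_1+k_1u)sk_2-k_1usk_2$; since $k_2u+u^*k_2$ and $u^*k_1+k_1u$ lie in $K$, the first terms are in $KSK$, while the leftovers combine to $-k_1(su^*+us)k_2$ with $su^*+us\in S$, again in $KSK$.

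Finally, combining the two exactly as in Lemma \ref{second lemma r(xy-yx)t in S^3}: for $u,r\in R$, apply the closure property to $z=wr\in K+KSK$ to get $(wr)u+u^*(wr)\in K+KSK$; but $(wr)u=w(ru)\in K+KSK$ by the first step, so $u^*wr\in K+KSK$, and $R=R^*$ gives $uwr\in K+KSK$. Hence $RwR\subseteq K+KSK$, and since $w\neq 0$ this is a nonzero two-sided ideal, so simplicity yields $RwR=R$ and therefore $K+KSK=R$. I expect the main obstacle to be the first step, and within it the symmetric summand $sxy+xys$: naively it produces dangling $SK$ and $KS$ factors that need not lie in $K+KSK$, and the whole argument hinges on spotting the symmetrization identity above together with $K^2\subseteq KSK$ to absorb them.
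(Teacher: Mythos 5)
Your proposal is correct and follows essentially the same route as the paper: first show $(xy+yx)R\subseteq K+KSK$, then establish the closure property $zu+u^*z\in K+KSK$, and conclude via simplicity that the nonzero two-sided ideal $R(xy+yx)R$ equals $R$. The only cosmetic difference is in the first step, where you split $r=s+k$ and symmetrize $sxy+xys$ separately, while the paper handles $\alpha=rxy+xyr^*$ directly via $\alpha=(rx+xr^*)y+x(yr^*+ry)-x(r^*+r)y$ with $rx+xr^*,\,yr^*+ry\in K$ and $r^*+r\in S$; both computations are valid.
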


\begin{proof}
$K$ is not a skew-commutative set, hence there exist $x,y \in K$ such that $xy \neq -yx$. Hence $xy+yx \neq 0$.
For every $r \in R$, $rxy+xyr^*=rxy-yxr^*+yxr^*+xyr^*=rxy-yxr^*+(xy+yx)r^*$.
Let $\alpha=\alpha(r)=rxy+xyr^*$, $\beta=\beta(r)=rxy-yxr^*$. We will just write $\alpha$ and $\beta$, instead of $\alpha(r)$ and $\beta(r)$. So for every $r \in R$, $\alpha=\beta+(xy+yx)r^*$.
Claim 1: $\alpha \in KSK$, $\beta \in K$.
Indeed, obviously $\beta \in K$, since $(\beta)^*=(rxy-yxr^*)*=yxr^*-rxy=-(rxy-yxr^*)=-\beta$.
As for $\alpha$: $\alpha=rxy+xyr^*=(rx+xr^*)y+x(yr^*+ry)-xr^*y-xry=(rx+xr^*)y+x(yr^*+ry)-x(r^*+r)y$. Clearly, $(rx+xr^*), y, x, (yr^*+ry) \in K$ and $-x(r^*+r)y \in KSK$. Therefore, $\alpha=rxy+xyr^*=\ldots=(rx+xr^*)y+x(yr^*+ry)-x(r^*+r)y \in KSK$ (notice that $K^2 \subseteq KSK$).

Using the claim we get: for every $r \in R$, $(xy+yx)r^*=-\beta+\alpha \in K+KSK$. But $R=R^*$, so for every $r \in R$, $(xy+yx)r \in K+KSK$.
(Of course, from the first place we could take $r^*$ instead of $r$, and get, without using $R=R^*$, that $(xy+yx)r \in K+KSK$).
Therefore, $(xy+yx)R \subseteq K+KSK$.
Claim 2: For every $u \in R$ and $w \in K+KSK$, $wu+u^*w \in K+KSK$.
Clearly, it is enough to show that for every $u \in R$, $a \in K$ and $b \in KSK$: $au+u^*a \in K$ and $bu+u^*b \in KSK$ (since if $w=a+b$ with $a \in K$ and $b \in KSK$, $wu+u^*w=(a+b)u+u^*(a+b)=au+bu+u^*a+u^*b=(au+u^*a)+(bu+u^*b) \in K+KSK$).
$(au+u^*a)^*=u^*a^*+a^*u=u^*(-a)+(-a)u=-(au+u^*a)$, so $au+u^*a \in K$.
As for $b \in KSK$, it is enough to consider $b=xyz$ where $x,z \in K$ and $y \in S$.
$bu+u^*b=(xyz)u+u^*(xyz)=xy(zu+u^*z)+(u^*x+xu)yz-xyu^*z-xuyz=xy(zu+u^*z)+(u^*x+xu)yz-x(yu^*+uy)z$.
$(zu+u^*z),(u^*x+xu) \in K$ and $(yu^*+uy) \in S$, so $bu+u^*b=(xyz)u+u^*(xyz)=\ldots=xy(zu+u^*z)+(u^*x+xu)yz-x(yu^*+uy)z \in KSK$.
Next, for every $r \in R$, $(xy+yx)r \in K+KSK$, so from the claim just seen (with $w=(xy+yx)r$), for every $u \in R$ and for every $r \in R$, $((xy+yx)r)u+u^*((xy+yx)r) \in K+KSK$.
Use again $(xy+yx)R \subseteq K+KSK$ to get that (for every $u \in R$ and for every $r \in R$) $((xy+yx)r)u=(xy+yx)ru \in K+KSK$.
Therefore, for every $u \in R$ and for every $r \in R$, $u^*((xy+yx)r) \in K+KSK$. So $R(xy+yx)R \subseteq K+KSK$.
By assumption, $xy+yx \neq 0$, so $R(xy+yx)R$ is a non-zero two-sided ideal of $R$ ($0 \neq xy+yx=1(xy+yx)1 \in R(xy+yx)R$).
$R$ is simple, so $R(xy+yx)R=R$. But we have just seen that $R(xy+yx)R \subseteq K+KSK$, hence $R=K+KSK$.
\end{proof}


\subsubsection{Results about $KS+K^2$}

\begin{thm}\label{R simple two options for R=KS+K^2}
Let $R$ be a simple associative unital $F$-algebra with $\Char(F)\neq 2$. Let $*$ be an involution on $R$ of the first kind and $S \subsetneq R$. Then, \bit
\item [(1)] If there exist $x,y \in K$ such that $xy+yx \neq 0$ and $xSy \subseteq KS+K^2$, then $KS+K^2=R$ (If there exist $x,y \in K$ such that $xy+yx \neq 0$ and $xSy \subseteq SK+K^2$, then $SK+K^2=R$).
\item [(2)] If there exist $x \in K$ and $y \in S$ such that $xy-yx \neq 0$ and $xSy \subseteq KS+K^2$, then $KS+K^2=R$ (If there exist $x \in S$ and $y \in K$ such that $xy-yx \neq 0$ and $xSy \subseteq SK+K^2$, then $SK+K^2=R$).
\item [(3)] If there exist $x \in K$ and $y \in S$ such that $xy-yx \neq 0$ and $xSy \subseteq KS$, then $KS=R$ (If there exist $x \in S$ and $y \in K$ such that $xy-yx \neq 0$ and $xSy \subseteq SK$, then $SK=R$).
\eit
\end{thm}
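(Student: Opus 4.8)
The plan is to handle all three parts by the template already used for Theorem~\ref{second criterion} and Theorem~\ref{R simple xy+yx neq 0 then R=K+KSK}. Write $c=xy+yx$ in case (1) and $c=xy-yx$ in cases (2),(3), and let $V$ denote the target set ($V=KS+K^2$ in (1),(2) and $V=KS$ in (3)). The three steps are: first show the right ideal $cR$ is contained in $V$; then establish the closure property of $V$ that lets one pass from $cR$ to the two-sided ideal $RcR$; and finally use $c\neq 0$ together with simplicity to get $RcR=R$, hence $V=R$. The parenthetical $SK$-versions then follow by applying $*$, which interchanges $KS\leftrightarrow SK$ and fixes $K^2$.

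For the containment $cR\subseteq V$ I would run the usual $\alpha/\beta$ splitting. In each case set $\alpha=\alpha(r)=rxy+xyr^{*}$ and choose $\beta$ so that $\alpha-\beta=c\,r^{*}$: take $\beta=rxy-yxr^{*}$ in (1) and $\beta=rxy+yxr^{*}$ in (2),(3). A one-line $*$-computation gives $\beta\in K\subseteq KS\subseteq V$, so the real work is $\alpha\in V$. Writing $r=s+k$ with $s\in S$, $k\in K$ and using the same identities as in Lemma~\ref{first lemma xsy in S^2 then (xy-yx)r in S^2}, namely $sxy+xys=(sx+xs)y+x(ys+sy)-2xsy$ and $kxy-xyk=(kx-xk)y+x(ky-yk)$, one reduces $\alpha$ to products of bracketed factors plus one copy of $xsy\in xSy$. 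The only point to check is, case by case, which of the factors $sx+xs,\ ys+sy,\ kx-xk,\ ky-yk$ is symmetric and which is skew: when $x,y\in K$ (case (1)) all four products fall in $K^2$, while when $x\in K,\ y\in S$ (cases (2),(3)) all four fall in $KS$. Thus every resulting product lies in $K^{2}$ (only in case (1)), in $KS$, or in $xSy$, and the hypothesis $xSy\subseteq V$ closes the computation; finally $R=R^{*}$ upgrades $c\,r^{*}\in V$ to $cR\subseteq V$.

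The decisive, and only genuinely delicate, step is the closure property of $V$. For (1) and (2) it is cheap, because $V=KS+K^{2}=KR$ is already a two-sided ideal: it is visibly a right ideal, and the identity $sk=(sk+ks)-ks$ with $sk+ks\in K$ shows $SK\subseteq KS+K^{2}$, whence $RK\subseteq KR$ and $RV\subseteq V$; since $V\supseteq K\neq 0$, simplicity already forces $V=R$, so in (1),(2) the hypotheses really only serve to fit the template. Part (3) is where care is needed, since $KS$ is neither a left nor a right ideal. The key claim is that $KS$ is nevertheless a \emph{Lie} ideal: for $w=ks$ and $u\in R$ one has
\[
wu-uw \;=\; k\,(su+u^{*}s)\;-\;(uk+ku^{*})\,s ,
\]
and because $su+u^{*}s\in S$ and $uk+ku^{*}\in K$, both summands lie in $KS$. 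I expect this cancellation to be the crux of the whole theorem: the two stray terms $k(u^{*}s)$ produced by $ksu$ and by $uks$ annihilate each other in the commutator, which is exactly what lets the \emph{pure} set $KS$ (with no $K^{2}$ slack) behave well.

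Granting the closure, the endgame is uniform. Take $w=cr\in V$. In cases (1),(2), $RcR\subseteq R(cR)\subseteq RV\subseteq V$ since $V$ is two-sided. In case (3), the Lie-ideal property gives $wu-uw\in KS$, while $wu=c(ru)\in cR\subseteq KS$, so $uw=u\,c\,r\in KS$ as well; hence $RcR\subseteq KS$. In all three cases $c\neq 0$ by hypothesis, so $RcR$ is a nonzero two-sided ideal of the simple ring $R$, giving $RcR=R$ and therefore $V=R$, as required.
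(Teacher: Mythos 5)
Your proof is correct. For part (3) it is essentially the paper's argument: the same $\alpha,\beta$ splitting gives $(xy-yx)R\subseteq KS$, the same identity $ksu-uks=k(su+u^{*}s)-(uk+ku^{*})s$ shows $KS$ is a Lie ideal, and then $RcR\subseteq KS$ plus simplicity finishes; your decomposition of $\alpha$ via $r=s+k$ and $sxy+xys=(sx+xs)y+x(ys+sy)-2xsy$ is only cosmetically different from the paper's direct $\alpha=(rx+xr^{*})y+x(yr^{*}+ry)-x(r^{*}+r)y$, since $r+r^{*}=2s$. Where you genuinely diverge is in parts (1) and (2): the paper proves that $KS+K^{2}$ is a Lie ideal and runs the full $cR\to RcR$ template, whereas you note that $KS+K^{2}=KR$ (because $R=S+K$) and that $KR$ is already a two-sided ideal, since $sk=(sk+ks)-ks$ with $sk+ks\in K$ gives $SK\subseteq K+KS$, hence $RK\subseteq KR$; as $K\neq 0$ (from $S\subsetneq R$), simplicity yields $KS+K^{2}=R$ with no hypothesis on $x,y$ at all. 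That is a sharper conclusion than the theorem states --- it exposes the hypotheses of (1) and (2) as superfluous --- at the cost of breaking the formal parallelism with part (3) and with the $S^{2}$ criteria that the paper's uniform treatment preserves. Your handling of the parenthetical $SK$-versions by applying $*$ (which interchanges $KS$ and $SK$, fixes $K^{2}$, and swaps the roles of $x$ and $y$ up to sign) is also valid and a little cleaner than the paper's ``repeat the computation with the obvious changes.''
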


\begin{proof}
\bit
\item [(1)] Let $x,y \in K$ such that $xy+yx \neq 0$ and $xSy \subseteq KS+K^2$.
For every $r \in R$, $rxy+xyr^*=rxy-yxr^*+yxr^*+xyr^*=rxy-yxr^*+(xy+yx)r^*$.
Let $\alpha=\alpha(r)=rxy+xyr^*$, $\beta=\beta(r)=rxy-yxr^*$. We will just write $\alpha$ and $\beta$, instead of $\alpha(r)$ and $\beta(r)$. So for every $r \in R$, $\alpha=\beta+(xy+yx)r^*$.
Claim 1: $\alpha \in KS+K^2$, $\beta \in K$.
Indeed, obviously $\beta \in K$, since $(\beta)^*=(rxy-yxr^*)*=yxr^*-rxy=-(rxy-yxr^*)=-\beta$.
As for $\alpha$: $\alpha=rxy+xyr^*=(rx+xr^*)y+x(yr^*+ry)-xr^*y-xry=(rx+xr^*)y+x(yr^*+ry)-x(r^*+r)y$.
Clearly, $(rx+xr^*), y, x, (yr^*+ry) \in K$.
$-x(r^*+r)y \in K^2$ from our assumption that $xSy \subseteq KS+K^2$.
Therefore, $\alpha=rxy+xyr^*=\ldots=(rx+xr^*)y+x(yr^*+ry)-x(r^*+r)y \in KS+K^2$.
Using the claim we get: for every $r \in R$, $(xy+yx)r^*=-\beta+\alpha \in KS+K^2$. But $R=R^*$, so for every $r \in R$, $(xy+yx)r \in KS+K^2$.
(Of course, from the first place we could take $r^*$ instead of $r$, and get, without using $R=R^*$, that $(xy+yx)r \in KS+K^2$).
Therefore, $(xy+yx)R \subseteq KS+K^2$.

Claim 2: For every $u \in R$ and $w \in KS+K^2$, $wu-uw \in KS+K^2$.
Clearly, it is enough to show that: \bit
\item For every $u \in R$, $a \in K$ and $b \in S$, $abu-uab \in KS$.
\item For every $u \in R$, $a,b \in K$, $abu-uab \in K^2$.
\eit
(since if $KS+K^2 \ni w=w_1+w_2$ with $w_1 \in KS$ and $w_2 \in K^2$, then $wu-uw=(w_1+w_2)u-u(w_1+w_2)=(w_1u-uw_1)+(w_2u-uw_2) \in KS+K^2$).
Indeed; \bit
\item For every $u \in R$, $a \in K$ and $b \in S$, $abu-uab=a(bu+u^*b)-(ua+au^*)b \in KS+KS=KS$ (obviously $(bu+u^*b) \in S$ and $(ua+au^*) \in K$).
\item For every $u \in R$, $a,b \in K$, $abu-uab=a(bu+u^*b)-(ua+au^*)b \in K^2$.
\eit
Next, for every $r \in R$, $(xy+yx)r \in KS+K^2$, so from the claim just seen (with $w=(xy+yx)r$), for every $u \in R$ and for every $r \in R$, $((xy+yx)r)u-u((xy+yx)r) \in KS+K^2$.
Use again $(xy+yx)R \subseteq KS+K^2$ to get that (for every $u \in R$ and for every $r \in R$) $((xy+yx)r)u=(xy+yx)ru \in KS+K^2$.
Therefore, for every $u \in R$ and for every $r \in R$, $-u((xy+yx)r) \in KS+K^2$. So $R(xy+yx)R \subseteq KS+K^2$.
By assumption, $xy+yx \neq 0$, so $R(xy+yx)R$ is a non-zero two-sided ideal of $R$.
$R$ is simple, so $R(xy+yx)R=R$. But we have just seen that $R(xy+yx)R \subseteq KS+K^2$, hence $R=KS+K^2$.
Similarly, one can show that $SK+K^2=R$ (the changes that must be made are: $(xy+yx)R \subseteq K+K^2 \subseteq SK+K^2$ and for every $u \in R$ and $w \in SK+K^2$, $wu-uw \in SK+K^2$).
\item [(2)] Let $x \in K$ and $y \in S$ such that $xy-yx \neq 0$ and $xSy \subseteq KS+K^2$.
For every $r \in R$, $rxy+xyr^*=rxy+yxr^*-yxr^*+xyr^*=rxy+yxr^*+(xy-yx)r^*$.
Let $\alpha=\alpha(r)=rxy+xyr^*$, $\beta=\beta(r)=rxy+yxr^*$.
Claim 1: $\alpha \in KS+K^2$, $\beta \in K$.
Indeed, obviously $\beta \in K$, since $(\beta)^*=(rxy+yxr^*)*=-yxr^*-rxy=-(rxy+yxr^*)=-\beta$.
As for $\alpha$: $\alpha=rxy+xyr^*=(rx+xr^*)y+x(yr^*+ry)-xr^*y-xry=(rx+xr^*)y+x(yr^*+ry)-x(r^*+r)y$.
$(rx+xr^*), x \in K$, $y, (yr^*+ry) \in S$ and $-x(r^*+r)y \in KS+K^2$ (by assumption $xSy \subseteq KS+K^2$).
Therefore, $\alpha=rxy+xyr^*=\ldots=(rx+xr^*)y+x(yr^*+ry)-x(r^*+r)y \in KS+K^2$.
Using the claim we get: for every $r \in R$, $(xy-yx)r^*=-\beta+\alpha \in K+KS+K^2 \subseteq KS+K^2$ (of course $K \subseteq KS$). But $R=R^*$, so for every $r \in R$, $(xy-yx)r \in KS+K^2$.
(Of course, from the first place we could take $r^*$ instead of $r$, and get, without using $R=R^*$, that $(xy-yx)r \in KS+K^2$).
Therefore, $(xy-yx)R \subseteq KS+K^2$.

Claim 2: For every $u \in R$ and $w \in KS+K^2$, $wu-uw \in KS+K^2$.
Clearly, it is enough to show that for every $u \in R$, $a,c \in K$ and $b \in S$, $abu-uab \in KS$ and $acu-uac \in K^2$.
Indeed; For every $u \in R$, $a,c \in K$ and $b \in S$, $abu-uab=a(bu+u^*b)-(ua+au^*)b \in KS+KS=KS$ (obviously $(bu+u^*b) \in S$ and $(ua+au^*) \in K$) and $acu-uac=a(cu+u^*c)-(ua+au^*)c \in K^2$ (obviously $(cu+u^*c),(ua+au^*) \in K$).
Next, for every $r \in R$, $(xy-yx)r \in KS+K^2$, so from the claim just seen (with $w=(xy-yx)r$), for every $u \in R$ and for every $r \in R$, $((xy-yx)r)u-u((xy-yx)r) \in KS+K^2$.
Use again $(xy-yx)R \subseteq KS+K^2$ to get that (for every $u \in R$ and for every $r \in R$) $((xy-yx)r)u=(xy-yx)ru \in KS+K^2$.
Therefore, for every $u \in R$ and for every $r \in R$, $-u((xy-yx)r) \in KS+K^2$. So $R(xy+yx)R \subseteq KS+K^2$.
By assumption, $xy-yx \neq 0$, so $R(xy-yx)R$ is a non-zero two-sided ideal of $R$.
$R$ is simple, so $R(xy-yx)R=R$. But we have just seen that $R(xy-yx)R \subseteq KS+K^2$, hence $R=KS+K^2$.
Similarly, one can show that $SK+K^2=R$ (the changes that must be made are: $(xy-yx)R \subseteq SK+K^2$ and for every $u \in R$ and $w \in SK+K^2$, $wu-uw \in SK+K^2$).
\item [(3)] Let $x \in K$ and $y \in S$ such that $xy-yx \neq 0$ and $xSy \subseteq KS$.
For every $r \in R$, $rxy+xyr^*=rxy+yxr^*-yxr^*+xyr^*=rxy+yxr^*+(xy-yx)r^*$.
Let $\alpha=\alpha(r)=rxy+xyr^*$, $\beta=\beta(r)=rxy+yxr^*$.
Claim 1: $\alpha \in KS$, $\beta \in K$.
Indeed, obviously $\beta \in K$, since $(\beta)^*=(rxy+yxr^*)*=-yxr^*-rxy=-(rxy+yxr^*)=-\beta$.
As for $\alpha$: $\alpha=rxy+xyr^*=(rx+xr^*)y+x(yr^*+ry)-xr^*y-xry=(rx+xr^*)y+x(yr^*+ry)-x(r^*+r)y$.
$(rx+xr^*), x \in K$, $y, (yr^*+ry) \in S$ and $-x(r^*+r)y \in KS$ (by assumption $xSy \subseteq KS$).
Therefore, $\alpha=rxy+xyr^*=\ldots=(rx+xr^*)y+x(yr^*+ry)-x(r^*+r)y \in KS$.
Using the claim we get: for every $r \in R$, $(xy-yx)r^*=-\beta+\alpha \in K+KS \subseteq KS$ (of course $K \subseteq KS$). But $R=R^*$, so for every $r \in R$, $(xy-yx)r \in KS$.
(Of course, from the first place we could take $r^*$ instead of $r$, and get, without using $R=R^*$, that $(xy-yx)r \in KS$).
Therefore, $(xy-yx)R \subseteq KS$.
Claim 2: For every $u \in R$ and $w \in KS$, $wu-uw \in KS$.
Clearly, it is enough to show that for every $u \in R$, $a \in K$ and $b \in S$, $abu-uab \in KS$.
Indeed; For every $u \in R$, $a \in K$ and $b \in S$, $abu-uab=a(bu+u^*b)-(ua+au^*)b \in KS+KS=KS$ (obviously $(bu+u^*b) \in S$ and $(ua+au^*) \in K$).

Next, for every $r \in R$, $(xy-yx)r \in KS$, so from the claim just seen (with $w=(xy-yx)r$), for every $u \in R$ and for every $r \in R$, $((xy-yx)r)u-u((xy-yx)r) \in KS$.
Use again $(xy-yx)R \subseteq KS$ to get that (for every $u \in R$ and for every $r \in R$) $((xy-yx)r)u=(xy-yx)ru \in KS$.
Therefore, for every $u \in R$ and for every $r \in R$, $-u((xy-yx)r) \in KS$. So $R(xy+yx)R \subseteq KS$.
By assumption, $xy-yx \neq 0$, so $R(xy-yx)R$ is a non-zero two-sided ideal of $R$.
$R$ is simple, so $R(xy-yx)R=R$. But we have just seen that $R(xy-yx)R \subseteq KS$, hence $R=KS$.
Similarly, one can show that $SK=R$ (the changes that must be made are: $(xy-yx)R \subseteq SK$ and for every $u \in R$ and $w \in SK+K^2$, $wu-uw \in SK+K^2$).
\eit
\end{proof}

\begin{example}
Matrices $M_n(F)$ ($\Char(F) \neq 2$) satisfy the first condition and the second condition of the above theorem.
\bit
\item [(1)] The first condition says: If there exist $x,y \in K$ such that $xy+yx \neq 0$ and $xSy \subseteq KS+K^2$, then $KS+K^2=R$.
Matrices (with $n >2$) satisfy a stronger condition, namely: there exist $x,y \in K$ such that $xy+yx \neq 0$ and $xSy \subseteq K^2$.
\bit
\item The transpose involution:
$\M_2(F)=KS+K^2$ (and $\M_2(F)=SK+K^2$) can be easily seen as follows, without using Theorem \ref{R simple two options for R=KS+K^2}
(The following computation seems better then using the first condition):
\bit
\item $KS \ni (e_{12}-e_{21})e_{11}=e_{12}e_{11}-e_{21}e_{11}=-e_{21}e_{11}=-e_{21}$.
\item $KS \ni (e_{12}-e_{21})e_{22}=e_{12}e_{22}-e_{21}e_{22}=e_{12}e_{22}=e_{12}$.
\item $KS \ni A=(e_{12}-e_{21})(e_{12}+e_{21})=e_{12}e_{12}+e_{12}e_{21}-e_{21}e_{12}-e_{21}e_{21}=e_{12}e_{21}-e_{21}e_{12}=e_{11}-e_{22}$.
$K^2 \ni B=(e_{12}-e_{21})(e_{12}-e_{21})=e_{12}e_{12}-e_{12}e_{21}-e_{21}e_{12}+e_{21}e_{21}=-e_{12}e_{21}-e_{21}e_{12}=-e_{11}-e_{22}$.
$KS+K^2 \ni A+B=(e_{11}-e_{22})+(-e_{11}-e_{22})=-2e_{22}$.
$KS+K^2 \ni A-B=(e_{11}-e_{22})-(-e_{11}-e_{22})=2e_{11}$.
\eit
So, $e_{11},e_{12},e_{21},e_{22} \in KS+K^2$, hence $\M_2(F)=KS+K^2$.

For $n \geq 3$: Take $x=e_{12}-e_{21} \in K$, $y=e_{13}-e_{31} \in K$.
$xy-yx=(e_{12}-e_{21})(e_{13}-e_{31})+(e_{13}-e_{31})(e_{12}-e_{21})=
e_{12}e_{13}-e_{12}e_{31}-e_{21}e_{13}+e_{21}e_{31}+e_{13}e_{12}-e_{13}e_{21}-e_{31}e_{12}+e_{31}e_{21}=
-e_{21}e_{13}-e_{31}e_{12}=-e_{23}-e_{32} \neq 0$.
Let $(a_{ij})=s \in S$. Then $xsy=(e_{12}-e_{21})s(e_{13}-e_{31})=-s_{23}e_{11}+s_{13}e_{21}+s_{21}e_{13}-s_{11}e_{23}$.
$e_{11}, e_{21}, e_{13}, e_{23} \in K^2$:
\bit
\item $K^2 \ni A=(e_{12}-e_{21})(e_{12}-e_{21})=e_{12}e_{12}-e_{12}e_{21}-e_{21}e_{12}+e_{21}e_{21}=-e_{12}e_{21}-e_{21}e_{12}=-e_{11}-e_{22}$.
$K^2 \ni B=(e_{13}-e_{31})(e_{13}-e_{31})=-e_{11}-e_{33}$.
$K^2 \ni C=(e_{23}-e_{32})(e_{23}-e_{32})=-e_{22}-e_{33}$.
Then, $K^2 \ni -(1/2)[A+B-C]=-(1/2)[(-e_{11}-e_{22})+(-e_{11}-e_{33})-(-e_{22}-e_{33})]=-(1/2)[-e_{11}-e_{22}-e_{11}-e_{33}+e_{22}+e_{33})]=-(1/2)[-2e_{11}]=e_{11}$.
\item $K^2 \ni -(e_{23}-e_{32})(e_{13}-e_{31})=-[e_{23}e_{13}-e_{23}e_{31}-e_{32}e_{13}+e_{32}e_{31}]=-[-e_{23}e_{31}]=e_{23}e_{31}=e_{21}$.
\item $K^2 \ni (e_{12}-e_{21})(e_{23}-e_{32})=e_{12}e_{23}-e_{12}e_{32}-e_{21}e_{23}+e_{21}e_{32}=e_{12}e_{23}=e_{13}$.
\item $K^2 \ni -[(e_{12}-e_{21})(e_{13}-e_{31})]=-[e_{12}e_{13}-e_{12}e_{31}-e_{21}e_{13}+e_{21}e_{31}]=-[-e_{21}e_{13}]=e_{23}$
\eit
\item The symplectic involution:
$\M_2(F)=K+K^2$ (which is stronger then $\M_2(F)=KS+K^2$) can be easily seen as follows, without using Theorem \ref{R simple two options for R=KS+K^2}:
\bit
\item $K^2 \ni (e_{11}-e_{22})e_{12}=e_{11}e_{12}-e_{22}e_{12}=e_{11}e_{12}=e_{12}$.
\item $K^2 \ni (e_{11}-e_{22})e_{21}=e_{11}e_{21}-e_{22}e_{21}=-e_{22}e_{21}=-e_{21}$.
\item $K^2 \ni A=(e_{11}-e_{22})(e_{11}-e_{22})=e_{11}e_{11}-e_{11}e_{22}-e_{22}e_{11}+e_{22}e_{22}=e_{11}e_{11}+e_{22}e_{22}=e_{11}+e_{22}$.
$K \ni B=(e_{11}-e_{22})$.
$K^2+K \ni A+B=(e_{11}+e_{22})+(e_{11}-e_{22})=2e_{11}$.
$K^2+K \ni A-B=(e_{11}+e_{22})-(e_{11}-e_{22})=2e_{22}$.
\eit
So, $e_{11},e_{12},e_{21},e_{22} \in K+K^2$, hence $\M_2(F)=K+K^2$.
The first condition can also be used quite easily; Take $x=e_{12} \in K$, $y=e_{21} \in K$, then $xy+yx=e_{11}+e_{22} \neq 0$. For every $S \ni s=\alpha (e_{11}+e_{22})$ $xsy=\alpha xy \in K^2 \subseteq KS+K^2$.

For $n=2m \geq 4$: Take $x=e_{1,m+1} \in K$, $y=e_{m+1,1} \in K$.
$xy-yx=e_{1,m+1}e_{m+1,1}-e_{m+1,1}e_{1,m+1}=e_{11}-e_{m+1,m+1} \neq 0$.
Let $(a_{ij})=s \in S$. Then $xsy=e_{1,m+1}se_{m+1,1}=a_{m+1,m+1}e_{11}$.
$e_{11} \in K^2$: $K^2 \ni e_{1,m+1}e_{m+1,1}=e_{11}$.
\eit
\item [(2)] The second condition says: If there exist $x \in S$ and $y \in K$ such that $xy-yx \neq 0$ and $xSy \subseteq SK+K^2$, then $SK+K^2=R$.
Matrices (with $n >2$) satisfy a stronger condition, namely: there exist $x \in S$ and $y \in K$ such that $xy-yx \neq 0$ and $xSy \subseteq K^2$.
\bit
\item The transpose involution: (Similar computations to the ones seen a few lines above show that $\M_2(F)=SK+K^2$).
For $n \geq 3$: Take $x=e_{12}+e_{21} \in S$, $y=e_{1n}-e_{n1} \in K$.
 $xy-yx=(e_{12}+e_{21})(e_{1n}-e_{n1})-(e_{1n}-e_{n1})(e_{12}+e_{21})=e_{12}e_{1n}-e_{12}e_{n1}+e_{21}e_{1n}-e_{21}e_{n1}-e_{1n}e_{12}-e_{1n}e_{21}+e_{n1}e_{12}+e_{n1}e_{21}=e_{21}e_{1n}+e_{n1}e_{12}=e_{2n}+e_{n2} \neq 0$.
Let $(a_{ij})=s \in S$. Then $xsy=(e_{12}+e_{21})s(e_{1n}-e_{n1})=-a_{2n}e_{11}-a_{1n}e_{21}+a_{21}e_{1n}+a_{11}e_{2n}$.
$e_{11}, e_{21}, e_{1n}, e_{2n} \in K^2$:
\bit
\item We have just seen that $e_{11},e_{21} \in K^2$.
\item $K^2 \ni (e_{1,n-1}-e_{n-1,1})(e_{n-1,n}-e_{n,n-1})=e_{1,n-1}e_{n-1,n}-e_{1,n-1}e_{n,n-1}-e_{n-1,1}e_{n-1,n}+e_{n-1,1}e_{n,n-1}=e_{1,n-1}e_{n-1,n}=e_{1n}$.
\item $K^2 \ni -[(e_{12}-e_{21})(e_{1n}-e_{n1})]=-[e_{12}e_{1n}-e_{12}e_{n1}-e_{21}e_{1n}+e_{21}e_{n1}]=-[-e_{21}e_{1n}]=e_{2n}$
\eit
\item The symplectic involution: 
We have already seen a few lines above that a direct computation yields $\M_2(F)=K+K^2$.
However, the second condition cannot be used, since there are no $x \in S$ and $y \in K$ such that $xy-yx \neq 0$ (indeed, $S \ni x=\beta (e_{11}+e_{22})$, so $xy-yx=\beta (e_{11}+e_{22})y-y \beta (e_{11}+e_{22})=\beta (y-y)=0$).

For $n=2m \geq 4$: Take $x=e_{1,m}+e_{2m,m+1} \in S$, $y=e_{m,2m} \in K$.
$xy-yx=(e_{1,m}+e_{2m,m+1})e_{m,2m}-e_{m,2m}(e_{1,m}+e_{2m,m+1})=e_{1,m}e_{m,2m}+e_{2m,m+1}e_{m,2m}-e_{m,2m}e_{1,m}-e_{m,2m}e_{2m,m+1}=e_{1,m}e_{m,2m}-e_{m,2m}e_{2m,m+1}=e_{1,2m}-e_{m,m+1} \neq 0$.
Let $(a_{ij})=s \in S$. Then $xsy=(e_{1,m}+e_{2m,m+1})se_{m,2m}=a_{m,m}e_{1,2m}+a_{m+1,m}e_{2m,2m}$.
$e_{1,2m}, e_{2m,2m} \in K^2$: \bit
\item $K^2 \ni (e_{1,m}+e_{2m,m+1})e_{m,2m}=e_{1,m}e_{m,2m}+e_{2m,m+1}e_{m,2m}=e_{1,2m}$.
\item $K^2 \ni e_{2m,m}e_{m,2m}=e_{2m,2m}$.
\eit
\eit
\eit
\end{example}

\subsubsection{Results about \texorpdfstring{$K+K^2, K+K^2+K^3, K+K^3$}{K+K2, K+K2+K3, K+K3}}

For the next theorem about $K+K^2$ we assume that $K$ is not a skew-commutative set.
We have seen that in a simple ring $R$ :\bit
\item If $K$ is not skew-commutative, then $R=K+KSK$.
\item If there exist $x,y \in K$ such that $xy+yx \neq 0$ and $xSy \subseteq K^2$ (in particular, $K$ is not skew- commutative), then $KS+K^2=R$ and $SK+K^2=R$.
(\item Without assuming that $K$ is not a skew-commutative set:
If there exist $x \in K, y \in S$ such that $xy-yx \neq 0$ and $xSy \subseteq KS+K^2$, then $KS+K^2=R$).
\eit
In order to show Herstein's stronger result, namely $R=K+K^2$, we demand a lot more then just non skew-commutativity of $K$;
We demand that there exist $x,y \in K$ such that $xy+yx$ is right invertible and $xSy \subseteq K^2$.
Notice that we demand much more then usual; Usually (take a look also in the next subsection), we demand that either $xy\pm yx \neq 0$ and $xAy \subseteq B$ (where $A \in \{S, K\}$ and $B \in \{S^2, K^2, KS+K^2, KS, K+KS+SK\}$) or
$xy\pm yx$ is right invertible, but not both.
We will not assume that $R$ is simple (when $xy \pm yx$ is right invertible, simplicity of $R$ is not needed).
We demand that $xy+yx$ will be right invertible (instead of $R$ is simple) since from $((xy+yx)r)=w \in K+K^2$ we are not able to show that one of the four expressions $\{wu+uw, wu-uw, wu+u^*w, wu-u^*w \}$ is also in $K+K^2$ (although for $w=((xy+yx)r) \in K+K^2$ it is true that $wu-uw \in KS+K^2$, but this is not good enough for us).
If we write $w=w_1+w_2$ where $w_1 \in K, w_2 \in K^2$, then $w_1u+u^*w_1 \in K$ and $w_2u-uw_2 \in K^2$ (we have already seen that $K^2$ is a Lie ideal of $R$, see the proof of Theorem \ref{R simple two options for R=KS+K^2}).
However, we need the same expression for both $w_1$ and $w_2$ (in order to get that the two-sided ideal $R(xy+yx)R \subseteq K+K^2$).

\begin{thm}\label{K+K^2}
Let $R$ be an associative unital $F$-algebra (not necessarily simple) with $\Char(F)\neq 2$. Let $*$ be an involution on $R$ of the first kind and $S \subsetneq R$. Assume that $K$ is not a skew-commutative set. Also assume that there exist $x,y \in K$ such that $xy+yx$ is right invertible and $xSy \subseteq K^2$ (or more generally, $xTy \subseteq K^2$), then $K+K^2=R$.
\end{thm}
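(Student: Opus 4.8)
The plan is to follow the same two-move strategy used for the earlier $K$-results, especially Theorem \ref{R simple two options for R=KS+K^2}, but with the simplicity of $R$ replaced by the right invertibility of $xy+yx$ exactly as in (ii) of Proposition \ref{section S^3: xy-yx is invertible}. First I would establish the right-ideal containment $(xy+yx)R \subseteq K+K^2$, and then, since $R$ is no longer assumed simple, I would invert $xy+yx$ on the right to upgrade this containment directly to $R=K+K^2$.

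For the containment, fix $r\in R$ and set $\alpha=\alpha(r)=rxy+xyr^*$ and $\beta=\beta(r)=rxy-yxr^*$, so that (adding and subtracting $yxr^*$) we get $(xy+yx)r^*=\alpha-\beta$. Recalling $x^*=-x$ and $y^*=-y$, a short computation of $\beta^*$ gives $\beta^*=yxr^*-rxy=-\beta$, so $\beta\in K$. For $\alpha$ I would use the same regrouping as in Theorem \ref{R simple two options for R=KS+K^2}, namely $\alpha=(rx+xr^*)y+x(yr^*+ry)-x(r^*+r)y$, which one checks expands back to $rxy+xyr^*$. Here $rx+xr^*$ and $yr^*+ry$ are skew-symmetric, so the first two summands lie in $K^2$ (with $x,y\in K$), while the third is $x(r^*+r)y$ with $r^*+r\in T\subseteq S$; the hypothesis $xSy\subseteq K^2$ (and in fact only the weaker $xTy\subseteq K^2$, since only the trace $r^*+r$ ever appears) puts it in $K^2$. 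Hence $\alpha\in K^2$ and $(xy+yx)r^*=\alpha-\beta\in K+K^2$. Taking $r^*$ in place of $r$ from the outset avoids any appeal to $R=R^*$, and yields $(xy+yx)R\subseteq K+K^2$.

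The final step is where the right invertibility is essential, and it is also the point the discussion preceding the theorem flags as the real obstacle: because $R$ is not assumed simple, I cannot instead pass to the two-sided ideal $R(xy+yx)R$, for $K+K^2$ is not visibly a Lie ideal (only $K^2$ is), so there is no uniform left-multiplication rule carrying $K+K^2$ into itself. Instead, choosing $z$ with $(xy+yx)z=1$, every $r\in R$ satisfies $r=1\cdot r=(xy+yx)(zr)\in(xy+yx)R$, so $R=(xy+yx)R\subseteq K+K^2\subseteq R$, giving $K+K^2=R$. I would also remark that right invertibility forces $xy+yx\neq 0$, so these $x,y$ already witness that $K$ is not skew-commutative, and the explicit non-skew-commutativity hypothesis is therefore redundant and could be dropped (compare the analogous remark after Proposition \ref{section S^3: xy-yx is invertible}). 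In this argument the only genuine content is the decomposition of $\alpha$ using $xSy\subseteq K^2$; everything else is the routine right-invertibility bookkeeping.
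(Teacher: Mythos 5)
Your proposal is correct and follows essentially the same route as the paper: the identical decomposition $\alpha=(rx+xr^*)y+x(yr^*+ry)-x(r^*+r)y$ with $\beta=rxy-yxr^*\in K$ to get $(xy+yx)R\subseteq K+K^2$, followed by the same right-invertibility step $r=(xy+yx)(zr)$. Your added observations --- that only $xTy\subseteq K^2$ is needed, and that right invertibility makes the non-skew-commutativity hypothesis redundant --- are both accurate and consistent with remarks the paper itself makes.
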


\begin{proof}
$K$ is not a skew-commutative set, hence there exist $x,y \in K$ such that $xy \neq -yx$. Hence $xy+yx \neq 0$.
For every $r \in R$, $rxy+xyr^*=rxy-yxr^*+yxr^*+xyr^*=rxy-yxr^*+(xy+yx)r^*$.
Let $\alpha=\alpha(r)=rxy+xyr^*$, $\beta=\beta(r)=rxy-yxr^*$. We will just write $\alpha$ and $\beta$, instead of $\alpha(r)$ and $\beta(r)$. So for every $r \in R$, $\alpha=\beta+(xy+yx)r^*$.
Claim 1: $\alpha \in K^2$, $\beta \in K$.
Indeed, obviously $\beta \in K$, since $(\beta)^*=(rxy-yxr^*)*=yxr^*-rxy=-(rxy-yxr^*)=-\beta$.
As for $\alpha$: $\alpha=rxy+xyr^*=(rx+xr^*)y+x(yr^*+ry)-xr^*y-xry=(rx+xr^*)y+x(yr^*+ry)-x(r^*+r)y$.
Clearly, $(rx+xr^*), y, x, (yr^*+ry) \in K$. $-x(r^*+r)y \in K^2$ by our assumption that $xSy \subseteq K^2$.
Therefore, $\alpha=rxy+xyr^*=\ldots=(rx+xr^*)y+x(yr^*+ry)-x(r^*+r)y \in K^2$.
Using the claim we get: for every $r \in R$, $(xy+yx)r^*=-\beta+\alpha \in K+K^2$. But $R=R^*$, so for every $r \in R$, $(xy+yx)r \in K+K^2$.
Therefore, $(xy+yx)R \subseteq K+K^2$.
Now, $xy+yx$ is right invertible, so there exists $z \in R$ such that $(xy+yx)z=1$.
For $r \in R$, $r=1r=((xy+yx)z)r=(xy+yx)(zr)$. We have just seen that $(xy+yx)R \subseteq K+K^2$, so $r=\ldots=(xy+yx)(zr) \in K+K^2$.
So each element $r \in R$ is in $K+K^2$, hence $K+K^2=R$.
\end{proof}

\begin{example}\label{example K+K^2}

Let $R=\M_2(F)$, $\Char(F) \neq neq 2$.
The transpose involution is not an example, since for any $x,y \in K$, $xSy$ is not contained in $K^2$ ($K^2=\{\lambda(e_{11}+e_{22})| \lambda \in F\}$).
The symplectic involution is an example: Take $x=e_{12}$, $y=e_{21}$.
Then $xy+yx=e_{12}e_{21}+e_{21}e_{12}=e_{11}+e_{22}$ which is, of course, invertible.
$xSy \subseteq K^2$ (clearly, $S=\{\lambda(e_{11}+e_{22})| \lambda \in F\}$): $e_{12}(ae_{11}+ae_{22})e_{21}=
e_{12}(aI)e_{21}=a(e_{12}e_{21})=ae_{11}=a(xy) \in K^2$.
\end{example}

We can also consider $K+K^2+K^3$ and $K+K^3$.
Now we assume that $K$ is not a commutative set in order to get another theorem similar to that of Herstein's.
\begin{thm}\label{R simple xy-yx neq 0 then R=K+K^3 etc}
Let $R$ be an associative unital $F$-algebra with $\Char(F)\neq 2$. Let $*$ be an involution on $R$ of the first kind and $S \subsetneq R$. Assume that $K$ is not a commutative set. Then,
\bit
\item If there exist $x,y \in K$ such that $xy-yx$ is right invertible, then $R=K+K^2+K^3$.
\item If $R$ is simple and $K^2 \subseteq K+K^3$, then $K+K^3=R$ (now we do not demand that there exist $x,y \in K$ such that $xy-yx$ is right invertible, we just demand that $K$ is a noncommutative set).
\eit
\end{thm}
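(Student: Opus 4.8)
The plan is to reduce both assertions to a single purely computational fact, valid for \emph{arbitrary} $x,y\in K$: for every $r\in R$ one has $(xy-yx)r\in K+K^2+K^3$. This is the skew-element analogue of Lemma~\ref{first lemma (xy-yx)r in S^3}, and I would prove it by the same add-and-subtract device. Writing $r=s+k$ with $s\in S$, $k\in K$, set $\gamma=rxy-xyr^*$ and $\delta=rxy-yxr^*$, so that $(xy-yx)r^*=\delta-\gamma$. A one-line involution check gives $\delta\in K$. For $\gamma$ I would first regroup $\gamma=(rx+xr^*)y-x(r^*y+yr^*)$, using that $rx+xr^*\in K$, and then substitute $r^*y+yr^*=(sy+ys)-(ky+yk)$. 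Since $sy+ys\in K$, this turns $\gamma$ into $(rx+xr^*)y-x(sy+ys)+xky+xyk$, whose first two summands lie in $K^2$ and whose last two lie in $K^3$; hence $\gamma\in K^2+K^3$ and $(xy-yx)r^*=\delta-\gamma\in K+K^2+K^3$. Replacing $r$ by $r^*$ from the outset (or invoking $R=R^*$) removes the star, giving $(xy-yx)R\subseteq K+K^2+K^3$.

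With this in hand the first assertion is immediate: if $xy-yx$ is right invertible, choose $z$ with $(xy-yx)z=1$; then every $r=\big((xy-yx)z\big)r=(xy-yx)(zr)$ lies in $(xy-yx)R\subseteq K+K^2+K^3$, so $R=K+K^2+K^3$. (Here the hypothesis that $K$ is noncommutative is automatic, since a right invertible element is nonzero.)

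For the second assertion I would run the two-sided-ideal argument that powers Theorem~\ref{two sided in S^3} and Theorem~\ref{R simple two options for R=KS+K^2}. Using the extra hypothesis $K^2\subseteq K+K^3$, the containment above collapses to $(xy-yx)R\subseteq K+K^3$. It then remains to show that the full ideal $R(xy-yx)R$ sits inside $K+K^3$, for which I would establish the closure claim: for every $u\in R$ and every $w\in K+K^3$, $wu+u^*w\in K+K^3$. Splitting $w=w_1+w_2$ with $w_1\in K$, $w_2\in K^3$, the $K$-part is handled by $(w_1u+u^*w_1)^*=-(w_1u+u^*w_1)$, and the $K^3$-part by linearity from the generator identity $abcu+u^*abc=ab(cu+u^*c)+(u^*a+au)bc-a(bu^*+ub)c$ (the same rearrangement as in the Claim of Lemma~\ref{second lemma r(xy-yx)t in S^3}), all three factors now being skew so that each summand stays in $K^3$. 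Feeding $w=(xy-yx)r$ into this claim, together with $wu=(xy-yx)(ru)\in K+K^3$, yields $u^*(xy-yx)r\in K+K^3$; since $R=R^*$ this gives $R(xy-yx)R\subseteq K+K^3$. As $K$ is noncommutative we may pick $x,y$ with $xy-yx\neq0$, so $R(xy-yx)R$ is a nonzero two-sided ideal, and simplicity forces it to equal $R$, whence $R=K+K^3$.

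The only genuinely delicate point is the degree bookkeeping in the computation of $\gamma$: the naive split $sxy-xys=(sx-xs)y+x(sy-ys)$ lands in $SK+KS$ and looks useless, and the trick is to keep $x(ky+yk)$ unexpanded just long enough to notice that $x(ky+yk)=xky+xyk\in K^3$ \emph{even though} $ky+yk\in S$. Getting that accounting right — and correspondingly verifying that each of $cu+u^*c$, $u^*a+au$, $bu^*+ub$ is skew in the closure claim — is where the care is needed; everything else is a transcription of arguments already used for $S^3$ and for $KS+K^2$.
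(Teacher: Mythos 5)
Your proposal is correct and follows essentially the same route as the paper: the same add-and-subtract device with $\delta=rxy-yxr^*\in K$ and $\gamma=rxy-xyr^*\in K^2+K^3$, the same right-invertibility shortcut for the first assertion, and the same closure claim $wu+u^*w\in K+K^3$ plus the simplicity/two-sided-ideal argument for the second. The only (harmless) divergence is in the bookkeeping for $\gamma$: the paper writes $\gamma=(rx+xr^*)y-x(yr^*+ry)-x(r^*-r)y$ and observes that $yr^*+ry$ and $r^*-r$ already lie in $K$, which avoids the $r=s+k$ split and the $xky+xyk$ manoeuvre you flag as the delicate point.
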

Notice that, as in Theorem \ref{K+K^2}, simplicity of $R$ is not needed when $xy-yx$ is right invertible (and not just $xy-yx \neq 0$).

\begin{proof}
$K$ is not a commutative set, hence there exist $x,y \in K$ such that $xy-yx \neq 0$.
For every $r \in R$, $rxy-xyr^*=rxy-yxr^*+yxr^*-xyr^*=rxy-yxr^*-(xy-yx)r^*$.
Let $\alpha=\alpha(r)=rxy-xyr^*$, $\beta=\beta(r)=rxy-yxr^*$. We will just write $\alpha$ and $\beta$, instead of $\alpha(r)$ and $\beta(r)$. So for every $r \in R$, $\alpha=\beta-(xy-yx)r^*$.

Claim 1: $\alpha \in K^2+K^3$, $\beta \in K$.
Indeed, obviously $\beta \in K$, since $(\beta)^*=(rxy-yxr^*)*=yxr^*-rxy=-(rxy-yxr^*)=-\beta$.
As for $\alpha$: $\alpha=rxy-xyr^*=(rx+xr^*)y-x(yr^*+ry)-xr^*y+xry=(rx+xr^*)y-x(yr^*+ry)-x(r^*-r)y$. Clearly, $(rx+xr^*), y, -x, (yr^*+ry) \in K$ and $-x(r^*-r)y \in K^3$.
Therefore, $\alpha=rxy-xyr^*=\ldots=(rx+xr^*)y-x(yr^*+ry)-x(r^*-r)y \in K^2+K^3$.
Using the claim we get: for every $r \in R$, $-(xy-yx)r^*=-\beta+\alpha \in K+K^2+K^3$. But $R=R^*$, so for every $r \in R$, $(xy-yx)r \in K+K^2+K^3$.
Therefore, $(xy-yx)R \subseteq K+K^2+K^3$.
\bit
\item If those $x,y \in K$ are such that $xy-yx$ is right invertible, then for every $r \in R$ $r=1r=((xy-yx)z)r=(xy-yx)(zr) \in K+K^2+K^3$ (we assumed that $z$ is the right inverse of $xy-yx$).
Hence, $R=K+K^2+K^3$.
\item We have just seen that $(xy-yx)R \subseteq K+K^2+K^3$. So, with our assumption that $K^2 \subseteq K+K^3$, we get that $(xy-yx)R \subseteq K+K^3$.

Claim 2: For every $u \in R$ and $w \in K+K^3$, $wu+u^*w \in K+K^3$.
Clearly, it is enough to show that for every $u \in R$, $w_1 \in K$ and $w_2 \in K^3$: $w_1u+u^*w_1 \in K$ and $w_2u+u^*w_2 \in K^3$ (since if $w=w_1+w_2$ with $w_1 \in K$ and $w_2 \in K^3$,
$wu+u^*w=(w_1+w_2)u+u^*(w_1+w_2)=w_1u+w_2u+u^*w_1+u^*w_2=(w_1u+u^*w_1)+(w_2u+u^*w_2) \in K+K^3$).
$(w_1u+u^*w_1)^*=u^*w_1^*+w_1^*u=u^*(-w_1)+(-w_1)u=-(w_1u+u^*w_1)$, so $w_1u+u^*w_1 \in K$.
As for $w_2 \in K^3$, it is enough to consider $b=xyz$ where $x,y,z \in K$.
$w_2u+u^*w_2=(xyz)u+u^*(xyz)=xy(zu+u^*z)+(u^*x+xu)yz-xyu^*z-xuyz=xy(zu+u^*z)+(u^*x+xu)yz-x(yu^*+uy)z$.
$(zu+u^*z),(u^*x+xu),(yu^*+uy) \in K$, so $w_2u+u^*w_2=(xyz)u+u^*(xyz)=\ldots=xy(zu+u^*z)+(u^*x+xu)yz-x(yu^*+uy)z \in K^3$.
Next, for every $r \in R$, we have seen that $(xy-yx)r \in K+K^3$, so from the claim just seen (with $w=(xy-yx)r$), for every $u \in R$ and for every $r \in R$, $((xy-yx)r)u+u^*((xy-yx)r) \in K+K^3$.
Use again $(xy-yx)R \subseteq K+K^3$ to get that (for every $u \in R$ and for every $r \in R$) $((xy-yx)r)u=(xy-yx)ru \in K+K^3$.
Therefore, for every $u \in R$ and for every $r \in R$, $u^*((xy-yx)r) \in K+K^3$. So $R(xy-yx)R \subseteq K+K^3$.
By assumption, $xy-yx \neq 0$, so $R(xy-yx)R$ is a non-zero two-sided ideal of $R$ ($0 \neq xy-yx=1(xy-yx)1 \in R(xy-yx)R$).
$R$ is simple, so $R(xy-yx)R=R$. But we have just seen that $R(xy-yx)R \subseteq K+K^3$, hence $R=K+K^3$.
\eit
\end{proof}
\subsection{Results about \texorpdfstring{$SKS$}{SKS}}

We had two lemmas, Lemma \ref{first lemma (xy-yx)r in S^3} and Lemma \ref{second lemma r(xy-yx)t in S^3}.
Remember that if we assume that $S$ is not a commutative set, 
those two lemmas immediately imply Theorem \ref{two sided in S^3} 
(and that theorem in turn implies its corollary, Corollary \ref{simple S^3=R}).
Here we bring two similar lemmas to the ones just mentioned, and also if we assume that $S$ is not a commutative set, then those two lemmas immediately imply a similar theorem to Theorem \ref{two sided in S^3}.
Notice that $SKS$ contains $K, SK, KS$ (since $1 \in Z(R) \subseteq S$).

\begin{lem}\label{first lemma (xy-yx)r in SKS. Again x,y in S}
Let $R$ be an associative unital $F$-algebra with $\Char(F)\neq 2$. Let $*$ be an involution on $R$ of the first kind and $S \subsetneq R$. Let $x,y \in S$. Then for every $r \in R$, $(xy-yx)r \in SKS$. In other words, the right ideal $(xy-yx)R$ is contained in $SKS$.
\end{lem}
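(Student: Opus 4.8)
The plan is to mimic the proof of Lemma~\ref{first lemma (xy-yx)r in S^3}, keeping the same $\alpha,\beta$ bookkeeping but flipping one sign so that the ``easy'' summand lands in $K$ rather than in $S$. The reason for the flip is the one genuinely new feature of this setting: whereas in the $S^3$ lemma one could freely use $S\subseteq S^3$, here $S$ is \emph{not} contained in $SKS$ in general, while $K$, $SK$ and $KS$ all are (because $1\in Z(R)\subseteq S$). First I would dispose of the trivial case $xy-yx=0$. For $xy-yx\neq 0$ and $r\in R$, set $\alpha=\alpha(r)=rxy-xyr^*$ and $\beta=\beta(r)=rxy-yxr^*$, so that $\alpha-\beta=-(xy-yx)r^*$, i.e. $(xy-yx)r^*=\beta-\alpha$.

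A one-line involution computation gives $\beta^*=yxr^*-rxy=-\beta$, so $\beta\in K\subseteq SKS$; this is exactly where the sign change pays off (the ``$+$'' version would only give $\beta\in S$, which is useless here). It then remains to show $\alpha\in SKS$. Writing $r=s+k$ with $s\in S$, $k\in K$, expansion gives $\alpha=rxy-xyr^*=(sxy-xys)+(kxy+xyk)$, so the whole problem reduces to showing that each of these two blocks lies in $SKS$.

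The key sub-step --- and the one I expect to be the real obstacle, since it is precisely the place where the $S^3$ argument breaks down --- is to show that for $x,y\in S$ and $k\in K$ one has $xyk\in SKS$ (and, symmetrically, $kxy\in SKS$). Here I would use that $yk+ky\in K$ (indeed $(yk+ky)^*=-ky-yk$), whence $x(yk+ky)\in SK\subseteq SKS$; since also $xky\in SKS$ by definition, the identity $xyk=x(yk+ky)-xky$ puts $xyk\in SKS$. For the remaining block I would write $xy=\tfrac12\big((xy+yx)+(xy-yx)\big)$ with $xy+yx\in S$ and $xy-yx\in K$, so that $sxy-xys=s(xy)-(xy)s$ splits as $\tfrac12\big(s(xy+yx)-(xy+yx)s\big)+\tfrac12\big(s(xy-yx)-(xy-yx)s\big)$; the first commutator is skew, hence in $K\subseteq SKS$, and the second is a difference of an $SK$ term and a $KS$ term, hence in $SKS$.

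Combining these, $\alpha\in SKS$ and $\beta\in SKS$, so $(xy-yx)r^*=\beta-\alpha\in SKS$; since $R=R^*$ this yields $(xy-yx)r\in SKS$ for every $r\in R$, i.e. $(xy-yx)R\subseteq SKS$. (As in the original lemma one could instead start from $-r^*$ to avoid invoking $R=R^*$.) Finally I would remark that the sub-step $xyk\in SKS$ actually gives a shortcut bypassing the $\alpha,\beta$ device altogether: since $(xy-yx)s\in KS\subseteq SKS$ and $(xy-yx)k=xyk-yxk\in SKS$, one obtains $(xy-yx)r\in SKS$ directly from the decomposition $r=s+k$.
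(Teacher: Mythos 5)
Your proof is correct, and it keeps the paper's overall skeleton: the same $\alpha(r)=rxy-xyr^*$, $\beta(r)=rxy-yxr^*$, with the sign flipped relative to the $S^3$ lemma for exactly the reason you identify (so that $\beta\in K\subseteq SKS$, since $S\not\subseteq SKS$), followed by $(xy-yx)r^*=\beta-\alpha$ and $R=R^*$. Where you genuinely diverge is in showing $\alpha\in SKS$. The paper does this in one line with the identity $\alpha=(rx-xr^*)y-x(yr^*-ry)-x(r-r^*)y$, in which each bracketed term is skew, so the three summands land in $KS$, $SK$ and $SKS$ respectively; no splitting of $r$ into $s+k$ is needed. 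You instead write $r=s+k$, reduce to the blocks $sxy-xys$ and $kxy+xyk$, and then need two further sub-arguments: the Jordan-product trick $xyk=x(yk+ky)-xky$ (and its mirror for $kxy$), and the decomposition $xy=\frac12(xy+yx)+\frac12(xy-yx)$, which uses $\Char(F)\neq 2$ a second time. Both routes are valid; the paper's is shorter and stays closer to the template of the earlier lemmas, while yours isolates the reusable facts $SKS\supseteq K,SK,KS$ and $xyk,kxy\in SKS$. Your closing remark is actually the best version of the argument: since $xy-yx\in K$, one has $(xy-yx)s\in KS\subseteq SKS$ and $(xy-yx)k=xyk-yxk\in SKS$, so $(xy-yx)r\in SKS$ follows directly from $r=s+k$, bypassing $\alpha$ and $\beta$ entirely.
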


\begin{proof}
There are two options: \bit
\item $xy-yx=0$: Then, trivially, for every $r \in R$ $(xy-yx)r=0 \in SKS$.
\item $xy-yx\neq 0$: For every $r\in R$, $rxy-xyr^*=rxy-yxr^*+yxr^*-xyr^*$.
Let $\alpha=\alpha(r)=rxy-xyr^*$, $\beta=\beta(r)=rxy-yxr^*$. We will just write $\alpha$ and $\beta$, instead of $\alpha(r)$ and $\beta(r)$. So for every $r \in R$, $\alpha=\beta-(xy-yx)r^*$.

Claim: $\alpha \in SKS$, $\beta \in K$.
Indeed, obviously $\beta \in K$, since $(\beta)^*=(rxy-yxr^*)*=yxr^*-rxy=-(rxy-yxr^*)=-(\beta)$.
As for $\alpha$: $\alpha=rxy-xyr^*=(rx-xr^*)y-x(yr^*-ry)+xr^*y-xry=(rx-xr^*)y-x(yr^*-ry)-x(-r^*+r)y$.
$(rx-xr^*), (yr^*-ry) \in K$, so $(rx-xr^*)y \in KS$ and $-x(yr^*-ry) \in SK$. $-x(-r^*+r)y \in SKS$.
Hence, $\alpha=rxy-xyr^*=\ldots=(rx-xr^*)y-x(yr^*-ry)-x(-r^*+r)y \in SKS$.
Using the claim we get: for every $r \in R$, $-(xy-yx)r^*=-\beta+\alpha \in SKS$. But $R=-R^*$, so for every $r \in R$,
$(xy-yx)r \in SKS$.
(Of course, from the first place we could take $r^*$ instead of $r$, and get, without using $R=-R^*$, that $(xy-yx)r \in SKS$).
Therefore, $(xy-yx)R \subseteq SKS$.
\eit
\end{proof}

If $S$ is a noncommutative set (in Lemma \ref{first lemma (xy-yx)r in SKS. Again x,y in S} we have not demanded that $S$ is a noncommutative set), then by definition exist $x,y \in S$ such that $xy-yx \neq 0$, therefore $SKS$ contains a non-zero right ideal of $R$, namely $(xy-yx)R$ where $x,y \in S$ with $xy-yx\neq 0$ (clearly, $0 \neq (xy-yx)1 \in (xy-yx)R$).

\begin{lem}\label{second lemma r(xy-yx)t in SKS. Again x,y in S}
Let $R$ be an associative unital $F$-algebra with $\Char(F)\neq 2$. Let $*$ be an involution on $R$ of the first kind and $S \subsetneq R$. Let $x,y \in S$. Then for every $r,u \in R$, $u(xy-yx)r \in SKS$. In other words, the two-sided ideal $R(xy-yx)R$ is contained in $SKS$.
\end{lem}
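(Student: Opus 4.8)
The plan is to follow verbatim the strategy of the proof of Lemma~\ref{second lemma r(xy-yx)t in S^3}, replacing the role played there by $S^3$ with $SKS$. As usual there are two cases. If $xy-yx=0$ then $u(xy-yx)r=0\in SKS$ for all $r,u\in R$ and we are done. So assume $xy-yx\neq 0$; the whole argument then rests on two ingredients: the already-proved Lemma~\ref{first lemma (xy-yx)r in SKS. Again x,y in S}, which gives $(xy-yx)R\subseteq SKS$, and a \emph{transport claim} that lets one move a ring element across an element of $SKS$ while staying inside $SKS$.

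The transport claim I would establish is: for every $u\in R$ and every $w\in SKS$, $wu+u^{*}w\in SKS$. This is the exact analogue of the Jordan-type identity $wu+u^{*}w\in S^{3}$ used in Lemma~\ref{second lemma r(xy-yx)t in S^3}, and reflects the fact that $(SKS)^{*}=-SKS$. Since $SKS$ is additively generated by the monomials $skt$ with $s,t\in S$ and $k\in K$, it suffices to check the claim for $w=skt$. Here I would use the identity $sktu+u^{*}skt=sk(tu+u^{*}t)+(u^{*}s+su)kt-s(ku^{*}+uk)t$, obtained by adding and subtracting the two cross terms $sku^{*}t$ and $sukt$. One then checks the parities of the three new factors: $tu+u^{*}t\in S$ and $u^{*}s+su\in S$ (because $s,t\in S$), while $ku^{*}+uk\in K$ (because $k\in K$). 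Consequently each of the three summands lies in $SKS$, proving the claim.

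With the claim in hand the proof closes exactly as in Lemma~\ref{second lemma r(xy-yx)t in S^3}. By Lemma~\ref{first lemma (xy-yx)r in SKS. Again x,y in S}, the element $w:=(xy-yx)r$ lies in $SKS$ for every $r\in R$; applying the transport claim gives $((xy-yx)r)u+u^{*}(xy-yx)r\in SKS$ for all $r,u\in R$. But $((xy-yx)r)u=(xy-yx)(ru)\in SKS$ by Lemma~\ref{first lemma (xy-yx)r in SKS. Again x,y in S} once more, so subtracting yields $u^{*}(xy-yx)r\in SKS$. Finally, since $*$ is a bijection of $R$, as $u$ ranges over $R$ so does $u^{*}$; hence $u(xy-yx)r\in SKS$ for all $r,u\in R$, that is, $R(xy-yx)R\subseteq SKS$.

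The only genuinely content-bearing point, and the step I would treat most carefully, is the parity bookkeeping in the transport claim, specifically the verification that $ku^{*}+uk$ is \emph{skew} rather than symmetric. It is precisely this sign that keeps the middle factor in $K$ and the whole monomial $s(ku^{*}+uk)t$ in $SKS$; were it symmetric, the term would land in $S^{3}$ instead and the containment in $SKS$ would break. Everything else is the same routine add-and-subtract-cross-terms manipulation already used repeatedly in the paper.
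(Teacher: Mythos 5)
Your proposal is correct and follows essentially the same route as the paper's own proof: the same two-case split, the same transport claim that $wu+u^{*}w\in SKS$ verified on generators $skt$ via the identity $sktu+u^{*}skt=sk(tu+u^{*}t)+(u^{*}s+su)kt-s(ku^{*}+uk)t$, and the same closing argument combining this with Lemma \ref{first lemma (xy-yx)r in SKS. Again x,y in S} and the surjectivity of $*$. Your emphasis on checking that $ku^{*}+uk$ is skew (so the middle factor stays in $K$) is exactly the parity bookkeeping the paper relies on.
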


\begin{proof}
There are two options:
\bit
\item $xy-yx=0$: Then, trivially, for every $r,u \in R$ $u(xy-yx)r=0 \in SKS$.
\item $xy-yx\neq 0$:

Claim: for every $u \in R$ and $w \in SKS$, $wu+u^*w \in SKS$.
It is enough to show that for every $a,c \in S$ and $b \in K$, $abcu+u^*abc \in SKS$
(indeed, if for every $a_i,c_i \in S$ and $b_i \in K$, $a_ib_ic_iu+u^*a_ib_ic_i \in SKS$,
then taking $SKS \ni w=\sum a_ib_ic_i$, we get $wu+u^*w=(\sum a_ib_ic_i)u+u^*(\sum a_ib_ic_i)=\sum (a_ib_ic_iu+u^*a_ib_ic_i)$, so since each $a_ib_ic_iu+u^*a_ib_ic_i \in SKS$, we get $wu+u^*w=\ldots=\sum (a_ib_ic_iu+u^*a_ib_ic_i) \in SKS$).
Now, $abcu+u^*abc=ab(cu+u^*c)+(u^*a+au)bc-abu^*c-aubc=ab(cu+u^*c)+(u^*a+au)bc-a(bu^*+ub)c$.
Clearly, $cu+u^*c, u^*a+au \in S$ and $bu^*+ub \in K$, concluding that $abcu+u^*abc=\ldots=ab(cu+u^*c)+(u^*a+au)bc-a(bu^*+ub)c \in SKS$.
So we have proved the claim that for every $u \in R$ and $w \in SKS$, $wu+u^*w \in SKS$.
Next, the above lemma, Lemma \ref{first lemma (xy-yx)r in SKS. Again x,y in S}, says that for every $r \in R$,
$(xy-yx)r \in SKS$, so from the claim just seen (with $w=(xy-yx)r$), for every $u \in R$ and for every $r \in R$,
$((xy-yx)r)u+u^*(xy-yx)r \in SKS$.
Use again the above lemma to get that $((xy-yx)r)u=(xy-yx)ru \in SKS$.
Therefore, for every $u \in R$ and for every $r \in R$, $u^*(xy-yx)r in SKS$. Obviously, since $R=R^*$ we get that
for every $u \in R$ and for every $r \in R$, $u(xy-yx)r in SKS$. So $R(xy-yx)R \subseteq SKS$.
\eit
\end{proof}

If $S$ is a noncommutative set (in Lemma \ref{second lemma r(xy-yx)t in SKS. Again x,y in S} we have not demanded that $S$ is a noncommutative set), then by definition exist $x,y \in S$ such that $xy-yx \neq 0$, therefore $SKS$ contains a non-zero two-sided ideal of $R$, namely $R(xy-yx)R$ where $x,y \in S$ with $xy-yx\neq 0$.

We proceed to a theorem similar to Theorem \ref{two sided in S^3}.

\begin{thm}\label{two sided in SKS}
Let $R$ be an associative unital $F$-algebra with $\Char(F)\neq 2$. Let $*$ be an involution on $R$ of the first kind and $S \subsetneq R$. Assume $S$ is not a commutative set. Then there exists a non-zero two-sided ideal of $R$ which is contained in $SKS$.
\end{thm}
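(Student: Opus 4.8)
The plan is to mirror exactly the proof of Theorem \ref{two sided in S^3}, since all the genuine work has already been carried out in the two preceding lemmas. First I would invoke the hypothesis that $S$ is not a commutative set: by the very definition of a non-commutative set, there exist $x,y \in S$ with $xy-yx \neq 0$. This is the only place the non-commutativity assumption enters, and it guarantees that the element $xy-yx$ we feed into the machinery is actually nonzero.

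Next I would apply Lemma \ref{second lemma r(xy-yx)t in SKS. Again x,y in S} to this particular pair $x,y \in S$. That lemma asserts that for every $r,u \in R$ one has $u(xy-yx)r \in SKS$, i.e.\ the two-sided ideal $R(xy-yx)R$ is contained in $SKS$. So the candidate ideal $I := R(xy-yx)R$ is automatically inside $SKS$, with no further computation needed.

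Finally I would check that $I$ is a \emph{non-zero} two-sided ideal. It is a two-sided ideal by construction (it is generated as a two-sided ideal by the single element $xy-yx$). It is non-zero because $R$ is unital: writing $xy-yx = 1\cdot(xy-yx)\cdot 1$ exhibits $xy-yx$ itself as an element of $R(xy-yx)R$, and $xy-yx \neq 0$ by our choice of $x,y$. Hence $0 \neq xy-yx \in I \subseteq SKS$, which is exactly the desired conclusion.

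Consequently the proof is essentially a one-line deduction, and I expect no real obstacle here; the content resides entirely in Lemma \ref{first lemma (xy-yx)r in SKS. Again x,y in S} and Lemma \ref{second lemma r(xy-yx)t in SKS. Again x,y in S}. The only subtlety worth a second glance is the non-vanishing of $I$, which hinges on the presence of the identity $1$ in $R$ (so that $xy-yx$ lies in $R(xy-yx)R$); this is precisely the same remark that was made immediately after Lemma \ref{second lemma r(xy-yx)t in SKS. Again x,y in S}, and it is the step one should not skip. The proof I would write is therefore simply: \emph{Follows immediately from Lemma \ref{second lemma r(xy-yx)t in SKS. Again x,y in S}, as was explained immediately after it.}
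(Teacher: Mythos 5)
Your proposal is correct and follows exactly the same route as the paper: the paper's proof is the one-line deduction from Lemma \ref{second lemma r(xy-yx)t in SKS. Again x,y in S}, with the non-vanishing of $R(xy-yx)R$ justified precisely as you do via $xy-yx=1(xy-yx)1$ in the remark following that lemma. Nothing is missing.
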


\begin{proof}
Follows immediately form Lemma \ref{second lemma r(xy-yx)t in SKS. Again x,y in S}, as was explained immediately after it.
\end{proof}

Theorem \ref{two sided in SKS} immediately implies the following:

\begin{cor}\label{simple SKS=R}
Let $R$ be a simple associative unital $F$-algebra with $\Char(F)\neq 2$. Let $*$ be an involution on $R$ of the first kind and $S \subsetneq R$. Assume $S$ is not a commutative set. Then $SKS=R$.
\end{cor}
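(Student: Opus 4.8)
The plan is to invoke Theorem \ref{two sided in SKS} and then let simplicity of $R$ pin down the ideal it produces. First I would check that the hypotheses of the corollary, apart from simplicity, are precisely those of Theorem \ref{two sided in SKS}: an associative unital $F$-algebra with $\Char(F)\neq 2$, an involution $*$ of the first kind, $S \subsetneq R$, and $S$ not a commutative set. Hence that theorem applies and furnishes a non-zero two-sided ideal $I$ of $R$ with $I \subseteq SKS$.

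Next I would use that $R$ is simple. A simple ring has no two-sided ideals other than $0$ and $R$; since $I$ is non-zero, we must have $I = R$. Combining this with $I \subseteq SKS$ gives $R \subseteq SKS$. The reverse inclusion $SKS \subseteq R$ is automatic, since $SKS$ is by definition an additive subgroup of $R$. Therefore $SKS = R$, as claimed.

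There is no genuine obstacle here: all the substantive work has already been carried out in Lemma \ref{first lemma (xy-yx)r in SKS. Again x,y in S} and Lemma \ref{second lemma r(xy-yx)t in SKS. Again x,y in S}, which together show $R(xy-yx)R \subseteq SKS$, and this has been packaged into Theorem \ref{two sided in SKS}. The corollary is the routine step of feeding simplicity into that containment, exactly parallel to how Corollary \ref{simple S^3=R} is deduced from Theorem \ref{two sided in S^3}. The one point worth keeping in mind is that the non-commutativity of $S$ is what makes the argument non-vacuous: it guarantees the existence of $x,y \in S$ with $xy-yx \neq 0$, so that the ideal $R(xy-yx)R$ really is non-zero, since $0 \neq xy-yx = 1\cdot(xy-yx)\cdot 1$ lies in it. Without that, simplicity would have nothing to act on.
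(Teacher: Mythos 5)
Your proposal is correct and follows exactly the paper's route: the paper also deduces the corollary directly from Theorem \ref{two sided in SKS}, which supplies a non-zero two-sided ideal inside $SKS$, and then lets simplicity force that ideal to be all of $R$. Your additional remarks about why the ideal $R(xy-yx)R$ is non-zero and about the parallel with Corollary \ref{simple S^3=R} match the paper's own commentary surrounding Lemma \ref{second lemma r(xy-yx)t in SKS. Again x,y in S}.
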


\begin{proof}
Follows at once from Theorem \ref{two sided in SKS}.
\end{proof}

In view of Corollary \ref{cor of centS in Z}, one has:

\begin{thm}\label{SKS: simple with dim > 4}
Let $R$ be a simple associative unital $F$-algebra with $\Char(F)\neq 2$. Let $*$ be an involution on $R$ of the first kind such that $Z(R) \subsetneq S$ (and not just $Z(R) \subseteq S$) and $S \subsetneq R$. Assume that $dim_{Z(R)}R > 4$. Then $SKS=R$.
\end{thm}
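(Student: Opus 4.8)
The plan is to deduce this statement as an immediate consequence of two results already established in the excerpt, following exactly the template of the proof of Theorem \ref{simple with dim > 4} (which gives the analogous conclusion $S^3=R$ under the same hypotheses). The only work is to verify that the hypotheses here feed precisely into Corollary \ref{simple SKS=R}, whose conclusion is the desired equality $SKS=R$.

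First I would use the dimension hypothesis to upgrade the structural assumption on the center into noncommutativity of $S$. By hypothesis we have the strict inclusion $Z(R) \subsetneq S$, so in particular $Z(R) \neq S$. Since $R$ is simple, $\Char(F)\neq 2$, $*$ is of the first kind, $S \subsetneq R$, and $\dim_{Z(R)}R > 4$, all the hypotheses of Corollary \ref{cor of centS in Z} are met, and that corollary asserts the equivalence ``$S$ is a commutative set $\Leftrightarrow$ $Z(R)=S$''. Reading this equivalence contrapositively: because $Z(R)=S$ fails, $S$ cannot be a commutative set. This is the single substantive step, and it rests entirely on Herstein's theorem \ref{centS in Z} packaged inside Corollary \ref{cor of centS in Z}; the role of the bound $\dim_{Z(R)}R > 4$ is exactly to make that packaging available.

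Next I would simply invoke Corollary \ref{simple SKS=R}. Its hypotheses are that $R$ is a simple associative unital $F$-algebra with $\Char(F)\neq 2$, that $*$ is an involution of the first kind, that $S \subsetneq R$, and that $S$ is not a commutative set. The first three are given directly in the present statement, and the last was just established in the previous step. Corollary \ref{simple SKS=R} then yields $SKS=R$, completing the argument.

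I do not expect any genuine obstacle here: the theorem is a routine corollary obtained by chaining Corollary \ref{cor of centS in Z} (to pass from $Z(R)\subsetneq S$ to noncommutativity of $S$) into Corollary \ref{simple SKS=R} (to pass from noncommutativity of $S$ to $SKS=R$). The one point worth flagging for the reader is that the dimension hypothesis $\dim_{Z(R)}R > 4$ is used \emph{only} to secure the noncommutativity of $S$; the underlying identity $SKS=R$ of Corollary \ref{simple SKS=R} holds with no restriction on $\dim_{Z(R)}R$, so this theorem is strictly the ``dimension-flavored'' repackaging of that corollary, mirroring how Theorem \ref{simple with dim > 4} repackages Corollary \ref{simple S^3=R}.
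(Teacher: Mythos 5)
Your proposal is correct and matches the paper's own proof exactly: the paper likewise uses Corollary \ref{cor of centS in Z} to convert $Z(R)\subsetneq S$ (via the dimension hypothesis) into noncommutativity of $S$, and then applies Corollary \ref{simple SKS=R} to conclude $SKS=R$. Your remark that the dimension bound serves only to secure noncommutativity of $S$ is also consistent with the paper's framing.
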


\begin{proof}
By assumption, $Z(R) \subsetneq S$, so from Corollary \ref{cor of centS in Z} we get that $S$ is noncommutative.
Now Corollary \ref{simple SKS=R} implies that $SKS=R$.
\end{proof}


Remember that the first criterion for $S^2 \subseteq S^3$ to equal $R$ says the following: if there exist $x,y \in S$ such that $xy-yx \neq 0$ and for every $s \in S$ $xsy \in S^2$, then $S^2=R$.
We have seen that this condition is satisfied by matrices.
Here we consider $KS+SK \subseteq SKS$.
We show that if there exist $x,y \in S$ such that $xy-yx \neq 0$ and for every $k \in K$ $xky \in KS+SK$, then $R=KS+SK$.
However, it is already not satisfied by matrices with the transpose involution. Therefore, it is probably less reasonable for a simple ring $R$ to equal $KS+SK$ then to equal $S^2$.

\begin{lem}\label{first lemma SK+KS=R}
Let $R$ be an associative unital $F$-algebra with $\Char(F)\neq 2$. Let $*$ be an involution on $R$ of the first kind and $S \subsetneq R$. Let $x,y \in S$ such that for every $k \in K$, $xky \in KS+SK$.
Then for every $r \in R$, $(xy-yx)r \in KS+SK$. In other words, the right ideal $(xy-yx)R$ is contained in $KS+SK$.
\end{lem}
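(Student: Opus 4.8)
The plan is to follow the template of Lemma~\ref{first lemma (xy-yx)r in SKS. Again x,y in S} almost verbatim, because the target set $KS+SK$ behaves like $SKS$ did there: it automatically contains $K$, since $1\in Z(R)\subseteq S$ gives $K=K\cdot 1\subseteq KS\subseteq KS+SK$. First I would dispose of the trivial case $xy-yx=0$, where $(xy-yx)r=0\in KS+SK$ for every $r$. For the case $xy-yx\neq 0$ and an arbitrary $r\in R$, I would introduce the two auxiliary elements $\alpha=\alpha(r)=rxy-xyr^*$ and $\beta=\beta(r)=rxy-yxr^*$, obtained by adding and subtracting $yxr^*$, and record the identity $\alpha=\beta-(xy-yx)r^*$.

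The heart of the argument is the claim that $\beta\in K$ and $\alpha\in KS+SK$. For $\beta$, since $x,y\in S$ one computes $\beta^*=(rxy)^*-(yxr^*)^*=yxr^*-rxy=-\beta$, so $\beta\in K\subseteq KS\subseteq KS+SK$. For $\alpha$, I would use the same algebraic rearrangement as in the $SKS$ lemma, namely $\alpha=rxy-xyr^*=(rx-xr^*)y-x(yr^*-ry)-x(r-r^*)y$. The three ``coefficient'' elements $rx-xr^*$, $yr^*-ry$, and $r-r^*$ are all skew-symmetric (the first two because $x,y\in S$, the last because it is a skew-trace). Hence $(rx-xr^*)y\in KS$ and $-x(yr^*-ry)\in SK$, so those two terms lie in $KS+SK$ with no hypothesis needed. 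The only term requiring the hypothesis is the last: since $r-r^*\in K$, the assumption $xKy\subseteq KS+SK$ gives $-x(r-r^*)y=x(r^*-r)y\in KS+SK$. Adding the three terms places $\alpha\in KS+SK$.

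Combining the claim with the identity yields $(xy-yx)r^*=\beta-\alpha\in KS+SK$ for every $r\in R$. Since $*$ is a bijection of $R$ (equivalently $R=R^*$), replacing $r$ by $r^*$ gives $(xy-yx)r\in KS+SK$ for every $r\in R$, that is, $(xy-yx)R\subseteq KS+SK$. I do not expect any genuine obstacle: the proof is a direct transcription of the earlier $SKS$ argument, and the only point that needs attention is choosing the ``minus'' splitting (rather than the ``plus'' splitting used for $S^2$ in Lemma~\ref{first lemma xsy in S^2 then (xy-yx)r in S^2}), precisely so that $\beta$ falls into $K$ and the cross term appears as an element of $xKy$, which is exactly where the hypothesis $xKy\subseteq KS+SK$ is consumed.
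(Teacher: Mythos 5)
Your proposal is correct and matches the paper's own proof essentially line for line: the same split $rxy-xyr^*=(rx-xr^*)y-x(yr^*-ry)-x(r-r^*)y$, the same identification of $\beta=rxy-yxr^*$ as skew-symmetric, and the hypothesis $xKy\subseteq KS+SK$ consumed on exactly the same cross term $x(r^*-r)y$. No gaps.
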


\begin{proof}
Let $x,y \in S$ such that for every $k \in K$, $xky \in KS+SK$.

There are two options: \bit
\item $xy-yx=0$: Then, trivially, for every $r \in R$ $(xy-yx)r=0 \in KS+SK$.
\item $xy-yx\neq 0$: For every $r\in R$, $rxy-xyr^*=rxy-yxr^*+yxr^*-xyr^*$.
Let $\alpha=\alpha(r)=rxy-xyr^*$, $\beta=\beta(r)=rxy-yxr^*$. We will just write $\alpha$ and $\beta$, instead of $\alpha(r)$ and $\beta(r)$. So for every $r \in R$, $\alpha=\beta-(xy-yx)r^*$.

Claim: $\alpha \in KS+SK$, $\beta \in K$.
Indeed, obviously $\beta \in K$, since $(\beta)^*=(rxy-yxr^*)*=yxr^*-rxy=-(rxy-yxr^*)=-(\beta)$.
As for $\alpha$: $\alpha=rxy-xyr^*=(rx-xr^*)y-x(yr^*-ry)+xr^*y-xry=(rx-xr^*)y-x(yr^*-ry)-x(-r^*+r)y$.
$(rx-xr^*), (yr^*-ry) \in K$, so $(rx-xr^*)y \in KS$ and $-x(yr^*-ry) \in SK$.
By assumption, $x,y \in S$ are such that for every $k \in K$, $xky \in KS+SK$, hence $-x(-r^*+r)y=x(r^*-r)y \in KS+SK$.
Hence, $\alpha=rxy-xyr^*=\ldots=(rx-xr^*)y-x(yr^*-ry)-x(-r^*+r)y \in KS+SK$.
Using the claim we get: for every $r \in R$, $-(xy-yx)r^*=-\beta+\alpha \in KS+SK$. But $R=-R^*$, so for every $r \in R$, $(xy-yx)r \in KS+SK$.
(Of course, from the first place we could take $r^*$ instead of $r$, and get, without using $R=-R^*$, that $(xy-yx)r \in KS+SK$).
Therefore, $(xy-yx)R \subseteq KS+SK$.
\eit
\end{proof}

If $S$ is a noncommutative set (in Lemma \ref{first lemma SK+KS=R} we have not demanded that $S$ is a noncommutative set), then by definition exist $x,y \in S$ such that $xy-yx \neq 0$.
However, in contrast to what we have seen above about $SKS$ (noncommutativity of $S$ implies that $SKS$ contains a non-zero right ideal of $R$), noncommutativity of $S$ does not imply that $KS+SK$ contains a non-zero right ideal of $R$.
But, if those $x,y \in S$ such that $xy-yx \neq 0$ also satisfy $xKy \subseteq KS+SK$ (this means that for every $k \in K$, $xky \in KS+SK$), then $KS+SK$ contains a non-zero right ideal of $R$ (namely, $(xy-yx)R$ where $x,y \in S$ with $xy-yx\neq 0$ and $xKy \subseteq KS+SK$).

\begin{lem}\label{second lemma SK+KS=R}
Let $R$ be an associative unital $F$-algebra with $\Char(F)\neq 2$. Let $*$ be an involution on $R$ of the first kind and $S \subsetneq R$. Let $x,y \in S$ such that for every $k \in K$, $xky \in KS+SK$. Then for every $r,u \in R$, $u(xy-yx)r \in KS+SK$. In other words, the two-sided ideal $R(xy-yx)R$ is contained in $KS+SK$.
\end{lem}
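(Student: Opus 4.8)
The plan is to mimic the proof of Lemma~\ref{second lemma xsy in S^2 then r(xy-yx)t in S^2} (the corresponding ``second lemma'' for $S^2$), since $KS+SK$ is generated by products of length $2$, exactly like $S^2$. First I would dispose of the trivial case $xy-yx=0$, where $u(xy-yx)r=0\in KS+SK$ for all $r,u$. So assume $xy-yx\neq 0$.

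The heart of the argument is a single claim: $KS+SK$ is a Lie ideal of $R$, i.e. for every $u\in R$ and every $w\in KS+SK$ one has $wu-uw\in KS+SK$. By linearity it suffices to check this for a single product $w=ab$, and I would use the purely algebraic identity $abu-uab=a(bu+u^*b)-(ua+au^*)b$ (the $au^*b$ terms cancel, so this holds for all $a,b,u$). Then I would read off the symmetry types: if $b\in S$ then $bu+u^*b\in S$, while if $b\in K$ then $bu+u^*b\in K$; dually, if $a\in S$ then $ua+au^*\in S$, and if $a\in K$ then $ua+au^*\in K$. Hence for $w=ab\in KS$ (so $a\in K$, $b\in S$) both summands land in $KS$, and for $w=ab\in SK$ (so $a\in S$, $b\in K$) both land in $SK$; either way $wu-uw\in KS+SK$.

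With the claim in hand the rest follows the template verbatim. Lemma~\ref{first lemma SK+KS=R} gives $(xy-yx)r\in KS+SK$ for every $r$, so applying the claim with $w=(xy-yx)r$ yields $\big((xy-yx)r\big)u-u\big((xy-yx)r\big)\in KS+SK$. Applying Lemma~\ref{first lemma SK+KS=R} a second time (to the element $ru$ in place of $r$) gives $\big((xy-yx)r\big)u=(xy-yx)(ru)\in KS+SK$, so subtracting leaves $u(xy-yx)r\in KS+SK$. As $r,u$ were arbitrary, $R(xy-yx)R\subseteq KS+SK$.

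The only real decision is choosing the correct operation, and I expect the one genuine (if minor) obstacle to be recognizing that the $*$-twisted symmetrization $w\mapsto wu+u^*w$ used for $S^3$ and $SKS$ fails here: pushing $abu+u^*ab$ through the analogous identity leaves a leftover term $a(u+u^*)b$ with $a\in K$ and $(u+u^*)b\in S^2$, which need not lie in $KS+SK$. Switching to the commutator $wu-uw$ removes exactly this obstruction, which is why the $S^2$-style argument, rather than the $S^3$-style argument, is the right model to imitate.
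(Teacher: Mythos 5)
Your proposal is correct and follows essentially the same route as the paper: the identical trivial-case split, the same claim that $KS+SK$ is a Lie ideal verified on generators via the identity $abu-uab=a(bu+u^*b)-(ua+au^*)b$ with the same type-tracking (commutator of a $KS$-generator stays in $KS$, of an $SK$-generator stays in $SK$), and the same double application of Lemma \ref{first lemma SK+KS=R} to conclude. Your closing remark on why the $*$-twisted symmetrization $wu+u^*w$ fails here is a correct observation that the paper does not spell out, but it does not change the argument.
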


\begin{proof}
Let $x,y \in S$ such that for every $k \in K$, $xky \in KS+SK$.
There are two options: \bit
\item $xy-yx=0$: Then, trivially, for every $r,u \in R$ $u(xy-yx)r=0 \in KS+SK$.
\item $xy-yx\neq 0$:

Claim: for every $u \in R$ and $w \in KS+SK$, $wu-uw \in KS+SK$.
It is enough to show that for every $a \in S$ and $b \in K$, $abu-uab \in SK$ and for every $c \in K$ and $d \in S$,
$cdu-ucd \in KS$. Indeed, if for every $a_i,d_i \in S$ and $b_i, c_i \in K$, $a_ib_iu-ua_ib_i \in SK$ and

$c_id_iu-uc_id_i \in KS$, then taking $KS+SK \ni w=\sum_{1 \leq i \leq m} a_ib_i+c_id_i$ (we can take the same $m$

for both $\sum a_ib_i \in SK$ and $\sum c_id_i \in KS$, just add zeros to the shorter sum), we get $wu-uw=(\sum
a_ib_i+c_id_i)u-u(\sum a_ib_i+c_id_i)=\sum [(a_ib_i+c_id_i)u-u(a_ib_i+c_id_i)]=\sum
[(a_ib_i)u+(c_id_i)u-u(a_ib_i)-u(c_id_i)]=\sum [(a_ib_i)u-u(a_ib_i)+(c_id_i)u-u(c_id_i)]=\sum
[(a_ib_i)u-u(a_ib_i)]+\sum [(c_id_i)u-u(c_id_i)]$, so since $a_ib_iu-ua_ib_i \in SK$ and $c_id_iu-uc_id_i \in KS$, we
get $wu-uw=\ldots=\sum [(a_ib_i)u-u(a_ib_i)]+\sum [(c_id_i)u-u(c_id_i)] \in KS+SK$.
Now, $abu-uab=a(bu+u^*b)-(ua+au^*)b$.
Clearly, $(bu+u^*b) \in K$ and $-(ua+au^*) \in S$, concluding that $abu-uab=a(bu+u^*b)-(ua+au^*)b \in SK$.
$cdu-ucd=c(du+u^*d)-(uc+cu^*)d$.
Clearly, $(du+u^*d) \in S$ and $-(uc+cu^*) \in K$, concluding that $cdu-ucd=c(du+u^*d)-(uc+cu^*)d \in KS$.
So we have proved the claim that for every $u \in R$ and $w \in KS+SK$, $wu-uw \in KS+SK$.
Next, the above lemma, Lemma \ref{first lemma SK+KS=R}, says that for every $r \in R$, $(xy-yx)r \in KS+SK$, so from the claim just seen (with $w=(xy-yx)r$), for every $u \in R$ and for every $r \in R$, $((xy-yx)r)u-u((xy-yx)r) \in KS+SK$.
Use again the above lemma to get that $((xy-yx)r)u=(xy-yx)ru \in KS+SK$.
Therefore, for every $u \in R$ and for every $r \in R$, $u((xy-yx)r) in KS+SK$. So $R(xy-yx)R \subseteq KS+SK$.
\eit
\end{proof}

If $S$ is a noncommutative set (in Lemma \ref{second lemma SK+KS=R} we have not demanded that $S$ is a noncommutative set), then by definition exist $x,y \in S$ such that $xy-yx \neq 0$.
However, in contrast to what we have seen above about $SKS$ (noncommutativity of $S$ implies that $SKS$ contains a non-zero two-sided ideal of $R$), noncommutativity of $S$ does not imply that $KS+SK$ contains a non-zero two-sided ideal of $R$.
But, if those $x,y \in S$ such that $xy-yx \neq 0$ also satisfy $xKy \subseteq KS+SK$, then $KS+SK$ contains a non-zero two-sided ideal of $R$ (namely, $R(xy-yx)R$ where $x,y \in S$ with $xy-yx\neq 0$ and $xKy \subseteq KS+SK$).

Next, similarly to Theorem \ref{two sided in SKS} which dealt with $SKS$, we have the following theorem which deals with $KS+SK$.

\begin{thm}\label{xky in SK+KS then two sided in SK+KS}
Let $R$ be an associative unital $F$-algebra with $\Char(F)\neq 2$. Let $*$ be an involution on $R$ of the first kind and $S \subsetneq R$. Assume $S$ is not a commutative set. Also assume that there exist $x,y \in S$ such that $xy-yx \neq 0$ and for every $k \in K$, $xky \in KS+SK$. Then there exists a non-zero two-sided ideal of $R$ which is contained in $KS+SK$.
\end{thm}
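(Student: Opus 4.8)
The plan is to deduce this statement from Lemma \ref{second lemma SK+KS=R} in exactly the way that Theorem \ref{xsy in S^2 then two sided in S^2} was deduced from Lemma \ref{second lemma xsy in S^2 then r(xy-yx)t in S^2}. By hypothesis there exist $x,y \in S$ with $xy-yx \neq 0$ and with $xky \in KS+SK$ for every $k \in K$, i.e. $xKy \subseteq KS+SK$. The first step is to apply Lemma \ref{second lemma SK+KS=R} to precisely these $x,y$: that lemma asserts that, under the assumption $xKy \subseteq KS+SK$, the two-sided ideal $R(xy-yx)R$ is contained in $KS+SK$. Thus $R(xy-yx)R$ is a two-sided ideal of $R$ sitting inside $KS+SK$.

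The second step is to verify that this ideal is non-zero. Since $*$ is of the first kind we have $1 \in Z(R) \subseteq S$, so $xy-yx = 1(xy-yx)1 \in R(xy-yx)R$; as $xy-yx \neq 0$ by hypothesis, we conclude $R(xy-yx)R \neq 0$. Taking $R(xy-yx)R$ as the required ideal finishes the argument.

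There is essentially no obstacle remaining at this point, because all the genuine content was already carried out in the two preceding lemmas. The real work is hidden in Lemma \ref{first lemma SK+KS=R}, whose ``add and subtract $yxr^*$'' manipulation shows that the right ideal $(xy-yx)R$ already lands in $KS+SK$ once $xKy \subseteq KS+SK$, and in the Lie-ideal-type claim of Lemma \ref{second lemma SK+KS=R} (for $u \in R$ and $w \in KS+SK$ one has $wu-uw \in KS+SK$), which is exactly what upgrades the right ideal to a two-sided ideal still contained in $KS+SK$. I would also remark that the hypothesis ``$S$ is not a commutative set'' is not actually needed for the proof, since the existence of $x,y \in S$ with $xy-yx \neq 0$ is postulated outright; it is retained only for parallelism with Theorem \ref{two sided in SKS} and Theorem \ref{xsy in S^2 then two sided in S^2}.
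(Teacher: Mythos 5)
Your proposal is correct and matches the paper's own proof exactly: the paper likewise derives the theorem by applying Lemma \ref{second lemma SK+KS=R} to the hypothesized $x,y$ to get $R(xy-yx)R \subseteq KS+SK$, and observes that this ideal is non-zero because $0 \neq xy-yx = 1(xy-yx)1$. Your side remark that the noncommutativity of $S$ is redundant given the explicit existence hypothesis is also accurate and consistent with the paper's own observations in analogous places.
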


\begin{proof}
Follows immediately from Lemma \ref{second lemma SK+KS=R}, as was explained immediately after it.
\end{proof}

\begin{cor}\label{simple SK+KS=R}
Let $R$ be a simple associative unital $F$-algebra with $\Char(F)\neq 2$. Let $*$ be an involution on $R$ of the first kind and $S \subsetneq R$. Assume $S$ is not a commutative set. Also assume that there exist $x,y \in S$ such that $xy-yx \neq 0$ and for every $k \in K$, $xky \in KS+SK$. Then $KS+SK=R$.
\end{cor}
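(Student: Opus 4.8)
The plan is to deduce this corollary immediately from the preceding Theorem \ref{xky in SK+KS then two sided in SK+KS}, in exactly the same way that Corollary \ref{xsy in S^2 and simple then S^2=R} was deduced from Theorem \ref{xsy in S^2 then two sided in S^2}. The hypotheses of Corollary \ref{simple SK+KS=R} are precisely those of Theorem \ref{xky in SK+KS then two sided in SK+KS} (namely $S$ not commutative, together with the existence of $x,y\in S$ with $xy-yx\neq 0$ and $xKy\subseteq KS+SK$), so that theorem applies verbatim and hands us a non-zero two-sided ideal $I$ of $R$ with $I\subseteq KS+SK$.

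From there I would simply invoke simplicity. Since $R$ is simple, its only two-sided ideals are $0$ and $R$; as $I\neq 0$, we must have $I=R$. Combining this with $I\subseteq KS+SK\subseteq R$ forces $KS+SK=R$. No further computation is required at the level of the corollary itself.

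If one wished to unwind the argument, the genuine work sits one and two levels below, in Lemma \ref{first lemma SK+KS=R} and Lemma \ref{second lemma SK+KS=R}. There, the right ideal $(xy-yx)R$ is first shown to lie in $KS+SK$ by the $\alpha$--$\beta$ splitting (writing $rxy-xyr^*=\beta-(xy-yx)r^*$, checking $\beta\in K\subseteq KS$ and $\alpha\in KS+SK$ via the assumption $xKy\subseteq KS+SK$), and then this is promoted to the two-sided ideal $R(xy-yx)R$ using the key fact that $KS+SK$ is a Lie ideal of $R$, i.e.\ $wu-uw\in KS+SK$ for all $w\in KS+SK$ and $u\in R$. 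I expect the only subtle point in the whole development to be that Lie-ideal closure, since it relies on the identities $abu-uab=a(bu+u^*b)-(ua+au^*)b$ landing the two summand-types correctly into $SK$ and $KS$ respectively; but this has already been established in Lemma \ref{second lemma SK+KS=R}, so at the level of Corollary \ref{simple SK+KS=R} there is no remaining obstacle.

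\begin{proof}
By Theorem \ref{xky in SK+KS then two sided in SK+KS}, there exists a non-zero two-sided ideal $I$ of $R$ with $I\subseteq KS+SK$. Since $R$ is simple and $I\neq 0$, we have $I=R$. Hence $R=I\subseteq KS+SK\subseteq R$, so $KS+SK=R$.
\end{proof}
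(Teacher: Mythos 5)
Your proof is correct and is exactly the paper's argument: the paper also deduces the corollary immediately from Theorem \ref{xky in SK+KS then two sided in SK+KS} by simplicity. Your extra remarks about where the real work lies (the two lemmas and the Lie-ideal closure of $KS+SK$) accurately reflect the paper's structure.
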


\begin{proof}
Follows at once from Theorem \ref{xky in SK+KS then two sided in SK+KS}.
\end{proof}

\begin{remark}\label{SK+KS not matrix}

As was already mentioned, the condition that there exist $x,y \in S$ such that $xy-yx \neq 0$ and for every $k \in K$ $xky \in KS+SK$, is probably not satisfied by most such simple rings.
That condition is necessarily not satisfied by $R=\M_n(F)$ where $\Char(F) \neq 2$ and $n \geq 2$.
Reason: If it was satisfied, then from Corollary \ref{simple SK+KS=R} $KS+SK=\M_n(F)$.
However, $KS+SK \subsetneq \M_n(F)$; 
Indeed, One can check that $SK \subseteq \{a \in \M_n(F) | \trace(a)=0 \}$ 
(one can check that $KS \subseteq \{a \in \M_n(F) | \trace(a)=0 \}$ or one can just conclude that $KS \subseteq \{a \in \M_n(F) | \trace(a)=0 \}$ from $SK \subseteq \{a \in \M_n(F) | \trace(a)=0 \}$ and the known fact that $\trace(ab)=\trace(ba)$ for any $a,b \in \M_n(F)$, in particular one can take $a \in S, b \in K$).
Therefore, $KS+SK \subseteq \{a \in \M_n(F) | \trace(a)=0 \}$, so $KS+SK$ must be strictly contained in $\M_n(F)$.
As an exercise, one can take $n=2$, and see that for any choice of $x,y \in S, k \in K$ it is impossible to have $xy-yx \neq 0$ and $xky \in KS+SK \subseteq \{a \in \M_n(F) | \trace(a)=0 \}$.
\end{remark}

\subsection{Results about \texorpdfstring{$S^2K, KS^2$}{S2K, KS2}}

Here, in order to have a theorem for a simple ring, we are not willing to find $x,y \in S$ such that $xy \pm yx \neq 0$ (or $x,y \in K$ such that $xy \pm yx \neq 0$), instead we wish to find $s \in S, k \in K$ such that $sk-ks \neq 0$.
So it seems that commutativity or noncommutativity of $S$ will not concern us.
However it will, as the following lemma shows:

\begin{lem}\label{lemma for S^2K and KS^2}
Let $R$ be a simple associative unital $F$-algebra with $\Char(F)\neq 2$. Let $*$ be an involution on $R$ of the first kind and $S \subsetneq R$. Also assume that $dim_{Z(R)}R > 4$.
Then: $S$ is a noncommutative set $\Leftrightarrow$ there exist $s \in S, k \in K$ such that $sk-ks \neq 0$.
\end{lem}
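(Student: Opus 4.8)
The plan is to prove both implications by contraposition, in each case leaning on the two facts about $\Cent(S)$ that the hypothesis $dim_{Z(R)}R > 4$ makes available: Herstein's Theorem \ref{centS in Z}, giving $\Cent(S) = Z(R)$, and its Corollary \ref{cor of centS in Z}, giving that $S$ is commutative if and only if $Z(R) = S$. Throughout I would use the elementary remark that $S \cap K = 0$ (if $a = a^* = -a$ then $2a = 0$, so $a = 0$ since $\Char(F) \neq 2$), together with the behaviour of commutators under $*$.

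For the direction ($\Leftarrow$), I would argue the contrapositive: if $S$ is commutative then $sk - ks = 0$ for every $s \in S$ and $k \in K$. Indeed, Corollary \ref{cor of centS in Z} converts commutativity of $S$ into the equality $Z(R) = S$, so every element of $S$ is central and hence commutes with every element of $R$, in particular with every $k \in K$. This direction is immediate once the corollary is invoked.

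For the direction ($\Rightarrow$), again by contraposition, I would assume that $sk = ks$ for all $s \in S$ and $k \in K$ (that is, $S$ and $K$ commute elementwise) and deduce that $S$ is commutative. The key computation is that the commutator of two symmetric elements is skew: for $s_1, s_2 \in S$ one checks $(s_1 s_2 - s_2 s_1)^* = s_2 s_1 - s_1 s_2 = -(s_1 s_2 - s_2 s_1)$, so $s_1 s_2 - s_2 s_1 \in K$. Now for any third element $s_3 \in S$, the standing assumption applied to $s_3 \in S$ and $s_1 s_2 - s_2 s_1 \in K$ gives $s_3(s_1 s_2 - s_2 s_1) = (s_1 s_2 - s_2 s_1)s_3$; thus $s_1 s_2 - s_2 s_1$ commutes with every element of $S$, i.e. $s_1 s_2 - s_2 s_1 \in \Cent(S)$. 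By Herstein's Theorem \ref{centS in Z} we have $\Cent(S) = Z(R)$, and since $*$ is of the first kind $Z(R) \subseteq S$; hence $s_1 s_2 - s_2 s_1 \in S$. But it also lies in $K$, and $S \cap K = 0$, so $s_1 s_2 - s_2 s_1 = 0$. As $s_1, s_2$ were arbitrary, $S$ is commutative.

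The only genuinely nontrivial step, and the single place where the dimension hypothesis enters, is the collapse $s_1 s_2 - s_2 s_1 \in \Cent(S) = Z(R) \subseteq S$: landing the skew commutator inside $S$ forces it to vanish. Both implications thereby rest on the hypothesis $dim_{Z(R)}R > 4$, through Theorem \ref{centS in Z} and Corollary \ref{cor of centS in Z} respectively; without control of $\Cent(S)$ the argument would not close.
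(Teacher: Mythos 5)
Your proof is correct. The backward direction coincides with the paper's: Corollary \ref{cor of centS in Z} turns commutativity of $S$ into $S=Z(R)$, after which every symmetric element is central and all commutators $sk-ks$ vanish. The forward direction, however, takes a genuinely different contrapositive route. The paper assumes $sk=ks$ for all $s\in S$, $k\in K$, concludes $K\subseteq\Cent(S)\subseteq Z(R)\subseteq S$ via Herstein's Theorem \ref{centS in Z}, hence $K\subseteq S\cap K=0$ and $R=S+K=S$, contradicting the standing hypothesis $S\subsetneq R$; note that this argument never uses the noncommutativity of $S$, so the paper in effect proves the stronger statement that under its hypotheses elements $s\in S$, $k\in K$ with $sk-ks\neq 0$ always exist. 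You instead aim the contradiction at the noncommutativity itself: the commutator $s_1s_2-s_2s_1$ of two symmetric elements is skew, and under the elementwise commutation assumption it lies in $\Cent(S)=Z(R)\subseteq S$, hence in $S\cap K=0$, forcing $S$ to be commutative. Both arguments lean on Theorem \ref{centS in Z} in the same way; yours has the merit of not invoking $S\subsetneq R$ in this direction and of pinpointing exactly where each hypothesis enters, while the paper's shorter argument yields the slightly stronger unconditional conclusion as a by-product.
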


\begin{proof}
\bit
\item Assume $S$ is a noncommutative set. If for every $s \in S, k \in K$ $sk-ks=0$, then $K \subseteq \Cent(S)$.
But the theorem of Herstein, Theorem \ref{centS in Z}, says that $\Cent(S)\subseteq Z(R)$. Therefore, $K \subseteq Z(R)$. $*$ is of the first kind ($Z(R) \subseteq S$), so $K \subseteq S$.
Hence, from $S \cap K=0$ (since $\Char(F)\neq 2$) we get $K=0$, which implies that $R=S$, a contradiction to our usual assumption that $S \subsetneq R$.
Therefore, there exist $s \in S, k \in K$ such that $sk-ks \neq 0$.
\item Assume $S$ is a commutative set. Then Corollary \ref{cor of centS in Z} shows that $S=Z(R)$, hence there is no chance to find $s \in S, k \in K$ such that $sk-ks \neq 0$.
\eit
\end{proof}

The following lemmas and theorems are very similar to ones seen thus far. But notice that now, instead of taking $x,y \in S$ we will take $x \in S$ and $y \in K$.

\begin{lem}\label{first lemma (sk-ks)r in S^2K}
Let $R$ be an associative unital $F$-algebra with $\Char(F)\neq 2$. Let $*$ be an involution on $R$ of the first kind and $S \subsetneq R$. Let $x \in S$ and $y \in K$. Then for every $r \in R$, $(xy-yx)r \in S^2K$. In other words, the right ideal $(xy-yx)R$ is contained in $S^2K$.
\end{lem}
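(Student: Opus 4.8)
The plan is to follow the template established by Lemma~\ref{first lemma (xy-yx)r in S^3} and Lemma~\ref{first lemma (xy-yx)r in SKS. Again x,y in S}, adapting the add-and-subtract device to the new parities $x\in S$ and $y\in K$. As before I would dispose of the trivial case $xy-yx=0$ immediately (then $(xy-yx)r=0\in S^2K$), and for $xy-yx\neq 0$ set, for each $r\in R$,
\[
\alpha=\alpha(r)=rxy+xyr^*, \qquad \beta=\beta(r)=rxy+yxr^*,
\]
so that $\alpha=\beta+(xy-yx)r^*$. The goal is to show $\alpha\in S^2K$ and $\beta\in K\subseteq S^2K$; then $(xy-yx)r^*=\alpha-\beta\in S^2K$, and since $R=R^*$ this yields $(xy-yx)r\in S^2K$ for every $r$ (equivalently one could start from $r^*xy+xyr$ to avoid invoking $R=R^*$).

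First I would check $\beta\in K$. Because $x\in S$ and $y\in K$ one has $(xy)^*=y^*x^*=-yx$, and a short computation gives $\beta^*=-yxr^*-rxy=-\beta$, so $\beta\in K$; and $K\subseteq S^2K$ since $1=1\cdot 1\in S^2$ forces $k=(1\cdot 1)k\in S^2K$ for every $k\in K$. This part is routine.

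The substance of the proof, and where I expect the main obstacle, is showing $\alpha\in S^2K$. Writing $r=s+k$ with $s\in S$, $k\in K$, so that $r^*=s-k$, one expands
\[
\alpha=(s+k)xy+xy(s-k)=(sxy+xys)+(kxy-xyk).
\]
The difficulty is that the ``naive'' home of several summands is wrong: $xys$ and $xyk$ sit in $SKS$ and $SK^2$ rather than in $S^2K$, so the skew monomial $y$ must be shuffled to the right-hand end. The mechanism is to exploit the parities of the Jordan-type combinations, namely $ys+sy\in K$, $kx-xk\in S$ and $ky-yk\in K$. Concretely I would rewrite $xys=x(ys+sy)-xsy$, where $x(ys+sy)\in SK$ and $xsy=(xs)y\in S^2K$, and combine this with $sxy=(sx)y\in S^2K$ to place the first bracket in $S^2K$. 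For the second bracket I would use the identity $kxy-xyk=(kx-xk)y+x(ky-yk)$, with $(kx-xk)y\in SK$ (as $kx-xk\in S$) and $x(ky-yk)\in SK$ (as $ky-yk\in K$). Since $SK\subseteq S^2K$ (because $S\subseteq S^2$), both brackets lie in $S^2K$, hence $\alpha\in S^2K$, which closes the argument. The only thing one must get right is the bookkeeping of which $\pm$ combination is symmetric and which is skew; once those signs are settled, everything else is the same additive-closure reasoning used in the earlier lemmas.
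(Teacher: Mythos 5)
Your proof is correct and takes essentially the same approach as the paper's: the identical device $\alpha(r)=rxy+xyr^*$, $\beta(r)=rxy+yxr^*$ with $\alpha=\beta+(xy-yx)r^*$, the observation $\beta\in K\subseteq S^2K$, and the same final passage via $R=R^*$. The only (immaterial) difference is in how $\alpha\in S^2K$ is verified: you first split $r=s+k$ and regroup each bracket, whereas the paper regroups directly as $\alpha=(rx+xr^*)y+x(yr^*+ry)-x(r^*+r)y$ with $rx+xr^*,\,r+r^*\in S$ and $yr^*+ry\in K$; both come down to the same parity bookkeeping.
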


\begin{proof}
Let $x \in S$ and $y \in K$.
There are two options: \bit
\item $xy-yx=0$: Then, trivially, for every $r \in R$ $(xy-yx)r=0 \in S^2K$.
\item $xy-yx\neq 0$: For every $r\in R$, $rxy+xyr^*=rxy+yxr^*-yxr^*+xyr^*$.
Let $\alpha=\alpha(r)=rxy+xyr^*$, $\beta=\beta(r)=rxy+yxr^*$. We will just write $\alpha$ and $\beta$, instead of $\alpha(r)$ and $\beta(r)$. So for every $r \in R$, $\alpha=\beta+(xy-yx)r^*$.

Claim: $\alpha, \beta \in S^2K$.
Indeed, obviously $\beta \in K \subseteq S^2K$, since $(\beta)^*=(rxy+yxr^*)*=-yxr^*-rxy=-(rxy+yxr^*)=-(\beta)$.
As for $\alpha$: $\alpha=rxy+xyr^*=(rx+xr^*)y+x(yr^*+ry)-xr^*y-xry=(rx+xr^*)y+x(yr^*+ry)-x(r^*+r)y$.
$(rx+xr^*) \in S$, $(yr^*+ry) \in K$, so $(rx+xr^*)y \in SK \subseteq S^2K$ and $x(yr^*+ry) \in SK \subseteq S^2K$. $-x(r^*+r)y \in SSK=S^2K$.
Hence, $\alpha=rxy+xyr^*=\ldots=(rx+xr^*)y+x(yr^*+ry)-x(r^*+r)y \in S^2K$.
Using the claim we get: for every $r \in R$, $(xy-yx)r^*=-\beta+\alpha \in S^2$. But $R=-R^*$, so for every $r \in R$, $(xy-yx)r \in S^2K$.
Therefore, $(xy-yx)R \subseteq S^2K$.
\eit
\end{proof}

If there exist $x \in S$ and $y \in K$ such that $xy-yx \neq 0$ (in Lemma \ref{first lemma (sk-ks)r in S^2K} we have not demanded this), then $S^2K$ contains a non-zero right ideal of $R$, namely $(xy-yx)R$ where $x \in S$ and $y \in K$ such that $xy-yx \neq 0$ (clearly, $0 \neq (xy-yx)1 \in (xy-yx)R$).

\begin{lem}\label{second lemma u(sk-ks)r in S^2K}
Let $R$ be an associative unital $F$-algebra with $\Char(F)\neq 2$. Let $*$ be an involution on $R$ of the first kind and $S \subsetneq R$. Let $x \in S$ and $y \in K$. Then for every $r,u \in R$, $u(xy-yx)r \in S^2K$. In other words, the two-sided ideal $R(xy-yx)R$ is contained in $S^2K$.
\end{lem}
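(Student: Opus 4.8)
The plan is to follow verbatim the template of the earlier ``second lemmas'', in particular Lemma~\ref{second lemma r(xy-yx)t in S^3} and Lemma~\ref{second lemma r(xy-yx)t in SKS. Again x,y in S}. The companion result just above, Lemma~\ref{first lemma (sk-ks)r in S^2K}, already gives that the \emph{right} ideal $(xy-yx)R$ is contained in $S^2K$, so the only work left is to upgrade this to the two-sided ideal $R(xy-yx)R$. As usual I would first dispose of the trivial case $xy-yx=0$, where $u(xy-yx)r=0\in S^2K$, and then treat $xy-yx\neq 0$.

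The heart of the argument is the following Claim: for every $u\in R$ and every $w\in S^2K$ one has $wu+u^*w\in S^2K$. By additivity it suffices to verify this on a generating product $w=abc$ with $a,b\in S$ and $c\in K$, and for this I would use the purely algebraic identity
\[
abcu+u^*abc=ab(cu+u^*c)+(u^*a+au)bc-a(bu^*+ub)c .
\]
The point is that each newly formed middle factor lands in the correct symmetry class: since $c\in K$ we get $cu+u^*c\in K$, while $a,b\in S$ give $u^*a+au\in S$ and $bu^*+ub\in S$. Hence all three terms on the right lie in $S\cdot S\cdot K=S^2K$, which proves the Claim.

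With the Claim in hand I would set $w=(xy-yx)r$, which lies in $S^2K$ by Lemma~\ref{first lemma (sk-ks)r in S^2K}. The Claim then yields $\bigl((xy-yx)r\bigr)u+u^*(xy-yx)r\in S^2K$ for all $u,r\in R$. Applying Lemma~\ref{first lemma (sk-ks)r in S^2K} once more, now to $ru$, gives $\bigl((xy-yx)r\bigr)u=(xy-yx)(ru)\in S^2K$; subtracting shows $u^*(xy-yx)r\in S^2K$. Finally, because every element of $R$ is of the form $u^*$ (that is, $R=R^*$), this is the same as $u(xy-yx)r\in S^2K$ for all $u,r\in R$, i.e. $R(xy-yx)R\subseteq S^2K$, as required.

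I expect the only genuine decision, and hence the main ``obstacle'', to be the choice of the right combination in the Claim. Because $S^2K$ is not invariant under $*$ (indeed $(S^2K)^*=KS^2$) and is not a Lie ideal, the Lie-type combination $wu-uw$ used in the length-two lemmas (Lemma~\ref{second lemma xsy in S^2 then r(xy-yx)t in S^2} and Lemma~\ref{second lemma SK+KS=R}) is unavailable here. Instead one must use $wu+u^*w$, exactly as in the length-three lemmas for $S^3$ and $SKS$; it works precisely because the single skew slot $c\in K$ is matched by $cu+u^*c\in K$, while the two symmetric slots are matched by $u^*a+au,\ bu^*+ub\in S$. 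Once this combination is identified, every remaining step is the routine bookkeeping already carried out in the proofs above.
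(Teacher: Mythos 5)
Your proposal is correct and follows essentially the same route as the paper's own proof: the identical identity $abcu+u^*abc=ab(cu+u^*c)+(u^*a+au)bc-a(bu^*+ub)c$, the same claim that $wu+u^*w\in S^2K$ for $w\in S^2K$, and the same two applications of Lemma \ref{first lemma (sk-ks)r in S^2K} followed by $R=R^*$. Your closing remark about why the combination $wu+u^*w$ (rather than $wu-uw$) is forced here is a nice observation, but the argument itself matches the paper's.
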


\begin{proof}
Let $x \in S$ and $y \in K$.
There are two options: \bit
\item $xy-yx=0$: Then, trivially, for every $r,u \in R$ $u(xy-yx)r=0 \in S^2K$.
\item $xy-yx\neq 0$:

Claim: for every $u \in R$ and $w \in S^2K$, $wu+u^*w \in S^2K$.
It is enough to show that for every $a,b \in S$ and $c \in K$, $abcu+u^*abc \in S^2K$ (indeed, if for every $a_i,b_i \in S$ and $c_i \in K$, $a_ib_ic_iu+u^*a_ib_ic_i \in S^2K$, then taking $S^2K \ni w=\sum a_ib_ic_i$, we get
$wu+u^*w=(\sum a_ib_ic_i)u+u^*(\sum a_ib_ic_i)=\sum (a_ib_ic_iu+u^*a_ib_ic_i)$, so since each
$a_ib_ic_iu+u^*a_ib_ic_i \in S^2K$, we get $wu+u^*w=\ldots=\sum (a_ib_ic_iu+u^*a_ib_ic_i) \in S^2K$).
Now, $abcu+u^*abc=ab(cu+u^*c)+(u^*a+au)bc-abu^*c-aubc=ab(cu+u^*c)+(u^*a+au)bc-a(bu^*+ub)c$.
Clearly, $cu+u^*c \in K$, $u^*a+au \in S$ and $bu^*+ub \in S$, concluding that $abcu+u^*abc=\ldots=ab(cu+u^*c)+(u^*a+au)bc-a(bu^*+ub)c \in S^2K$.
So we have proved the claim that for every $u \in R$ and $w \in S^2K$, $wu+u^*w \in S^2K$.

Next, the above lemma, Lemma \ref{first lemma (sk-ks)r in S^2K}, says that for every $r \in R$, $(xy-yx)r \in S^2K$,
so from the claim just seen (with $w=(xy-yx)r$), for every $u \in R$ and for every $r \in R$,
$((xy-yx)r)u+u^*(xy-yx)r \in S^2K$.
Use again the above lemma to get that $((xy-yx)r)u=(xy-yx)ru \in S^2K$.
Therefore, for every $u \in R$ and for every $r \in R$, $u^*(xy-yx)r in S^2K$. Obviously, since $R=R^*$ we get that
for every $u \in R$ and for every $r \in R$, $u(xy-yx)r in S^2K$. So $R(xy-yx)R \subseteq S^2K$.
\eit
\end{proof}

If there exist $x \in S$ and $y \in K$ such that $xy-yx \neq 0$ (in Lemma \ref{second lemma u(sk-ks)r in S^2K} we have not demanded this), then $S^2K$ contains a non-zero two-sided ideal of $R$, namely $R(xy-yx)R$ where $x \in S$ and $y \in K$ such that $xy-yx \neq 0$ (clearly, $0 \neq 1(xy-yx)1 \in R(xy-yx)R$).

We proceed to a theorem similar to Theorem \ref{two sided in S^3}.

\begin{thm}\label{two sided in S^2K}
Let $R$ be an associative unital $F$-algebra with $\Char(F)\neq 2$. Let $*$ be an involution on $R$ of the first kind and $S \subsetneq R$. Assume that there exist $x \in S$ and $y \in K$ such that $xy-yx \neq 0$. Then there exists a non-zero two-sided ideal of $R$ which is contained in $S^2K$.
\end{thm}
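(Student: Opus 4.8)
The plan is to deduce this theorem immediately from the two lemmas that precede it, in exactly the same way that Theorem \ref{two sided in S^3} was deduced from Lemma \ref{second lemma r(xy-yx)t in S^3}. The entire substance of the argument has already been carried out in Lemma \ref{first lemma (sk-ks)r in S^2K} and Lemma \ref{second lemma u(sk-ks)r in S^2K}; what remains for the theorem itself is only to assemble a concrete non-zero two-sided ideal and invoke the containment established there.

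First I would use the hypothesis to fix elements $x \in S$ and $y \in K$ with $xy - yx \neq 0$. This is precisely the input that the two lemmas permit but do not themselves require: they prove the containment $(xy-yx)R \subseteq S^2K$ and $R(xy-yx)R \subseteq S^2K$ for \emph{any} $x \in S$, $y \in K$, and the hypothesis is exactly what guarantees the resulting ideal is genuinely non-zero rather than $\{0\}$.

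Next I would form the two-sided ideal $R(xy-yx)R$. It is non-zero because $0 \neq xy - yx = 1\,(xy-yx)\,1 \in R(xy-yx)R$, using that $1 \in Z(R) \subseteq S \subseteq R$. Then Lemma \ref{second lemma u(sk-ks)r in S^2K} gives directly that $u(xy-yx)r \in S^2K$ for all $r,u \in R$, that is, $R(xy-yx)R \subseteq S^2K$. Hence $R(xy-yx)R$ is the desired non-zero two-sided ideal of $R$ contained in $S^2K$, which completes the proof.

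As for where the real difficulty lies, there is essentially no obstacle at the level of this theorem, since it is a one-line corollary of the lemmas. The genuine work sits inside Lemma \ref{second lemma u(sk-ks)r in S^2K}, whose key step is the closure identity $abcu + u^*abc = ab(cu+u^*c) + (u^*a+au)bc - a(bu^*+ub)c$ for $a,b \in S$ and $c \in K$, together with the observations that $cu+u^*c \in K$ while $u^*a+au \in S$ and $bu^*+ub \in S$, which together show that $wu + u^*w \in S^2K$ whenever $w \in S^2K$; combined with $R = R^*$ this upgrades the one-sided containment of Lemma \ref{first lemma (sk-ks)r in S^2K} to the two-sided one used above. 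The only point demanding care is the bookkeeping of which symmetric or skew factors land in $S$ versus $K$, and the correct use of $R = R^*$ to pass from $u^*(xy-yx)r \in S^2K$ to $u(xy-yx)r \in S^2K$.
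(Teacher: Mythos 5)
Your proof is correct and matches the paper's own argument exactly: the paper likewise derives the theorem in one line from Lemma \ref{second lemma u(sk-ks)r in S^2K}, taking $R(xy-yx)R$ as the non-zero two-sided ideal contained in $S^2K$. Your identification of where the substantive work lies (the closure identity in that lemma) is also accurate.
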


\begin{proof}
Follows immediately form Lemma \ref{second lemma u(sk-ks)r in S^2K}, as was explained immediately after it.
\end{proof}

Theorem \ref{two sided in S^2K} immediately implies the following:

\begin{cor}\label{simple S^2K=R}
Let $R$ be a simple associative unital $F$-algebra with $\Char(F)\neq 2$. Let $*$ be an involution on $R$ of the first kind and $S \subsetneq R$. Assume that there exist $x \in S$ and $y \in K$ such that $xy-yx \neq 0$. Then $S^2K=R$.
\end{cor}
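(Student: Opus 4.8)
The plan is to derive the statement as an immediate consequence of Theorem \ref{two sided in S^2K} together with the defining property of a simple ring. Since $S^2K \subseteq R$ is automatic, it suffices to prove the reverse inclusion $R \subseteq S^2K$, and the natural route is to exhibit a two-sided ideal sitting inside $S^2K$ and then force it to be all of $R$.

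Concretely, I would proceed as follows. By hypothesis there exist $x \in S$ and $y \in K$ with $xy - yx \neq 0$. Applying Lemma \ref{second lemma u(sk-ks)r in S^2K} to this pair gives $R(xy-yx)R \subseteq S^2K$. This two-sided ideal is non-zero: taking $u = r = 1$ yields $0 \neq 1(xy-yx)1 = xy - yx \in R(xy-yx)R$. Invoking simplicity of $R$, its only non-zero two-sided ideal is $R$ itself, whence $R(xy-yx)R = R$. Chaining the inclusions we obtain $R = R(xy-yx)R \subseteq S^2K \subseteq R$, and therefore $S^2K = R$, as desired. Equivalently, one may cite Theorem \ref{two sided in S^2K} directly, which already packages the non-zero-ideal assertion, and then apply simplicity in a single line.

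The main obstacle does \emph{not} lie in this final deduction, which is routine once the ideal has been located, but in the preceding Lemma \ref{first lemma (sk-ks)r in S^2K} and Lemma \ref{second lemma u(sk-ks)r in S^2K}: one must verify that the right ideal $(xy-yx)R$, and then the full two-sided ideal $R(xy-yx)R$, genuinely land in $S^2K$. This rests on the Herstein-style rewriting of $rxy + xyr^*$ as a sum of terms each visibly in $S^2K$ (using that $rx+xr^* \in S$, that $yr^*+ry \in K$, and that $x(r^*+r)y \in SSK=S^2K$), together with the closure property $wu + u^*w \in S^2K$ for $w \in S^2K$. A subtle point worth flagging is the role of the existence hypothesis on the pair $x \in S,\, y \in K$ itself: by Lemma \ref{lemma for S^2K and KS^2} this is equivalent, when $dim_{Z(R)}R > 4$, to $S$ being noncommutative, so the present corollary is really a statement about noncommutative $S$ in disguise, even though it is simplicity—and not any dimension bound—that is actually invoked in the proof above.
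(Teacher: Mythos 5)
Your proof is correct and follows exactly the paper's route: the paper's own proof of this corollary is the one-line deduction from Theorem \ref{two sided in S^2K} (equivalently, from Lemma \ref{second lemma u(sk-ks)r in S^2K}) plus simplicity, which is precisely what you spell out. Your closing remark about Lemma \ref{lemma for S^2K and KS^2} is accurate side commentary but plays no role in the argument itself.
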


\begin{proof}
Follows at once from Theorem \ref{two sided in S^2K}.
\end{proof}

One can use Corollary \ref{cor of centS in Z} and Lemma \ref{lemma for S^2K and KS^2} in order to get the following theorem:

\begin{thm}\label{S^2K: simple with dim > 4}
Let $R$ be a simple associative unital $F$-algebra with $\Char(F)\neq 2$. Let $*$ be an involution on $R$ of the first kind such that $Z(R) \subsetneq S$ (and not just $Z(R) \subseteq S$) and $S \subsetneq R$. Assume that $dim_{Z(R)}R > 4$. Then $S^2K=R$.
\end{thm}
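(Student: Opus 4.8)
The plan is to chain together the two results cited immediately before the statement, both of which rest on Herstein's centralizer theorem (Theorem \ref{centS in Z}) and therefore genuinely require the hypothesis $\dim_{Z(R)}R > 4$. First I would exploit the strict inclusion $Z(R) \subsetneq S$ together with Corollary \ref{cor of centS in Z}, which states that, under exactly these hypotheses, $S$ is a commutative set if and only if $Z(R)=S$. Since we are assuming $Z(R) \subsetneq S$, in particular $Z(R) \neq S$, so the contrapositive forces $S$ to be a noncommutative set.

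Next I would feed this noncommutativity into Lemma \ref{lemma for S^2K and KS^2}, which (again under the same simplicity and dimension hypotheses) asserts that $S$ is a noncommutative set if and only if there exist $s \in S$ and $k \in K$ with $sk - ks \neq 0$. Having just established that $S$ is noncommutative, I obtain such a pair; setting $x := s \in S$ and $y := k \in K$, this says precisely that there exist $x \in S$ and $y \in K$ with $xy - yx \neq 0$.

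Finally I would invoke Corollary \ref{simple S^2K=R}, whose sole extra hypothesis is exactly the existence of such $x \in S$ and $y \in K$ with $xy - yx \neq 0$, and whose conclusion is $S^2K = R$; this closes the argument. The step that does the real work --- and the only place where $\dim_{Z(R)}R > 4$ is indispensable --- is the passage from ``$S$ is noncommutative'' to ``some symmetric element fails to commute with some skew element.'' A priori the noncommutativity of $S$ only supplies $x,y \in S$ with $xy \neq yx$, which is not the datum that Corollary \ref{simple S^2K=R} consumes. It is Herstein's theorem $\Cent(S) \subseteq Z(R)$, used inside Lemma \ref{lemma for S^2K and KS^2}, that excludes $K \subseteq \Cent(S)$ and thereby guarantees the required non-commuting $S$--$K$ pair.
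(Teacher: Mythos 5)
Your argument is correct and is essentially identical to the paper's own proof: it passes from $Z(R)\subsetneq S$ to noncommutativity of $S$ via Corollary \ref{cor of centS in Z}, then to a non-commuting pair $x\in S$, $y\in K$ via Lemma \ref{lemma for S^2K and KS^2}, and concludes with Corollary \ref{simple S^2K=R}. Your closing observation that the dimension hypothesis is consumed precisely in the passage from noncommutativity of $S$ to the existence of an $S$--$K$ non-commuting pair accurately locates where Herstein's theorem does the work.
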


\begin{proof}
By assumption, $Z(R) \subsetneq S$, so from Corollary \ref{cor of centS in Z} we get that $S$ is a noncommutative set.
Then Lemma \ref{lemma for S^2K and KS^2} says that there exist $x \in S$ and $y \in K$ such that $xy-yx \neq 0$.
Finally, Corollary \ref{simple S^2K=R} implies that $S^2K=R$.
\end{proof}


In view of Remark \ref{SK+KS not matrix} which showed that with respect to the transpose involution, $KS+SK \subsetneq \M_n(F)$ where $\Char(F) \neq 2$ and $n \geq 2$, it seems not interesting to bring a detailed discussion for the following theorem. It is not difficult to complete all the details from Lemma \ref{first lemma (sk-ks)r in S^2K} and from Lemma \ref{second lemma u(sk-ks)r in S^2K}.

\begin{thm}\label{SK=R}
Let $R$ be a simple associative unital $F$-algebra with $\Char(F)\neq 2$. Let $*$ be an involution on $R$ of the first kind and $S \subsetneq R$. Assume that there exist $x \in S$ and $y \in K$ such that $xy-yx \neq 0$ and $xSy \subseteq SK$ (or more generally, $xTy \subseteq SK$). Then $SK=R$.
\end{thm}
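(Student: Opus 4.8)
The plan is to follow the two-step \emph{ideal-squeezing} strategy used throughout the paper: first show that the right ideal $(xy-yx)R$ sits inside $SK$, then upgrade this to the two-sided ideal $R(xy-yx)R \subseteq SK$, and finally invoke simplicity of $R$.

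For the first step I would imitate the decomposition of Lemma \ref{first lemma (sk-ks)r in S^2K}, but track membership in $SK$ rather than in $S^2K$. For $r \in R$ write $rxy + xyr^* = \beta + (xy-yx)r^*$, where $\alpha = rxy + xyr^*$ and $\beta = rxy + yxr^*$. Using $x \in S$ and $y \in K$ one checks $\beta^* = -\beta$, so $\beta \in K \subseteq SK$. Rewriting $\alpha = (rx+xr^*)y + x(yr^*+ry) - x(r^*+r)y$, the first summand lies in $SK$ because $rx + xr^* \in S$, the second lies in $SK$ because $yr^* + ry \in K$, and the third is $-x(r^*+r)y$ with $r + r^* \in T \subseteq S$, so it lands in $SK$ by the hypothesis $xSy \subseteq SK$ (indeed only $xTy \subseteq SK$ is ever needed). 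Hence $\alpha \in SK$, so $(xy-yx)r^* = \alpha - \beta \in SK$; since $R = R^*$, this yields $(xy-yx)R \subseteq SK$.

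For the second step the crucial point, and the place that genuinely differs from Lemma \ref{second lemma u(sk-ks)r in S^2K}, is to isolate the correct closure property of $SK$. The right property is that $SK$ is a \emph{Lie} ideal: for $w \in SK$ and $u \in R$, $wu - uw \in SK$. It suffices to verify this on $w = ab$ with $a \in S$ and $b \in K$, via the identity $abu - uab = a(bu + u^*b) - (ua + au^*)b$, where $bu + u^*b \in K$ and $ua + au^* \in S$, so both terms lie in $SK$. Combining this with the first step applied to $ru$ in place of $r$, one gets, for all $u,r \in R$, that $u(xy-yx)r = \big((xy-yx)r\big)u - \big(((xy-yx)r)u - u((xy-yx)r)\big) \in SK$, whence $R(xy-yx)R \subseteq SK$.

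Finally, since $xy - yx \neq 0$ the ideal $R(xy-yx)R$ is a nonzero two-sided ideal of the simple ring $R$, hence equals $R$; therefore $SK = R$. The main obstacle is the closure step: one must notice that the additive-involution device $wu + u^*w$ that worked for $S^2K$ fails here (it produces a stray term in $S^2K$, since a product of two symmetric elements need not be symmetric), and that the commutator form $wu - uw$ is the one that keeps everything inside $SK$.
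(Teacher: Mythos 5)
Your proof is correct and is essentially the proof the paper intends: the paper omits the details, referring to Lemmas \ref{first lemma (sk-ks)r in S^2K} and \ref{second lemma u(sk-ks)r in S^2K}, and your two-step ideal-squeezing argument is the natural completion. Your one genuine observation --- that the $wu+u^{*}w$ closure device of the second $S^{2}K$ lemma must be replaced by the Lie-commutator device $wu-uw$ to stay inside $SK$ --- is exactly the adaptation the paper itself makes for the analogous set $KS$ in part (3) of Theorem \ref{R simple two options for R=KS+K^2}, so you are on the paper's intended route.
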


Finally, only a minor change in the discussion about $S^2K$ (and $SK$) is needed in order to yield analog results for $KS^2$ (and $KS$); Just take now $x \in K$ and $y \in S$ (instead of $x \in S$ and $y \in K$).

\bibliographystyle{plain}

\end{document}